\documentclass[11pt]{article}
\usepackage[a4paper,margin=1in]{geometry}
\usepackage[T1]{fontenc}
\usepackage[utf8]{inputenc}
\usepackage{lmodern}
\usepackage{amsmath,amssymb,amsthm,mathtools,bm}
\usepackage{booktabs,array,microtype}
\usepackage{enumitem}
\usepackage[hidelinks]{hyperref}
\usepackage[nameinlink,capitalize]{cleveref}

\newtheorem{theorem}{Theorem}[section]
\newtheorem{proposition}[theorem]{Proposition}
\newtheorem{lemma}[theorem]{Lemma}
\newtheorem{corollary}[theorem]{Corollary}
\theoremstyle{definition}
\theoremstyle{definition}
\newtheorem{definition}[theorem]{Definition}
\theoremstyle{remark}
\newtheorem{remark}[theorem]{Remark}
\newtheorem{warning}[theorem]{Warning}

\newcommand{\R}{\mathbb R}
\newcommand{\dd}{\,d}

\title{\textbf{Endpoint Structure in Three-Dimensional Navier--Stokes}}
\author{Rishad Shahmurov\\\small Cellular Products Research and Development, Roswell, GA, USA}
\date{September 2026}

\begin{document}
\maketitle

\begin{abstract}
We study possible singular endpoints of the three-dimensional incompressible Navier--Stokes equations by parabolic rescaling.  Weak limits can lose information through pressure, nonlinear products, concentration, spatial tails, or symmetry, so we keep these quantities as part of one augmented endpoint state and use a fixed whole-space pressure representative throughout the shrinking chain.  We prove compact endpoint realization, nonoverlapping assignment of positive defects, two-generation inheritance, rotational alternatives, and a high-critical kinetic normalization centered at the original singular point.  We show that positive rotational distance alone does not coerce a productive rotational Reynolds stress, and we give an explicit conditional compactness theorem under which frequency-tight amplitude-fast records pass strongly to a nonzero ancient Euler profile.  We then study a possible Euler limit that is invisible under positive heat evolution.  Exact heat-orbit nullity forces cancellation separately on each interaction-energy shell and isotropic covariance on every Fourier sphere; independently, any nonzero $L^2$ field normalized along its long heat orbit loses all mass from every fixed spatial ball.  Frequency anti-concentration, the accumulating heat-chain passage, and retention of a singular witness remain open; no unconditional regularity theorem is claimed.
\end{abstract}

\section{Introduction}

The regularity problem for the three-dimensional incompressible
Navier--Stokes equations is often approached by assuming that a first
singularity occurs and then magnifying the solution near that point.  The
parabolic scaling of the equations makes this natural: a shrinking physical
region can be rescaled to a fixed cylinder, and one may then try to extract a
nontrivial limiting flow.  This viewpoint goes back to the classical weak
solution theory of Leray \cite{Leray1934} and is central to the partial
regularity theory of Caffarelli, Kohn, and Nirenberg \cite{CKN1982}.  In many
modern arguments the limiting object is ancient or terminal, and one seeks to
show that its structure is incompatible with the existence of the original
singularity; Type-I compactness and Liouville methods provide an important
example of this strategy \cite{AlbrittonBarker2019}.

The difficulty is that taking a weak limit does not automatically preserve all
of the information that was important before rescaling.  The velocity may
converge while part of a nonlinear product is lost.  Energy may escape through
a spatial collar or a time face.  Pressure is nonlocal, so a local pressure
decomposition can change when the cylinder changes.  Directional information
may oscillate even when the magnitude remains controlled.  A sequence can also
concentrate in amplitude or drift toward a different spatial scale.  If these
possibilities are not recorded explicitly, a compactness argument can appear
to close simply because the missing information has disappeared from the
notation.

The purpose of this paper is to make that loss of information visible.  We do
not try to prove that every possible Navier--Stokes singularity has one
particular geometric form.  Instead, we construct a selected endpoint
framework in which the quantities needed by later arguments have a definite
place to go.  The basic principle is simple: the limiting velocity and pressure
are retained together with the defects, tails, concentration measures, and
finite-dimensional quotient data that may survive the limiting process.  The
result is an endpoint state that is larger than the usual pair \((u,p)\), but
small enough that its components can still be controlled by standard
compactness and measure arguments.

A first point of emphasis is pressure.  In whole space the pressure is
recovered from the velocity by the Riesz transforms, up to an irrelevant
function of time.  We therefore fix one global representative
\[
  p^\sharp=\mathcal R_i\mathcal R_j(u_i u_j)
\]
throughout the entire shrinking chain.  Local near/far pressure decompositions
are always taken from this same representative.  This prevents a different
additive normalization from being introduced at every scale.  Such a choice is
especially important when pressure contributions are to be compared or
assigned along a nested sequence.  Our use of a fixed representative is
compatible with the local pressure expansion viewpoint developed, for example,
by Bradshaw and Tsai \cite{BradshawTsai2022}.

A second point is the distinction between positive quantities and signed
currents.  Local energy defects and concentration measures can be treated as
nonnegative Radon measures.  They may therefore be assigned to the first scale
at which they appear, which prevents the same positive quantity from being
charged repeatedly along a nested chain.  Signed fluxes and pressure-triad
currents behave differently: cancellation is part of their meaning, so they
are retained as signed distributions or currents rather than inserted into a
positive budget.  This separation is elementary, but it is essential for the
quantitative statements later in the paper.

The first group of results establishes the compact endpoint itself.  Under the
usual scale-invariant local energy, dissipation, and pressure bounds, together
with tightness of the explicitly retained defect coordinates, a normalized
sequence has a subsequence converging to a terminal endpoint state.  The
velocity converges strongly below the natural exponent, the near pressure
converges locally after the fixed normalization, and all weak-product,
concentration, tail, and quotient losses are recorded separately.  We then
show that genuinely nonnegative quantities admit a first-assignment rule and a
corresponding quantization estimate.  These statements are designed to answer
a basic bookkeeping question: when a positive amount of critical information
survives at many scales, which part is actually new and which part has already
been counted?

The second group of results concerns inheritance and detection.  We retain two
successive generations of the selected endpoint state and use a fixed family of
five trace-free symmetric receivers to detect the full local current.  The
five-frame device is intentionally separated from the two-column Gram matrix:
the former detects the current itself, whereas the latter is only a conditioning
tool for comparing two inherited observations.  This distinction avoids
confusing failure of a particular two-dimensional test with disappearance of
the underlying current.

The final part of the paper studies what the endpoint can look like when the
critical quantity is either bounded or unbounded.  On bounded critical
sequences, compactness leads to a rotational alternative: one may approach an
axisymmetric class, retain a genuinely non-axisymmetric compact state, or reach
a nonsmooth terminal state with a positive gap from the relevant rotational
Killing fields.  The connection with rotational geometry is consistent with
the broader role of vorticity direction in regularity theory, going back in
particular to Constantin and Fefferman \cite{ConstantinFefferman1993}.  On the
unbounded branch, we prove a different alternative: either exterior stress or
tail information becomes non-negligible, or one can normalize at the original
singular point to obtain an amplitude-fast kinetic record.  The spatial mark is
therefore retained rather than being lost by recentering.

We also isolate what would happen after a further strong passage to an Euler
profile whose positive heat orbit remains stationary.  In Fourier variables the
heat weight separates the quadratic interaction by the value of
\(|\eta|^2+|\xi-\eta|^2\).  Vanishing for an interval of heat times therefore
forces cancellation on each such interaction shell separately; a small-output
limit then gives an isotropy law on every Fourier sphere.  A complementary
spectral theorem shows that, for any nonzero whole-space \(L^2\) field, the
long-time heat orbit normalized in \(L^2\) leaves every fixed physical ball.
Thus a heat-orbit-null endpoint cannot remain both nontrivial and spatially
tight under arbitrarily large heat ages.  This conclusion is conditional when
applied to the Navier--Stokes record: the strong nonlinear compactness and
accumulating heat-chain passage needed to reach that Euler class are not proved
here.

These results deliberately stop before a regularity theorem.  The bounded
branch still contains a nonsmooth positive-Killing-gap terminal state that is
not excluded here; an explicit local rotational-stress example below shows why
positive distance from the axisymmetric subspace cannot by itself supply a
productive mean forcing.  On the high-critical branch, the amplitude-fast
record does not by itself prevent spatial or frequency concentration.  We prove
that localized frequency tightness together with a weak time-compactness bound
is sufficient for strong local passage to an ancient Euler profile, but the
paper does not establish those hypotheses from the singularity record and does
not prove that a singular or deformation witness survives in the limit.  Thus
the paper should be read as an endpoint-structure and compactness result: it
narrows and organizes the possible terminal behavior, but it does not claim a
universal reduction to axisymmetric flow or an unconditional solution of the
three-dimensional regularity problem.

The paper is organized as follows.  We first fix the pressure normalization and
the selected scales, then define the augmented endpoint package and prove its
compact realization.  We next develop first assignment, quantization, and the
chainwise positive ledger, followed by two-generation inheritance and fixed-frame
current detection.  The last sections treat rotational compactness and high-critical
records, then derive the heat-orbit shellwise rigidity and normalized
spectral-recession theorems, before collecting the remaining open problems and
pressure/topology conventions.

\section{Pressure normalization and selected scales}

Consider a smooth finite-energy solution before a hypothetical first singular
time \(T_*\), or a suitable weak limit obtained from such a sequence.  In
whole space we fix the physical pressure representative once and for all by
\[
  p^\sharp(t)=\mathcal R_i\mathcal R_j(u_i u_j)(t)
\]
in the distributional sense, with the purely time-dependent gauge fixed
globally.  Every local pressure decomposition is taken from this same
representative.

\medskip
\noindent\textbf{Root-gauge parent--descendant pressure identity.}
\begin{proposition}
Let \(\eta_Q\) be a nested family of local cutoffs and put
\[
  p_Q^{\rm near}=\mathcal R_i\mathcal R_j(\eta_Q u_i u_j),
  \qquad
  h_Q=p^\sharp-p_Q^{\rm near}.
\]
On a descendant \(Q'\Subset Q\) on which both cutoffs are valid,
\[
  h_{Q'}-h_Q
  =
  \mathcal R_i\mathcal R_j\bigl((\eta_Q-\eta_{Q'})u_i u_j\bigr).
\]
After subtracting a fixed finite-dimensional harmonic jet, the only additional
term is the corresponding finite-dimensional quotient difference.
\end{proposition}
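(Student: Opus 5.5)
The identity is a direct consequence of linearity of $\mathcal R_i\mathcal R_j$ once both far parts are written against the single global representative $p^\sharp$. By definition, $h_{Q'}-h_Q=(p^\sharp-p^{\rm near}_{Q'})-(p^\sharp-p^{\rm near}_Q)=p^{\rm near}_Q-p^{\rm near}_{Q'}$. The root-gauge choice matters here: $p^\sharp$ carries one globally fixed time-dependent constant, so that constant cancels exactly in the difference. No scale-dependent additive normalization survives.

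Next I apply linearity. This requires $\eta_Q u_iu_j$ and $\eta_{Q'}u_iu_j$ to lie in a class on which $\mathcal R_i\mathcal R_j$ acts, e.g.\ $L^1_tL^{3/2}_x$ or $L^{p}_x$ with $1<p<\infty$ for smooth or suitable weak solutions, since the cutoffs are compactly supported and $u\in L^\infty_tL^2_x\cap L^2_t\dot H^1_x$. Then
\[
p^{\rm near}_Q-p^{\rm near}_{Q'}=\mathcal R_i\mathcal R_j\bigl((\eta_Q-\eta_{Q'})u_iu_j\bigr),
\]
which is the displayed formula. The nesting hypothesis $Q'\Subset Q$ means that "both cutoffs valid" gives $\eta_Q\equiv\eta_{Q'}\equiv 1$ near $Q'$. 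Hence $\eta_Q-\eta_{Q'}$ vanishes on a neighborhood of $Q'$, and the right-hand side is harmonic in $x$ on $Q'$, since $\Delta\mathcal R_i\mathcal R_j f=-\partial_i\partial_j f$ vanishes off $\operatorname{supp} f$. The difference of far parts is therefore a genuine far-field (harmonic) correction on the descendant, which is the content one wants.

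For the jet statement, let $J_Q$ denote the fixed finite-dimensional projection onto harmonic polynomials of degree $\le k$ centered at the cylinder (the subtracted harmonic jet). Set $\tilde h_Q=h_Q-J_Qh_Q$ and likewise for $Q'$. Then
\[
\tilde h_{Q'}-\tilde h_Q=\mathcal R_i\mathcal R_j\bigl((\eta_Q-\eta_{Q'})u_iu_j\bigr)-\bigl(J_{Q'}h_{Q'}-J_Qh_Q\bigr),
\]
and the last bracket is the "finite-dimensional quotient difference". It lies in the finite-dimensional space of degree-$\le k$ harmonic polynomials, because both projections land there.

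The only real obstacle is well-definedness. The jet $J_Q h_Q$ must make sense, which requires $h_Q$ to be harmonic (hence smooth) on a neighborhood of the cylinder's center. This holds because $p^\sharp-p^{\rm near}_Q$ solves $\Delta h_Q=-\partial_i\partial_j((1-\eta_Q)u_iu_j)=0$ where $\eta_Q=1$. I would also check that the Riesz transforms are applied at a time-integrable level, so the pointwise-in-time identity holds for a.e.\ $t$.
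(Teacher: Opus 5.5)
Your proposal is correct and follows the paper's argument. You subtract the two decompositions against the single root representative \(p^\sharp\), use linearity of \(\mathcal R_i\mathcal R_j\), and note that the jet-subtracted difference changes only by \(\Pi_Qh_Q-\Pi_{Q'}h_{Q'}\), which lies in the fixed finite-dimensional quotient space; your extra checks, namely well-definedness of the Riesz transforms on the localized products and harmonicity of \(h_Q\) where \(\eta_Q\equiv1\) (which justifies taking jets), are sound refinements that the paper leaves implicit.
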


\begin{proof}
We give the argument at the level of distributions, because that is the
natural setting in which the whole-space pressure is fixed.  Denote by
\(
\mathcal P(f):=\mathcal R_i\mathcal R_j(f_{ij})
\)
the double Riesz transform applied to a matrix field.  By definition,
\[
 p_Q^{\rm near}=\mathcal P(\eta_Q u\otimes u),
 \qquad
 h_Q=p^\sharp-\mathcal P(\eta_Q u\otimes u),
\]
and the same formulas hold with \(Q\) replaced by \(Q'\).  Since the
whole-space representative \(p^\sharp\) is the same in both decompositions,
we may subtract the two identities without introducing any time-dependent
pressure gauge.  This gives
\[
\begin{aligned}
 h_{Q'}-h_Q
 &=\Bigl(p^\sharp-\mathcal P(\eta_{Q'}u\otimes u)\Bigr)
   -\Bigl(p^\sharp-\mathcal P(\eta_Qu\otimes u)\Bigr)\\
 &=\mathcal P\bigl((\eta_Q-\eta_{Q'})u\otimes u\bigr).
\end{aligned}
\]
This is exactly the asserted identity.  Notice that the right-hand side is
supported, before application of the nonlocal Riesz operator, where the two
cutoffs differ.  Thus the parent--descendant pressure mismatch is determined
by the physical velocity together with the already chosen cutoffs; there is
no additional local pressure datum.

We now include the finite-dimensional quotient used later in the endpoint
package.  Let \(\Pi_Q h_Q\) and \(\Pi_{Q'}h_{Q'}\) denote the prescribed
harmonic jets, or more generally the prescribed finite-dimensional quotient
coordinates, removed from the two harmonic remainders.  Then
\[
\begin{aligned}
 &(h_{Q'}-\Pi_{Q'}h_{Q'})-(h_Q-\Pi_Qh_Q)\\
 &\qquad=
 \mathcal P\bigl((\eta_Q-\eta_{Q'})u\otimes u\bigr)
 +\Pi_Qh_Q-\Pi_{Q'}h_{Q'}.
\end{aligned}
\]
The last two terms belong to the fixed finite-dimensional quotient space.
Consequently quotient subtraction changes only the recorded quotient
coordinate.  It does not generate a new pressure source or alter the root
gauge.  This proves both assertions.
\end{proof}

The fixed root gauge is necessary for the following reason.
\begin{remark}
A density of the form
\[
  |p-c_Q(t)|^{3/2}\,\dd x\,\dd t
\]
does not define one Radon measure on the root cylinder if the normalizing
function \(c_Q(t)\) depends on the selected cylinder \(Q\).  Scale-dependent
constants are legitimate local quotient coordinates, but they cannot be used
inside a single additive first-allocation budget.  All Radon budgeting below uses
the fixed representative \(p^\sharp\).
\end{remark}

\section{The augmented endpoint package}

\medskip
\noindent\textbf{Selected endpoint package.}
\begin{definition}
On a normalized parabolic chamber the endpoint package is
\[
\boxed{
\mathcal E_{\rm NS}
=
\bigl(
 U,P^\sharp,\mathcal R,\mathcal E_{\rm loc},
 \mathcal D_{\rm flux},\mathcal P_{\rm quot},
 \mathcal T_{\rm tail/time},\mathcal O_{\rm own},
 \mathcal H_{\rm hol},\mathcal J_{\rm triad},
 \nu_\xi,\mu_{\rm conc},\mu_{3/2}
\bigr).
}
\]
Here:
\begin{enumerate}[label=(\roman*),leftmargin=*]
\item \((U,P^\sharp)\) is a suitable ancient or terminal weak limit in the fixed
      pressure gauge;
\item \(\mathcal R\) is the Reynolds/subgrid stress generated by weak products;
\item \(\mathcal E_{\rm loc}\ge0\) is the local-energy defect measure;
\item \(\mathcal D_{\rm flux}\) is the retained scale-localized energy-flux
      distribution (including any Duchon--Robert type commutator);
\item \(\mathcal P_{\rm quot}\) records finite-dimensional harmonic/affine
      pressure quotient data;
\item \(\mathcal T_{\rm tail/time}\) records spatial-tail and time-face loss;
\item \(\mathcal O_{\rm own}\) records first assignment of nonnegative currencies;
\item \(\mathcal H_{\rm hol}\) records the fixed-frame transition/holonomy
      discrepancy when it is not already a quotient or cutoff term;
\item \(\mathcal J_{\rm triad}\) is the pressure-projected local-triad current;
\item \(\nu_\xi\) is the vorticity-direction Young measure;
\item \(\mu_{\rm conc}\) is the amplitude/concentration measure;
\item \(\mu_{3/2}\) is the critical threshold-dissipation measure supplied by
      the companion paper.
\end{enumerate}
\end{definition}

\begin{warning}[Nonnegative budgets versus signed currents]
Only genuinely nonnegative Radon measures are placed in a finite first-assigned
budget.  The local-energy defect and Reynolds trace are nonnegative in their
standard defect formulations.  A Duchon--Robert commutator or a
pressure-projected triad current is retained as a distribution or
measure-valued current unless a separate theorem supplies positivity.  The
notation must not identify a signed flux current with the nonnegative
local-energy defect.
\end{warning}

\section{Faithful endpoint realization}

\medskip
\noindent\textbf{Compact endpoint realization with retained defect data.}
\begin{theorem}
Let \((u_n,p_n^\sharp)\) be a normalized sequence of suitable weak
Navier--Stokes solutions on a fixed parabolic chamber.  Assume uniform
scale-invariant local energy and dissipation bounds, local
\(L^{3/2}\)-pressure bounds in the fixed root gauge, local boundedness of the
critical threshold measures, and tightness/boundedness of the explicitly
retained nonnegative defect coordinates.  Then after a subsequence the
sequence generates an endpoint package \(\mathcal E_{\rm NS}\).  Moreover:
\begin{enumerate}[label=(\alph*),leftmargin=*]
\item \(u_n\to U\) strongly in \(L^q_{\rm loc}\) for every
      \(q<10/3\), after the standard suitable-solution compactness step;
\item the localized near pressure converges strongly in \(L^r_{\rm loc}\) for every
      \(1<r<5/3\), while harmonic/tail parts are retained separately;
\item nonnegative defect measures are weak-* compact and lower semicontinuous;
\item weak nonlinear-product loss is represented by \(\mathcal R\);
\item direction oscillation and amplitude concentration are represented by
      \(\nu_\xi\) and \(\mu_{\rm conc}\);
\item the fixed pressure projection may be applied before passage to the limit,
      so \(\mathcal J_{\rm triad}\) is retained distributionally.
\end{enumerate}
\end{theorem}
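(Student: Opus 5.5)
The plan is to run the standard suitable-weak-solution compactness argument (as in Caffarelli--Kohn--Nirenberg and the Type-I blow-up literature) and then attach each retained coordinate of \(\mathcal E_{\rm NS}\) by a separate Banach--Alaoglu or Young-measure extraction along a common diagonal subsequence.

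\textbf{Velocity (a).} The uniform bounds
\[
\sup_t\int_{B}|u_n|^2+\iint|\nabla u_n|^2\le C
\]
give boundedness in \(L^\infty_tL^2_x\cap L^2_tH^1_x\), hence in \(L^{10/3}_{\rm loc}\) by interpolation. Testing the equation against divergence-free test functions, and using the pressure bound, bounds \(\partial_tu_n\) in \(L^{1}_tH^{-s}_{\rm loc}\) (or \(L^{4/3}_tW^{-1,3/2}\)). Aubin--Lions then gives strong \(L^2_{\rm loc}\) convergence, and interpolation with the uniform \(L^{10/3}\) bound yields strong \(L^q_{\rm loc}\) convergence for every \(q<10/3\).

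\textbf{Near pressure (b).} Write \(p^{\rm near}_{n,Q}=\mathcal R_i\mathcal R_j(\eta_Q u_{n,i}u_{n,j})\) in the fixed root gauge of the preceding Proposition. Since \(\eta_Qu_n\otimes u_n\to\eta_Q U\otimes U\) strongly in \(L^r\) for \(r<5/3\), the \(L^r\)-boundedness of the Riesz transforms (\(1<r<\infty\)) gives strong convergence of the near pressure. The harmonic remainder \(h_{n,Q}\) is bounded in \(L^{3/2}\) by the hypotheses. Interior harmonic estimates give local smooth compactness in space, but only weak compactness in time, so the finite-dimensional jet is recorded in \(\mathcal P_{\rm quot}\) and any non-convergent far part in \(\mathcal T_{\rm tail/time}\). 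By the Proposition, the fixed gauge makes these quotient coordinates well defined across nested cutoffs.

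\textbf{Measures and products (c)--(f).} The nonnegative coordinates \(\mathcal E_{\rm loc}\), \(\mu_{3/2}\), \(\mu_{\rm conc}\) and \(\mathcal O_{\rm own}\) are locally bounded and tight by hypothesis. Banach--Alaoglu gives weak-* limits, and the portmanteau theorem gives lower semicontinuity on open sets. The local energy inequality passes to the limit with the defect \(\mathcal E_{\rm loc}\) defined as the nonnegative gap, via weak lower semicontinuity of \(|\nabla u_n|^2\). The products \(u_n\otimes u_n\) and \(\omega_n\otimes\omega_n\) are bounded in \(L^1_{\rm loc}\) and have measure-valued weak-* limits, and we set \(\mathcal R\) equal to the weak limit minus the product of the limits. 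For the velocity part, \(\mathcal R\) vanishes by (a); it survives at the vorticity and flux levels, which feed \(\mathcal D_{\rm flux}\). The direction field \(\xi_n=\omega_n/|\omega_n|\) takes values in \(S^2\), so the fundamental theorem of Young measures, applied with weight \(|\omega_n|\,dx\,dt\), produces \(\nu_\xi\) and \(\mu_{\rm conc}\). For (f), \(\mathcal P\) is linear and continuous on distributions of the relevant class. Commuting it with the cutoff identity of the Proposition and then taking weak limits gives \(\mathcal J_{\rm triad}\) as a distributional limit. No positivity is claimed, consistent with the Warning.

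\textbf{Main obstacle.} The delicate step is (b) together with the time-compactness of the pressure quotient. The harmonic part is only \(L^{3/2}\) in time, so it may oscillate, and one must check that subtracting the jet \(\Pi_Q\) commutes with the limit and with changes of chamber. I would handle this by fixing the jet as a continuous linear projection on \(L^{3/2}(B)\cap\{\text{harmonic}\}\). Weak limits of the jet coordinates are then weak limits in a finite-dimensional \(L^{3/2}_t\) space, and the remaining harmonic part converges strongly in space uniformly, up to the recorded tail. The remaining delicacy is diagonalization over countably many chambers and coordinates, which is routine.
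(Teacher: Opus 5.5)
Your proposal is correct and follows essentially the same route as the paper. The main steps match:
\begin{itemize}
\item For (a): $L^\infty_tL^2_x\cap L^2_tH^1_x$ bounds with interpolation to $L^{10/3}$, a negative-Sobolev bound on $\partial_t u_n$, then Aubin--Lions and interpolation.
\item For (b): $L^r$-boundedness of the Riesz transforms applied to the strongly converging localized product $\eta\, u_n\otimes u_n$.
\item For (c)--(e): Banach--Alaoglu, defects defined as ``weak limit minus the product of the limits,'' and Young measures.
\item For (f): diagonal distributional extraction of the signed current.
\end{itemize}

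One caution concerns your aside that the harmonic remainder, after the jet is removed, converges strongly in space. This is not needed for the statement. It also fails in general without time compactness: take $\sin(nt)$ times any harmonic polynomial outside the retained jet. As the paper does, that part should simply be retained in $\mathcal P_{\rm quot}$ and $\mathcal T_{\rm tail/time}$ rather than claimed to converge.
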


\begin{proof}
We divide the extraction into the velocity, pressure, measure, and signed-current
coordinates.  All compactness statements below are first proved on a strict
subchamber \(Q'\Subset Q\) and are then diagonalized over a countable exhaustion
of the normalized chamber.

\emph{Step 1: energy bounds and the subcritical spacetime exponent.}
The assumed local energy and dissipation bounds give, uniformly in \(n\),
\[
 u_n\quad\hbox{bounded in}\quad
 L^\infty(I';L^2(B'))\cap L^2(I';H^1(B'))
\]
for every \(Q'=B'\times I'\Subset Q\).  The three-dimensional Sobolev
embedding \(H^1(B')\hookrightarrow L^6(B')\), combined with interpolation in
space and time, yields the standard estimate
\[
 \|u_n\|_{L^{10/3}(Q')}
 \le C_{Q'}
 \|u_n\|_{L^\infty_tL^2_x(Q')}^{2/5}
 \|u_n\|_{L^2_tH^1_x(Q')}^{3/5}.
\]
Thus \((u_n)\) is uniformly bounded at the natural suitable-weak exponent
\(10/3\).

\emph{Step 2: a time-derivative bound.}
Choose a cutoff \(\chi\in C_c^\infty(Q)\) that is one on \(Q'\).  On the support
of \(\chi\), the equation can be written schematically as
\[
 \partial_t u_n
 =\nu\Delta u_n-\mathbb P\nabla\!\cdot(u_n\otimes u_n),
\]
where the Leray projection is harmless in negative Sobolev spaces.  The first
term is bounded in \(L^2(I';H^{-1}(B'))\).  Since
\(u_n\in L^{10/3}(Q')\), one has
\(u_n\otimes u_n\in L^{5/3}(Q')\), and therefore the nonlinear term is bounded
in a space of the form
\(L^{5/3}(I';W^{-1,5/3}(B'))\).  After lowering exponents if necessary, both
terms are bounded in one reflexive negative Sobolev space
\[
 \partial_tu_n\quad\hbox{bounded in}\quad
 L^\sigma(I';W^{-1,m}(B'))
\]
for some fixed \(\sigma>1\) and \(m>1\).  This is the compactness input needed
for the Aubin--Lions lemma.

\emph{Step 3: strong local velocity convergence.}
Aubin--Lions applied to
\(H^1(B')\Subset L^2(B')\hookrightarrow W^{-1,m}(B')\) gives, after passage to
a subsequence,
\[
 u_n\to U\qquad\hbox{strongly in }L^2(Q').
\]
The uniform \(L^{10/3}\) bound and interpolation then imply
\[
 u_n\to U\qquad\hbox{strongly in }L^q(Q')
 \quad\text{for every }1\le q<10/3.
\]
In particular, if \(2<q<10/3\), then
\[
 u_n\otimes u_n\to U\otimes U
 \qquad\hbox{strongly in }L^{q/2}(Q').
\]
This observation is useful because it identifies precisely which quadratic
velocity products are already compact and which additional nonlinear
coordinates must be retained only for weaker limiting procedures.

\emph{Step 4: the near pressure.}
Fix a spatial cutoff \(\eta\) supported strictly inside the root chamber and
identically one on the spatial projection of \(Q'\).  The near pressure is
\[
 p_n^{\rm near}=\mathcal R_i\mathcal R_j(\eta u_{n,i}u_{n,j}).
\]
Choose any \(1<r<5/3\) and set \(q=2r\); then \(2<q<10/3\).  By Step 3 the
localized product converges strongly in \(L^r\).  The Riesz transforms are
bounded on \(L^r(\R^3)\) for \(1<r<\infty\), hence
\[
 p_n^{\rm near}\to
 \mathcal R_i\mathcal R_j(\eta U_iU_j)
 \qquad\hbox{strongly in }L^r(Q').
\]
The portion of \(p_n^\sharp\) not contained in this near field is harmonic on
the smaller spatial region, up to the explicitly retained tail and time-face
contributions.  Those pieces are therefore kept as separate endpoint
coordinates rather than being absorbed into the near-pressure convergence.
The use of the fixed root representative is essential here: no scale-dependent
additive pressure constant is introduced during extraction.

\emph{Step 5: nonnegative defect measures.}
Let \((\mu_n)\) denote any of the nonnegative Radon coordinates for which the
hypotheses give a uniform mass bound on compact subsets.  By the Riesz
representation theorem and Banach--Alaoglu, the unit ball of finite Radon
measures is weak-* compact on every compact subchamber.  Hence, after a
subsequence,
\[
 \mu_n\stackrel{*}{\rightharpoonup}\mu.
\]
For every nonnegative \(\varphi\in C_c\),
\[
 \int\varphi\,d\mu
 \le \liminf_{n\to\infty}\int\varphi\,d\mu_n
\]
whenever the coordinate arises from a lower-semicontinuous convex energy or
dissipation density.  This gives the required lower semicontinuity for the
local-energy, concentration, and critical-threshold coordinates.

\emph{Step 6: Reynolds/product defects and generalized oscillation data.}
Whenever a nonlinear quantity is already covered by the strong convergence in
Step 3, its defect is zero on \(Q'\).  For any additional bounded nonlinear
observable for which only weak convergence is available, we define its defect
as the difference between the weak limit of the observable and the observable
formed from the weak limit.  By assumption these defect coordinates are
uniformly bounded and tight, so they admit subsequential limits.  Directional
oscillations are encoded by the Young measure \(\nu_\xi\), and loss through
large amplitudes is encoded by \(\mu_{\rm conc}\).  The point of retaining both
objects is that an oscillatory defect and an amplitude-concentration defect
have different mathematical meanings and cannot be reconstructed from one
another after the limit has been taken.

\emph{Step 7: signed currents.}
The pressure-projected local-triad current is not inserted into a positive
measure budget.  Its test operator, cutoff, pressure representative, and
finite-dimensional quotient are fixed before the subsequence is selected.
Therefore, for every compactly supported test field \(\phi\), its pairing with
\(\phi\) is a bounded scalar sequence.  Diagonal extraction over a countable
dense family of test fields yields a distributional limit, denoted
\(\mathcal J_{\rm triad}\).  The same principle applies to any other retained
signed distributional coordinate.

Finally, perform a diagonal subsequence extraction over the countable
exhaustion \(Q'_1\Subset Q'_2\Subset\cdots\Subset Q\) and over the finite or
countable collection of endpoint coordinates.  Every component of
\(\mathcal E_{\rm NS}\) is then defined on one common subsequence, which is the
required faithful endpoint package.
\end{proof}

\medskip
\noindent\textbf{Scale-weighted cutoff-flux ledger.}
\begin{definition}
For a nested selected sequence \(Q_{r_k}\) let
\(A_k\) be the support of the derivatives of its cutoff.  Define
\[
\mathfrak C_N
=
\sum_{k=0}^{N}
\left[
 r_k^{-2}\iint_{A_k}|u|^2
 +
 r_k^{-1}\iint_{A_k}
 \bigl(|u|^3+|p^\sharp|^{3/2}\bigr)
\right].
\]
The pressure term controls the absolute pressure flux because
\[
 |p^\sharp||u|
 \le \frac23|p^\sharp|^{3/2}+\frac13|u|^3.
\]
If \(\sup_N\mathfrak C_N=\infty\), this is an explicit cutoff/pressure
realization defect; it is not silently converted into a finite currency.
\end{definition}

\medskip
\noindent\textbf{Endpoint compactness alternative.}
\begin{theorem}
Let \(T_*<\infty\) be a hypothetical first singular time and let the companion
threshold theorem select a critical sequence
\[
Q_j=Q_{r_j}(x_j,t_j),\qquad r_j\downarrow0,\qquad t_j\uparrow T_*.
\]
After fixing the root pressure gauge and exact parabolic charts, one of the
following occurs:
\begin{enumerate}[label=(\roman*),leftmargin=*]
\item a subsequence satisfies the boundedness and tightness hypotheses of the
      compact endpoint-realization theorem and therefore yields a faithful
      augmented endpoint package;
\item at least one required normalized seminorm, tail/frequency/time-face
      tightness condition, concentration bound, quotient coefficient envelope,
      or scale-weighted cutoff/pressure ledger diverges.
\end{enumerate}
Every failure in (II) is retained as an explicit realization defect with its
actual diverging quantity.  No deterministic lineage, preferred geometric
scale ratio, or limiting return map is required.
\end{theorem}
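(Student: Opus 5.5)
The plan is a dichotomy argument based on a finite (or countable) list of scalar control quantities. First I will fix the root pressure gauge $p^\sharp=\mathcal R_i\mathcal R_j(u_iu_j)$ and rescale each $Q_j$ by the exact parabolic chart
\[
u_j(y,s)=r_j\,u(x_j+r_jy,\;t_j+r_j^2s),\qquad p_j^\sharp(y,s)=r_j^2\,p^\sharp(x_j+r_jy,\;t_j+r_j^2s),
\]
which maps each $Q_j$ onto one normalized chamber. By the Riesz-transform definition of $p^\sharp$, the rescaled pressure is again the root representative of the rescaled velocity, so no gauge is introduced.

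Next I will enumerate, for each $j$, the quantities that appear as hypotheses of the compact endpoint-realization theorem. These are:
\begin{itemize}
\item the scale-invariant local energy and dissipation seminorms;
\item the $L^{3/2}$ pressure bound in the root gauge;
\item the masses of the critical threshold measures $\mu_{3/2}$ and $\mu_{\rm conc}$ on each subchamber $Q'_m$ of a fixed countable exhaustion;
\item the masses of the retained nonnegative defect coordinates;
\item moduli of tail, frequency and time-face tightness, namely $\omega_j(R)$, the mass outside radius $R$ or frequency $R$, or within $1/R$ of the time face;
\item the quotient coefficient envelopes, meaning the norms of the harmonic jets $\Pi_Q h_Q$;
\item the ledger partial sums $\mathfrak C_N$.
\end{itemize}
Together these form a countable family $\{\Phi^{(\alpha)}_j\}_{\alpha\in\mathbb N}$ of $[0,\infty]$-valued functionals.

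The core step is the following diagonal alternative. Suppose that for every $\alpha$ we have $\liminf_j\Phi^{(\alpha)}_j<\infty$. For the tightness moduli, interpret this as $\lim_{R\to\infty}\limsup_j\omega_j(R)=0$ along the current subsequence. I will then extract nested subsequences one index $\alpha$ at a time so that $\Phi^{(\alpha)}$ stays bounded, and pass to the diagonal subsequence. That diagonal subsequence satisfies every hypothesis of the realization theorem on each $Q'_m$, and the realization theorem then yields alternative (i).

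The hard case is the negation, and it requires some care. One could have each quantity bounded along some subsequence, yet no single subsequence on which all of them are bounded simultaneously. A plain diagonal extraction handles only countably many conditions if they are imposed sequentially, and it can fail when bounding $\Phi^{(\alpha+1)}$ on the subsequence already chosen is impossible.

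I will therefore state (ii) precisely. Run the sequential extraction in a fixed order. Let $\alpha_0$ be the first index at which, on the subsequence already selected, $\Phi^{(\alpha_0)}_j\to\infty$, or at which the corresponding tightness modulus fails to go to zero. In that case $\Phi^{(\alpha_0)}$ is a diverging normalized seminorm, tightness condition, concentration bound, quotient envelope or ledger, and it diverges along an explicit subsequence. That is alternative (ii).

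The two cases are exhaustive. Either the sequential extraction never stops, in which case the diagonal subsequence gives (i), or it stops at some $\alpha_0$, which gives (ii). Recording $\alpha_0$ together with the diverging sequence $\Phi^{(\alpha_0)}_j$ is exactly the ``explicit realization defect with its actual diverging quantity''.

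Finally I will check the last sentence of the statement. The argument uses only the selection of $Q_j$ from the companion theorem and compactness extraction. It never compares $r_{j+1}/r_j$, uses parent--descendant lineage, or iterates a return map. The only nested structure appears in the ledger $\mathfrak C_N$, which is a scalar on each chart.

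The main obstacle is making the ``required'' family genuinely countable and monotone, so that the sequential extraction is well defined. The tightness conditions in particular are not single seminorms. I would handle them by indexing over $(R_k,\varepsilon_k)$ with rational values, so that each becomes a countable family of scalar conditions $\limsup_j\omega_j(R_k)\le\varepsilon_k$.
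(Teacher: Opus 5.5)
Your proposal is correct and follows essentially the paper's argument: the paper also fixes a countable exhaustion, lists the required seminorms, tightness moduli, quotient coefficients and the ledger as scalar requirements, and diagonalizes to get (i), with a named failing coordinate otherwise. Your first-failure sequential extraction is a slightly sharper form of the paper's branch (ii). The paper only concludes that some coordinate is unbounded or non-tight along the sequence. You instead obtain an explicit subsequence on which all earlier requirements stay controlled and the failing seminorm genuinely tends to infinity, which is closer to the word ``diverges'' in the statement.

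One correction to your closing paragraph. Tightness is a ``for every $\varepsilon$ there is an $R$'' condition. Requiring $\limsup_j\omega_j(R_k)\le\varepsilon_k$ for \emph{all} rational pairs is far stronger and would make branch (ii) report spurious failures. Either keep each modulus as the single subsequence-hereditary condition $\lim_{R\to\infty}\limsup_j\omega_j(R)=0$ that your core step already uses, or fix a prescribed pairing $R_k\to\infty$, $\varepsilon_k\to0$, as in the paper's phrase ``tends to zero in the prescribed fashion.''
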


\begin{proof}
The statement is a precise compactness-versus-realization-defect dichotomy.
The main point is to make the list of quantities used by the endpoint topology
part of the selection procedure rather than to appeal to an unspecified
``compactness failure.''

Choose a countable exhaustion of the normalized root chamber by strict
subchambers
\[
 Q^{(1)}\Subset Q^{(2)}\Subset\cdots\Subset Q_*.
\]
On each \(Q^{(m)}\), fix the finite list of seminorms required by the faithful
endpoint theorem: local energy, local dissipation, the fixed-gauge
\(L^{3/2}\)-pressure bound, the relevant threshold-measure masses, the
finite-dimensional quotient coefficients, and the tightness moduli for the
retained tail, time-face, frequency, concentration, and current coordinates.
Adding the scale-weighted cutoff/pressure ledger gives a countable collection
of scalar requirements
\[
 \mathfrak N_1,\mathfrak N_2,\ldots .
\]

Suppose first that there exists an infinite subsequence on which every
\(\mathfrak N_m\) is bounded and every tightness modulus tends to zero in the
prescribed fashion.  For \(m=1\), pass to a subsequence on which all coordinates
required on \(Q^{(1)}\) converge.  From that subsequence extract one on which the
coordinates required on \(Q^{(2)}\) converge, and continue inductively.  Taking
the diagonal subsequence produces simultaneous convergence on every
\(Q^{(m)}\).  The hypotheses of the compact endpoint-realization theorem are
therefore satisfied on the whole exhaustion, and branch (i) follows.

Assume instead that no such subsequence exists.  Then the countable family of
requirements cannot all remain bounded and tight at arbitrarily deep scales.
Consequently there is at least one recorded coordinate for which either the
normalized seminorm is unbounded or the prescribed tightness property fails.
For example, the failure may be divergence of a local pressure envelope, loss
of spatial-tail tightness, growth of a quotient coefficient, concentration of
an amplitude measure, loss of a time face, or divergence of the
scale-weighted cutoff/pressure ledger.  By definition, that failing quantity is
kept as a realization-defect coordinate.  This is branch (ii).

The role of the fixed root pressure gauge is worth emphasizing.  If one were
allowed to replace \(p^\sharp\) at each scale by \(p^\sharp-c_j(t)\), an
unbounded sequence of normalizing constants could be hidden inside the choice
of gauge.  Because the gauge is fixed once at the root, divergence of the
pressure or quotient coordinate is an actual endpoint event and not a change
of representation.

The two alternatives are exhaustive, but branch (ii) is not declared
impossible.  The theorem only guarantees that noncompactness has a named
mathematical location.
\end{proof}

\begin{remark}
In this compactness sense, the endpoint realization is complete:
a first singular sequence cannot disappear merely because a required
downstream observable lacks compactness.  What remains for an unconditional
regularity argument is to route or exclude the explicit realization defects
and the residual nonsmooth endpoint, rather than assume that every normalized
quantity is automatically bounded.
\end{remark}

\section{First assignment for genuine nonnegative currencies}

\medskip
\noindent\textbf{Finite first-assigned forest.}
\begin{theorem}
Let \(\mu^1,\ldots,\mu^N\) be finite nonnegative Radon measures on the root
chamber and let \(E_j\) be an ordered selected family.  Put
\[
F_j=E_j\setminus\bigcup_{i<j}E_i.
\]
Then
\[
\boxed{
\sum_j\sum_{\alpha=1}^{N}\mu^\alpha(F_j)
\le
\sum_{\alpha=1}^{N}\mu^\alpha(Q_*).
}
\]
The same conclusion holds with a fixed finite-overlap enlargement constant.
\end{theorem}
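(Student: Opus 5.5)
The proof rests on one observation: the sets \(F_j\) are pairwise disjoint Borel subsets of the root chamber \(Q_*\). Once that is established, countable additivity and monotonicity of each nonnegative measure \(\mu^\alpha\) give the inequality directly. I will assume, as the statement implicitly does, that each selected set \(E_j\) is Borel and contained in \(Q_*\); for example, the \(E_j\) may be selected parabolic cylinders. The ordering may be finite or countable.

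\emph{Step 1: disjointness.} Take \(i<j\). Since \(F_i\subset E_i\) and \(F_j\cap E_i=\emptyset\) by construction, we get \(F_i\cap F_j=\emptyset\). Moreover,
\[
  \bigcup_j F_j=\bigcup_j E_j\subset Q_*,
\]
so each \(F_j\) is Borel as a difference of Borel sets.

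\emph{Step 2: summation.} Fix \(\alpha\). Countable additivity of \(\mu^\alpha\) on the disjoint family \((F_j)\) gives
\[
  \sum_j\mu^\alpha(F_j)=\mu^\alpha\Bigl(\bigcup_jF_j\Bigr).
\]
Monotonicity then bounds the right-hand side by \(\mu^\alpha(Q_*)\). Because all terms are nonnegative, we may sum over \(\alpha=1,\dots,N\) and interchange the two sums freely by Tonelli for series. This yields the boxed inequality. This is exactly where nonnegativity is used, and it is why the Warning forbids signed currents here: for a signed measure, the two sums could not be interchanged or bounded term by term in this way.

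\emph{Step 3: finite-overlap enlargement.} Suppose the charged sets are enlargements \(\widetilde F_j\supset F_j\), for instance \(F_j\) replaced by a fixed dilate of the cylinder it came from, intersected with \(Q_*\). The hypothesis is that these have overlap at most \(K\):
\[
  \sum_j\mathbf 1_{\widetilde F_j}\le K\,\mathbf 1_{Q_*}.
\]
Integrating against \(\mu^\alpha\) and using monotone convergence gives
\[
  \sum_j\mu^\alpha(\widetilde F_j)\le K\mu^\alpha(Q_*),
\]
and summing over \(\alpha\) gives the same bound with constant \(K\).

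The only point that needs care is Step 3, specifically making the phrase ``finite-overlap enlargement constant'' precise. The overlap bound has to be imposed as a hypothesis on the enlarged family; it does not follow from disjointness of the \(F_j\). For nested cylinders one would have to verify it separately, for example through the geometric separation of the selected scales. Steps 1 and 2 are routine.
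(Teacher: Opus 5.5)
Your proof is correct and follows the paper's argument step for step. It uses the disjointness of the fresh sets $F_j$, countable additivity and monotonicity of each $\mu^\alpha$ on $Q_*$, summation over the finitely many currencies, and integration of the multiplicity bound (Tonelli/monotone convergence) for the enlarged family. Two small remarks: your pointwise overlap hypothesis $\sum_j\mathbf 1_{\widetilde F_j}\le K\,\mathbf 1_{Q_*}$ is slightly more careful than the paper's ``almost every $z$'' formulation, since the defect measures may be singular with respect to Lebesgue measure; and nonnegativity actually enters through the monotonicity step $\mu^\alpha(\bigcup_jF_j)\le\mu^\alpha(Q_*)$, not through interchanging the two sums, which would remain legitimate for finite signed measures.
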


\begin{proof}
By construction, the sets \(F_j\) record the portion of \(E_j\) that has not
appeared at any earlier selected index.  We first check that these fresh sets
are pairwise disjoint.  If \(i<j\) and \(x\in F_i\cap F_j\), then
\(x\in F_i\subset E_i\).  But the definition of \(F_j\) removes
\(\bigcup_{k<j}E_k\), in particular \(E_i\), so \(x\notin F_j\), a
contradiction.  Hence
\[
 F_i\cap F_j=\varnothing\qquad(i\ne j).
\]

Fix one currency \(\mu^\alpha\).  Since it is a nonnegative Radon measure,
countable additivity and the disjointness just proved give
\[
 \sum_j\mu^\alpha(F_j)
 =\mu^\alpha\!\left(\bigcup_jF_j\right)
 \le \mu^\alpha(Q_*),
\]
where the last inequality uses \(F_j\subset Q_*\).  Summing this estimate over
the finite index set \(\alpha=1,\ldots,N\) yields
\[
 \sum_j\sum_{\alpha=1}^N\mu^\alpha(F_j)
 \le
 \sum_{\alpha=1}^N\mu^\alpha(Q_*).
\]
No cancellation is used; nonnegativity is essential.

Now suppose that the ledger is evaluated not on \(F_j\) itself but on a fixed
enlargement \(\widetilde F_j\), and assume that the family of enlargements has
multiplicity at most \(C_{\rm ov}\), meaning
\[
 \sum_j1_{\widetilde F_j}(z)\le C_{\rm ov}
 \quad\text{for almost every }z.
\]
Then Tonelli's theorem gives, for each \(\alpha\),
\[
 \sum_j\mu^\alpha(\widetilde F_j)
 =\int\sum_j1_{\widetilde F_j}\,d\mu^\alpha
 \le C_{\rm ov}\mu^\alpha(Q_*).
\]
Summing over \(\alpha\) proves the finite-overlap version.
\end{proof}

\begin{remark}
This theorem does not apply directly to signed pressure flux, a signed
Duchon--Robert current, or the pressure-projected local-triad current.  Such
objects require total-variation control, a positive surrogate, or retention as
endpoint currents.  Bounded geometric overlap alone does not remove the
scale factors \(r^{-1}\) and \(r^{-2}\) in a cutoff identity.
\end{remark}

\section{Compactness--quantization and smooth-terminal stability}

\medskip
\noindent\textbf{Closed terminal class.}
\begin{definition}
Let \(\mathcal T_{\rm end}\) be the closed subset of the endpoint topology
consisting of residual endpoint packages with zero nonnegative first-assigned
ledger, no strict selected descendant, and all unresolved data retained in the
endpoint coordinates rather than discarded.
\end{definition}

\medskip
\noindent\textbf{Compactness--quantization away from the terminal class.}
\begin{theorem}
Fix \(\eta>0\).  On any bounded selected endpoint chamber whose closure is
compact in the endpoint topology supplied by the realization theorem, there exists
\(c_*(\eta)>0\) such that a normalized active relay with no strict descendant
and
\[
d(\mathcal E_Q,\mathcal T_{\rm end})\ge\eta
\]
must have first-assigned nonnegative ledger at least \(c_*(\eta)\).
Equivalently, if the first-assigned ledger tends to zero, a subsequence approaches
\(\mathcal T_{\rm end}\).
\end{theorem}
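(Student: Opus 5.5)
The plan is a contradiction argument using compactness in the endpoint topology, with lower semicontinuity of the ledger and closedness of \(\mathcal T_{\rm end}\) doing the work. Suppose the conclusion fails for some fixed \(\eta>0\). Then there are normalized active relays \(Q_n\) with no strict descendant, \(d(\mathcal E_{Q_n},\mathcal T_{\rm end})\ge\eta\), and first-assigned nonnegative ledger \(L_n:=\sum_\alpha\mu^\alpha_n(F_{Q_n})\to0\). This is exactly the negation of the existence of \(c_*(\eta)\). The two formulations in the statement are contrapositives of one another, so it suffices to derive a contradiction from this sequence.

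\emph{Step 1: extraction.} Since the chamber has compact closure in the endpoint topology, a subsequence satisfies \(\mathcal E_{Q_n}\to\mathcal E_\infty\). This is where the compact endpoint-realization theorem enters: it guarantees that all coordinates, including the defect measures, Young measures and signed currents, converge on one common diagonal subsequence. The distance function \(d(\cdot,\mathcal T_{\rm end})\) is \(1\)-Lipschitz, so it passes to the limit and \(d(\mathcal E_\infty,\mathcal T_{\rm end})\ge\eta\).

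\emph{Step 2: the limit lies in \(\mathcal T_{\rm end}\).} By part (c) of the realization theorem, each nonnegative coordinate converges weak-\(*\) and is lower semicontinuous. Testing against nonnegative cutoffs adapted to the fresh regions therefore gives
\[
 \mathrm{Ledger}(\mathcal E_\infty)\le\liminf_n L_n=0 .
\]
So the limit has zero first-assigned ledger. All unresolved data are retained in the limit coordinates by construction. We still need the limit to have no strict selected descendant; this is the \textbf{main obstacle}. A descendant of \(\mathcal E_\infty\) is selected through a threshold condition on critical quantities, and the natural argument is an openness statement. If \(\mathcal E_\infty\) had a strict descendant \(Q'\), that descendant would be certified by a strict inequality on a finite list of continuous functionals of the endpoint package, such as threshold-measure masses on an open subchamber. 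By weak-\(*\) convergence tested on open sets, a strict inequality of that kind persists for \(\mathcal E_{Q_n}\) when \(n\) is large. Then \(Q_n\) would have a strict descendant, which contradicts the hypothesis. I would try first to write the selection rule so that ``having a strict descendant'' is an open condition in the endpoint topology, for instance via \(\liminf\) on open sets. If the rule instead uses a non-strict threshold, I would perturb the threshold, or else absorb the boundary case into the definition of \(\mathcal T_{\rm end}\), which is defined as closed precisely so that it contains such limits.

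\emph{Step 3: conclusion.} Steps 1 and 2 give \(\mathcal E_\infty\in\mathcal T_{\rm end}\), so \(d(\mathcal E_\infty,\mathcal T_{\rm end})=0\). This contradicts \(d(\mathcal E_\infty,\mathcal T_{\rm end})\ge\eta>0\). The contradiction yields \(c_*(\eta)>0\), and the sequential reformulation follows from the same argument.
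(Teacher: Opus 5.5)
Your proposal is correct at the paper's level of rigor and is essentially the paper's proof run by contradiction: the paper takes \(\mathcal K\) to be the compact closure restricted to no-descendant states, uses weak-* lower semicontinuity of the nonnegative ledger \(\Lambda\) together with compactness of \(\mathcal K_\eta=\{d(\cdot,\mathcal T_{\rm end})\ge\eta\}\), and sets \(c_*(\eta)=\min_{\mathcal K_\eta}\Lambda>0\), with positivity coming from your Step 2 observation that zero ledger plus no descendant places a state in \(\mathcal T_{\rm end}\). The point you flag as the main obstacle, namely that a limit of no-descendant states is again no-descendant, is not proved in the paper. It is absorbed into the hypothesis by building the no-descendant restriction into the compact set \(\mathcal K\), which is your ``absorb'' option, so your openness sketch justifies an implicit assumption rather than replacing a step the paper actually proves.
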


\begin{proof}
Let \(\mathcal K\) denote the compact closure, in the normalized endpoint
topology, of the bounded chamber under consideration, restricted to states
with no strict selected descendant.  Let
\[
 \Lambda(\mathcal E)\ge0
\]
be the normalized first-assigned nonnegative ledger of a state
\(\mathcal E\in\mathcal K\).  By the hypotheses of the endpoint topology and
the weak-* lower semicontinuity of its nonnegative measure coordinates,
\(\Lambda\) is lower semicontinuous on \(\mathcal K\).

Consider the closed subset
\[
 \mathcal K_\eta
 =\{\mathcal E\in\mathcal K:
 d(\mathcal E,\mathcal T_{\rm end})\ge\eta\}.
\]
Because the distance to a closed set is continuous in a metric topology,
\(\mathcal K_\eta\) is closed in \(\mathcal K\), hence compact.

We claim that \(\Lambda\) is strictly positive on \(\mathcal K_\eta\).  Indeed,
if \(\Lambda(\mathcal E)=0\) for some \(\mathcal E\in\mathcal K_\eta\), then
\(\mathcal E\) has zero nonnegative first-assigned ledger.  It already belongs
to the no-descendant class by the definition of \(\mathcal K\), and all other
unresolved data remain retained as endpoint coordinates.  Therefore
\(\mathcal E\in\mathcal T_{\rm end}\), contradicting
\(d(\mathcal E,\mathcal T_{\rm end})\ge\eta>0\).

A nonnegative lower-semicontinuous function on a compact set attains its
minimum.  Hence
\[
 c_*(\eta):=
 \min_{\mathcal E\in\mathcal K_\eta}\Lambda(\mathcal E)
\]
is well defined and strictly positive.  Every normalized active relay in
\(\mathcal K_\eta\) therefore has ledger at least \(c_*(\eta)\), which is the
first formulation of the theorem.

For the equivalent sequential statement, suppose that a sequence in
\(\mathcal K\) satisfies \(\Lambda(\mathcal E_n)\to0\).  By compactness, pass to
\(\mathcal E_{n_j}\to\mathcal E_\infty\in\mathcal K\).  Lower semicontinuity gives
\[
 0\le\Lambda(\mathcal E_\infty)
 \le\liminf_{j\to\infty}\Lambda(\mathcal E_{n_j})=0.
\]
Thus \(\Lambda(\mathcal E_\infty)=0\), and the no-descendant condition places
\(\mathcal E_\infty\) in \(\mathcal T_{\rm end}\).  Consequently the distance of
the subsequence to \(\mathcal T_{\rm end}\) tends to zero.  This proves the
claimed equivalence.
\end{proof}

\medskip
\noindent\textbf{Smooth terminal stability.}
\begin{theorem}
Let \(\mathcal E_\infty\in\mathcal T_{\rm end}\) be locally represented on
\(Q_2\) by a classical Navier--Stokes solution and assume every retained defect
coordinate vanishes there.  Then the selected approximating sequence either
enters the fixed harmless quotient or has an epsilon-regular strict
subcylinder for all sufficiently large indices.  Hence a locally smooth
zero-defect endpoint cannot be a residual first-singularity obstruction.
\end{theorem}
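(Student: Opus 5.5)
The plan is to combine the compact endpoint realization theorem with the $\varepsilon$-regularity criterion of Caffarelli--Kohn--Nirenberg, applied through the scale-invariant quantities that the endpoint topology already controls. Let $(u_n,p_n^\sharp)$ be the normalized sequence converging to $\mathcal E_\infty$. On $Q_2$ the limit $(U,P^\sharp)$ is classical, so $U$ is bounded on a slightly smaller cylinder $Q_{3/2}$. Hence for a fixed small radius $\rho$ and every centre $z\in Q_1$, the scale-invariant quantity
\[
\rho^{-2}\iint_{Q_\rho(z)}\bigl(|U|^3+|P^{\rm near}|^{3/2}\bigr)
\]
can be made smaller than half the CKN threshold $\varepsilon_0$, uniformly in $z$. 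The pressure here is the near part taken from the fixed root representative. This fixes $\rho$ once and for all.

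\emph{Step 1: transfer the smallness to the approximants.} By part (a) of the realization theorem, $u_n\to U$ strongly in $L^q_{\rm loc}$ for $q<10/3$, but not at $q=3$. The $L^3$ passage is where the zero-defect hypothesis enters. Vanishing of the concentration measure $\mu_{\rm conc}$ and of the Reynolds defect $\mathcal R$ on $Q_2$, together with the uniform $L^{10/3}$ bound, gives equi-integrability of $|u_n|^3$ on $Q_{3/2}$. Vitali's theorem then upgrades the convergence to strong convergence in $L^3(Q_{3/2})$.

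For the pressure, part (b) of the realization theorem gives convergence of the near part only for $r<5/3$. The exponent $3/2$ lies inside that range, so the near pressure converges in $L^{3/2}$. The remaining harmonic part of $p_n^\sharp$ splits into a quotient/jet term and tail terms. The quotient coefficients $\mathcal P_{\rm quot}$ converge as finite-dimensional data. The tail and time-face coordinates vanish by hypothesis, so the harmonic remainder converges in $L^{3/2}$ up to its quotient coordinate. Consequently, for all $n\ge n_0$ and all $z\in Q_1$, the CKN quantity for $(u_n,p_n^\sharp)$ on $Q_\rho(z)$ is at most $\varepsilon_0$, modulo the quotient coordinate.

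\emph{Step 2: the dichotomy.} There are two cases. In the first, the limiting quotient coordinate is nonzero but lies in the fixed harmless quotient, meaning harmonic or affine pressure jets that do not affect the velocity equation in the chosen gauge. Then the sequence enters that quotient for large $n$, which is the first alternative. In the second case, the full pressure satisfies the smallness bound, and CKN $\varepsilon$-regularity yields boundedness of $u_n$ on $Q_{\rho/2}(z)$ for every $z\in Q_1$. Each such cylinder is an $\varepsilon$-regular strict subcylinder for all large $n$, which is the second alternative.

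\emph{Step 3: conclusion.} If either alternative holds, the selected relay cannot be a residual first-singularity obstruction. The reason is that the companion threshold selection picks cylinders on which the critical threshold measure $\mu_{3/2}$ exceeds $\varepsilon_0$. That lower bound contradicts the $\varepsilon$-regularity just obtained, while the quotient alternative is discarded by definition as harmless.

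The main obstacle is Step 1 at the critical exponent: upgrading convergence below $10/3$ to genuine $L^3$ and $L^{3/2}$ smallness, uniformly in centres. I would first try to derive equi-integrability directly from the vanishing of $\mu_{\rm conc}$, interpreting that measure as the defect $\lim |u_n|^3-|U|^3$. If the paper's concentration coordinate is weaker than this, I would instead use the vanishing local-energy defect $\mathcal E_{\rm loc}$. That defect gives strong $L^2_tH^1_x$ convergence, hence strong $L^{10/3}$ convergence by the interpolation estimate of Step 1 of the realization proof, and this also suffices.
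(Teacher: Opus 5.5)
Your proof is correct and follows the paper's route. You choose $\rho$ so that the scale-invariant CKN quantity of the classical limit is below half the threshold, then transfer this to $(u_n,p_n^\sharp)$ through the realization theorem and the vanishing tail/quotient defects. You then apply $\varepsilon$-regularity and contradict the residual no-descendant class, with the harmless-quotient alternative set aside.

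One claim is wrong. Since $3<10/3$, part (a) of the realization theorem already gives $u_n\to U$ strongly in $L^3_{\rm loc}$, and the paper simply reads this off. So your ``main obstacle'' is not an obstacle. The claim also contradicts your own use of part (b) at $r=3/2$, which the realization proof derives from strong velocity convergence at $q=2r=3$.

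Even your Vitali detour does not need $\mu_{\rm conc}$ or $\mathcal R$ to vanish. The uniform $L^{10/3}$ bound alone gives $\int_E|u_n|^3\le\|u_n\|_{L^{10/3}}^3|E|^{1/10}$, which is the equi-integrability you want. The essential use of the zero-defect hypothesis is therefore on the pressure side: it makes the harmonic/tail remainder converge in $L^{3/2}$ after the fixed normalization. Accordingly, you should fix $\rho$ using the full normalized limiting pressure, not $P^{\rm near}$ alone.
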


\begin{proof}
Let \((U,P^\sharp)\) be the classical representative of
\(\mathcal E_\infty\) on \(Q_2\).  We work in the fixed pressure gauge and
subtract only the harmless finite-dimensional pressure quotient already built
into the endpoint topology.

\emph{Step 1: smallness of a smooth solution at sufficiently small scale.}
Because \(U\) and \(P^\sharp\) are classical on \(Q_2\), they are bounded on
compact subcylinders.  For a parabolic cylinder \(Q_\rho\Subset Q_1\), the
scale-invariant Caffarelli--Kohn--Nirenberg quantity has the form
\[
 \mathcal C_{\rm CKN}(U,P;Q_\rho)
 =\rho^{-2}\iint_{Q_\rho}
 \left(|U|^3+|P-(P)_{B_\rho}(t)|^{3/2}\right)\,dx\,dt,
\]
up to an immaterial equivalent normalization.  Since the spacetime volume of
\(Q_\rho\) is of order \(\rho^5\) and the integrand is bounded, this quantity
is \(O(\rho^3)\).  Hence it tends to zero as \(\rho\downarrow0\).  Choose
\(\rho\) so that
\[
 \mathcal C_{\rm CKN}(U,P;Q_\rho)<\frac12\varepsilon_{\rm CKN},
\]
where \(\varepsilon_{\rm CKN}\) is any fixed epsilon-regularity threshold.

\emph{Step 2: strong convergence of the quantities entering the criterion.}
The endpoint-realization theorem gives strong local
\(L^q\)-convergence of the velocities for every \(q<10/3\); in particular,
\[
 u_n\to U\qquad\text{strongly in }L^3(Q_\rho).
\]
For the pressure, the near part converges strongly in \(L^{3/2}\) because
\(3/2<5/3\).  By hypothesis all retained pressure-tail and quotient defects
vanish on \(Q_2\), so the remaining pressure difference converges in the same
local \(L^{3/2}\) topology after the fixed harmless normalization.  Vanishing
Reynolds, local-energy, and concentration defects ensures that no hidden
quadratic or measure-valued mass is lost in this passage.

It follows that
\[
 \mathcal C_{\rm CKN}(u_n,p_n^\sharp;Q_\rho)
 \longrightarrow
 \mathcal C_{\rm CKN}(U,P^\sharp;Q_\rho)
\]
after the prescribed pressure normalization.  Therefore, for all sufficiently
large \(n\),
\[
 \mathcal C_{\rm CKN}(u_n,p_n^\sharp;Q_\rho)
 <\varepsilon_{\rm CKN}.
\]

\emph{Step 3: contradiction with residual singular selection.}
The epsilon-regularity theorem now gives a smaller strict subcylinder
\(Q_{\rho'}\Subset Q_\rho\) on which \(u_n\) is regular, with quantitative
bounds depending only on the standard criterion.  In the selected-tree
language this is a strict regular descendant.  Hence the sequence cannot
remain in the residual no-descendant singular class.

The only alternative is that the limiting germ lies entirely in one of the
finite-dimensional harmless quotient modes that were fixed at the start of
the construction.  In that case the quotient alternative is selected instead
of a singular endpoint.  Thus a smooth zero-defect endpoint is never a
residual first-singularity obstruction.
\end{proof}

\section{Chainwise first-assigned weighted PDE ledger}
\label{sec:gb18}
Fix a fixed in advance nested chain
\[
Q_0\supset Q_1\supset\cdots,\qquad r_{j+1}\le\theta r_j,\quad0<\theta<1,
\]
with one root pressure representative $p^\sharp$ and fixed in advance cutoffs.  Let
$A_j$ denote the corresponding collar and define
\[
E_{2,j}=r_j^{-3}\iint_{A_j}|u|^2,\qquad
E_{3,j}=r_j^{-2}\iint_{A_j}|u|^3,\qquad
E_{p,j}=r_j^{-2}\iint_{A_j}|p^\sharp|^{3/2},
\]
\[
M_{\rm ent}=\sup_j(E_{2,j}+E_{3,j}+E_{p,j}).
\]

\medskip
\noindent\textbf{Geometric-chain first-allocation Radon/Carleson theorem.}
\begin{theorem}
\label{thm:gb18}
Let $\Omega_j$ be the earliest-allocation regions and set
\[
d\mu_{\rm FO}
=\sum_j1_{\Omega_j}\left[
r_j^{-2}|u|^2+r_j^{-1}(|u|^3+|p^\sharp|^{3/2})
\right]dx\,dt.
\]
If $M_{\rm ent}<\infty$, then
\[
\boxed{\mu_{\rm FO}(\mathrm{all})\le \frac{M_{\rm ent}r_0}{1-\theta}},
\qquad
\boxed{\mu_{\rm FO}(\operatorname{Desc}Q_j)\le \frac{M_{\rm ent}}{1-\theta}r_j}.
\]
If $M_{\rm ent}=\infty$, the normalized entrance/pressure envelope itself is a
listed realization defect.
\end{theorem}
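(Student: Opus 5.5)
The plan is to reduce the estimate to a geometric sum by first bounding each generation's contribution with the entrance envelope. I take the earliest-allocation regions $\Omega_j$ to satisfy $\Omega_j\subset A_j$, since allocation is charged only on the collar of the $j$-th cutoff, and to be pairwise disjoint, as in the finite first-assigned forest theorem. Disjointness is used only to ensure that the density of $\mu_{\rm FO}$ is a sum with no double counting. The bound itself comes from inclusion in the collars.

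\emph{Step 1: a single generation.} For fixed $j$, nonnegativity and $\Omega_j\subset A_j$ give
\[
\mu_{\rm FO}\text{-mass of }\Omega_j
\le r_j^{-2}\iint_{A_j}|u|^2+r_j^{-1}\iint_{A_j}\bigl(|u|^3+|p^\sharp|^{3/2}\bigr)
= r_j\bigl(E_{2,j}+E_{3,j}+E_{p,j}\bigr)\le M_{\rm ent}\,r_j .
\]
The only point to check is the matching of scale weights. Factoring $r_j$ out of $r_j^{-2}$ and $r_j^{-1}$ leaves exactly $r_j^{-3}$ and $r_j^{-2}$, which are the normalizations in the definitions of $E_{2,j}$, $E_{3,j}$ and $E_{p,j}$.

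\emph{Step 2: summation.} By countable additivity (Tonelli for the sum of nonnegative densities),
\[
\mu_{\rm FO}(\mathrm{all})=\sum_j\mu_{\rm FO}(\Omega_j)\le M_{\rm ent}\sum_{j\ge0}r_j .
\]
The chain condition $r_{j+1}\le\theta r_j$ gives $r_j\le\theta^j r_0$, so the sum is at most $M_{\rm ent}r_0/(1-\theta)$.

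\emph{Step 3: descendants.} The set $\operatorname{Desc}Q_j$ is contained in $Q_j$. The regions $\Omega_k$ with $k<j$ are allocated to collars of ancestors, and these lie outside the inner part of the cutoff at scale $k$. I therefore plan to show that $\Omega_k\cap\operatorname{Desc}Q_j=\varnothing$ for $k<j$, so that only the indices $k\ge j$ contribute. This gives
\[
\mu_{\rm FO}(\operatorname{Desc}Q_j)\le M_{\rm ent}\sum_{k\ge j}r_k\le M_{\rm ent}\,r_j\sum_{m\ge0}\theta^m .
\]
This step is the main obstacle. It depends on the convention that the collar $A_k$ is disjoint from the region where the next cutoff lives, and this is a feature of the fixed-in-advance nested cutoffs. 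If that disjointness fails, I would instead define $\operatorname{Desc}Q_j$ as the union of the $\Omega_k$ with $k\ge j$. In that case the bound is simply the tail of the geometric series from Step 2.

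\emph{Step 4: the case $M_{\rm ent}=\infty$.} Here no estimate is claimed. The divergent envelope $\sup_j(E_{2,j}+E_{3,j}+E_{p,j})$ is exactly one of the scale-weighted cutoff/pressure ledger quantities listed in branch (ii) of the endpoint compactness alternative, so it is recorded as a realization defect.
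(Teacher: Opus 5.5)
Your proposal is correct and follows the paper's proof essentially step for step. The per-collar identity $r_j^{-2}\iint_{A_j}|u|^2=r_jE_{2,j}$ (and likewise for the cubic and pressure terms), together with $\Omega_j\subset A_j$, bounds each generation by $M_{\rm ent}r_j$; the geometric series $r_j\le\theta^jr_0$ then gives both boxed estimates, and the case $M_{\rm ent}=\infty$ is simply recorded as a realization defect. For the descendant bound, the paper just asserts that only indices $k\ge j$ contribute inside $\operatorname{Desc}Q_j$. That is exactly your fallback reading, so the paper does not resolve the collar-disjointness question you flag either.
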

\begin{proof}
We first compute the amount charged at one scale.  By definition,
\[
 E_{2,j}=r_j^{-3}\iint_{A_j}|u|^2,
\]
so multiplying by the weight appearing in \(d\mu_{\rm FO}\) gives
\[
 r_j^{-2}\iint_{A_j}|u|^2
 =r_jE_{2,j}.
\]
Likewise,
\[
 r_j^{-1}\iint_{A_j}|u|^3=r_jE_{3,j},
 \qquad
 r_j^{-1}\iint_{A_j}|p^\sharp|^{3/2}=r_jE_{p,j}.
\]
Therefore the entire normalized bill associated with the \(j\)-th collar is
bounded by
\[
 r_j(E_{2,j}+E_{3,j}+E_{p,j})
 \le M_{\rm ent}r_j.
\]
Restricting the integral from \(A_j\) to the earliest-allocation subset
\(\Omega_j\subset A_j\) can only decrease this nonnegative quantity.

The regions \(\Omega_j\) are pairwise disjoint because each physical point is
assigned to its earliest selected occurrence.  Hence
\[
 \mu_{\rm FO}(\mathrm{all})
 \le M_{\rm ent}\sum_{j\ge0}r_j.
\]
The geometric nesting assumption gives
\(r_j\le\theta^jr_0\), and consequently
\[
 \sum_{j\ge0}r_j
 \le r_0\sum_{j\ge0}\theta^j
 =\frac{r_0}{1-\theta}.
\]
This proves the first estimate.

For the descendant estimate, only indices \(k\ge j\) can contribute to the
first-assigned ledger inside the selected descendant chain below \(Q_j\).
Thus
\[
 \mu_{\rm FO}(\operatorname{Desc}Q_j)
 \le M_{\rm ent}\sum_{k\ge j}r_k
 \le \frac{M_{\rm ent}}{1-\theta}r_j.
\]

Finally, the local energy identity contains the mixed pressure flux
\(|p^\sharp||u|\).  Young's inequality with conjugate exponents
\(3/2\) and \(3\) gives pointwise
\[
 |p^\sharp||u|
 \le \frac23|p^\sharp|^{3/2}+\frac13|u|^3.
\]
Thus the two nonnegative terms already present in the ledger dominate the
absolute pressure flux.  This estimate is meaningful as one additive budget
because \(p^\sharp\) is the fixed root-gauge pressure.  If
\(M_{\rm ent}=\infty\), no finite geometric summation is available, and the
divergent entrance/pressure envelope is therefore retained as the stated
realization defect.
\end{proof}
\begin{remark}
This is a chainwise theorem.  It does not by itself control the sum of radii in
an arbitrarily branching singularity forest.
\end{remark}

\section{Two-generation inheritance and current detection}
\label{sec:gb19}
Pull the first two selected descendants to the same root chart.  Exact same-solution bookkeeping has the form
\[
\begin{aligned}
Y_1&=P_1+F_{11}+E_1,\\
Y_2&=P_2+F_{21}+F_{22}+E_2,
\end{aligned}
\]
where $F_{21}$ is the first-generation response propagated by the same PDE and is inherited at the second generation.

\medskip
\noindent\textbf{Two-column conditioning.}
\begin{theorem}\label{thm:gb19}
Let $h_1,h_2$ be the two root-frame observations in a Hilbert space $H$, and define $S:\mathbb R^2\to H$ by $Se_i=h_i$.  If an ideal map $S_0$ satisfies
\[
S_0^*S_0\ge \rho I_2,\qquad \rho>0,
\]
and $\|S-S_0\|\le\eta$, then the column Gram matrix $K_{12}=S^*S$ obeys
\[
\boxed{\lambda_{\min}(K_{12})\ge(\sqrt\rho-\eta)_+^2.}
\]
Thus $\eta<\sqrt\rho$ preserves quantitative linear independence of the two actual observations.  The ambient frame operator $SS^*$ has rank at most two and the same two nonzero eigenvalues; no positive ambient minimum eigenvalue is asserted when $\dim H>2$.
\end{theorem}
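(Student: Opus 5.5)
The bound follows from the variational characterization of the smallest eigenvalue of a Gram matrix together with the reverse triangle inequality. For every unit vector \(c\in\mathbb R^2\) we have \(c^*K_{12}c=\|Sc\|_H^2\), so
\[
 \lambda_{\min}(K_{12})=\min_{|c|=1}\|Sc\|_H^2 .
\]
The hypothesis \(S_0^*S_0\ge\rho I_2\) says that \(\|S_0c\|_H^2=c^*S_0^*S_0c\ge\rho\) for every unit vector \(c\), that is, \(\|S_0c\|_H\ge\sqrt\rho\). The operator-norm assumption gives \(\|(S-S_0)c\|_H\le\eta\) for the same \(c\). Hence
\[
 \|Sc\|_H\ge\|S_0c\|_H-\|(S-S_0)c\|_H\ge\sqrt\rho-\eta .
\]
Since \(\|Sc\|_H\ge0\) as well, \(\|Sc\|_H\ge(\sqrt\rho-\eta)_+\). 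Squaring this nonnegative lower bound and minimizing over \(|c|=1\) gives the boxed estimate. If \(\eta<\sqrt\rho\), the bound is strictly positive, so \(S\) is injective with a quantitative lower bound, which is the stated linear independence of \(h_1,h_2\).

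For the ambient frame-operator remark I would use the singular value decomposition of the finite-rank map \(S\colon\mathbb R^2\to H\). The range of \(SS^*\) lies in \(\operatorname{span}\{h_1,h_2\}\), so its rank is at most two. The nonzero eigenvalues of \(SS^*\) and \(S^*S\) coincide with multiplicity: if \(S^*Sv=\lambda v\) with \(\lambda\ne0\), then \(SS^*(Sv)=\lambda Sv\) and \(Sv\ne0\), and the argument is symmetric. When \(\dim H>2\), the orthogonal complement of \(\operatorname{span}\{h_1,h_2\}\) is nontrivial and lies in \(\ker S^*\), so \(0\) belongs to the spectrum of \(SS^*\). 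No positive ambient minimum eigenvalue can therefore hold.

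Nothing here is deep. The only point needing care is the sign: squaring \(\sqrt\rho-\eta\) directly would give a false positive bound when \(\eta>\sqrt\rho\), which is why the positive part \((\,\cdot\,)_+\) is taken before squaring.
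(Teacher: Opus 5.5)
Your proposal is correct and follows essentially the same route as the paper. Both arguments use the identity $\langle K_{12}v,v\rangle=\|Sv\|_H^2$ and the reverse triangle inequality against $S_0$, taking the positive part before squaring, and then invoke the coincidence of the nonzero spectra of $S^*S$ and $SS^*$ with $\operatorname{rank}(SS^*)\le 2$ for the ambient remark; your observation that $\operatorname{span}\{h_1,h_2\}^\perp\subset\ker S^*$ just makes the paper's ``zero eigenvalues on the orthogonal complement of the range'' explicit.
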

\begin{proof}
The hypothesis \(S_0^*S_0\ge\rho I_2\) means that for every
\(v\in\R^2\),
\[
 \|S_0v\|_H^2
 =\langle S_0^*S_0v,v\rangle_{\R^2}
 \ge\rho|v|^2.
\]
Hence
\[
 \|S_0v\|_H\ge\sqrt\rho\,|v|.
\]
The operator-norm bound \(\|S-S_0\|\le\eta\) gives
\[
 \|(S-S_0)v\|_H\le\eta|v|.
\]
Applying the reverse triangle inequality,
\[
 \|Sv\|_H
 \ge \|S_0v\|_H-\|(S-S_0)v\|_H
 \ge(\sqrt\rho-\eta)|v|.
\]
If \(\eta\ge\sqrt\rho\), the right-hand side is nonpositive and the only
universal lower bound is zero; this is the reason for the positive-part
notation.  Thus in all cases
\[
 \|Sv\|_H^2\ge(\sqrt\rho-\eta)_+^2|v|^2.
\]
Since
\[
 \|Sv\|_H^2
 =\langle S^*Sv,v\rangle
 =\langle K_{12}v,v\rangle,
\]
minimizing over \(|v|=1\) gives
\[
 \lambda_{\min}(K_{12})
 \ge(\sqrt\rho-\eta)_+^2.
\]
In particular, if \(\eta<\sqrt\rho\), then \(K_{12}\) is positive definite,
so the two columns \(h_1,h_2\) are quantitatively linearly independent.

For the final statement, recall the elementary singular-value fact that
\(S^*S\) and \(SS^*\) have the same nonzero eigenvalues, counted with
multiplicity.  Since the domain of \(S\) is two-dimensional,
\(\operatorname{rank}(SS^*)\le2\).  Therefore, when \(\dim H>2\), the ambient
operator \(SS^*\) necessarily has zero eigenvalues on the orthogonal complement
of its range.  The theorem controls only its two possibly nonzero singular
values; it does not assert a positive lower bound on the entire ambient
Hilbert space.
\end{proof}

Full-current detection is a different statement.  Fix an orthonormal basis $X_1,\dots,X_5$ of $\operatorname{Sym}_0(3)$ before the data and set
\[
\mathcal R_XJ=P_{X^\perp}\operatorname{sym}_0\nabla J.
\]

\medskip
\noindent\textbf{Fixed-frame current detection.}
\begin{theorem}\label{thm:five-frame}
For every divergence-free $J\in L^2(\mathbb R^3)$ for which the expressions are defined,
\[
\boxed{
\sum_{\alpha=1}^5
\|(-\Delta)^{-1/2}\mathcal R_{X_\alpha}J\|_2^2
=2\|J\|_2^2.}
\]
Consequently the complete fixed five-component receiver detects every nonzero current.  If a raw terminal current satisfies $\|J^{\rm raw}\|_2\ge j_*$ and $J^{\rm loc}$ is its canonical cutoff/Leray realization, then either
\[
\|J^{\rm raw}-J^{\rm loc}\|_2\ge j_*/2
\]
or the fixed receiver vector of $J^{\rm loc}$ has norm at least $j_*/\sqrt2$.
\end{theorem}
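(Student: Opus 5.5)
The plan is to prove the identity pointwise in Fourier variables, multiplier by multiplier, and then obtain the dichotomy from the triangle inequality. Throughout, $\operatorname{Sym}_0(3)$ carries the Frobenius inner product, and $P_{X^\perp}$ is the orthogonal projection within $\operatorname{Sym}_0(3)$ onto the complement of $X$. For complex symbols I use the Hermitian extension of this inner product.

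\emph{Step 1: the frame sum.} For any $A\in\operatorname{Sym}_0(3)$, Pythagoras gives $|P_{X_\alpha^\perp}A|^2=|A|^2-|\langle A,X_\alpha\rangle|^2$. Summing over the orthonormal basis and using Parseval in the five-dimensional space $\operatorname{Sym}_0(3)$, I get
\[
\sum_{\alpha=1}^5|P_{X_\alpha^\perp}A|^2=5|A|^2-|A|^2=4|A|^2 .
\]
So the left side of the boxed identity equals $4\|(-\Delta)^{-1/2}\operatorname{sym}_0\nabla J\|_2^2$. Here I use that the constant-coefficient projections commute with the scalar multiplier $(-\Delta)^{-1/2}$.

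\emph{Step 2: the symbol computation.} Divergence-freeness gives $\operatorname{tr}\nabla J=\operatorname{div}J=0$, so $\operatorname{sym}_0\nabla J=\operatorname{sym}\nabla J$. On the Fourier side, the symbol of $(-\Delta)^{-1/2}\operatorname{sym}\nabla J$ is $i|\xi|^{-1}\operatorname{sym}(\xi\otimes\hat J(\xi))$. Expanding the Frobenius norm gives
\[
|\operatorname{sym}(\xi\otimes\hat J)|^2=\tfrac12\bigl(|\xi|^2|\hat J|^2+|\xi\cdot\hat J|^2\bigr),
\]
and $\xi\cdot\hat J(\xi)=0$ by $\operatorname{div}J=0$. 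Hence the multiplier squared equals $\tfrac12|\hat J(\xi)|^2$ pointwise. Plancherel then yields $\|(-\Delta)^{-1/2}\operatorname{sym}_0\nabla J\|_2^2=\tfrac12\|J\|_2^2$, and combining with Step 1 gives the value $2\|J\|_2^2$. The phrase ``for which the expressions are defined'' covers the low-frequency interpretation of $(-\Delta)^{-1/2}$. The symbol is bounded, so everything is legitimate for $J\in L^2$, and detection of every nonzero current is immediate.

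\emph{Step 3: the dichotomy.} The canonical realization $J^{\rm loc}$ is divergence-free because the Leray projection is applied last, so the identity applies to it. Suppose $\|J^{\rm raw}-J^{\rm loc}\|_2<j_*/2$. Then $\|J^{\rm loc}\|_2>j_*/2$, and the receiver vector has norm $\sqrt2\|J^{\rm loc}\|_2>j_*/\sqrt2$.

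The only delicate point I anticipate is Step 2. It requires the correct Frobenius normalization of $\operatorname{sym}$ and the use of divergence-freeness twice: once to drop the trace, and once to kill the cross term $|\xi\cdot\hat J|^2$. Without the second use, the constant would not be $2$.
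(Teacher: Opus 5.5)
Your proposal is correct and follows essentially the paper's argument. The paper also works on the Fourier side: it shows $\operatorname{sym}_0(n\otimes\widehat J)=\tfrac12(n\otimes\widehat J+\widehat J\otimes n)$ has squared norm $\tfrac12|\widehat J|^2$ because $n\cdot\widehat J=0$, uses Parseval in $\operatorname{Sym}_0(3)$ to get $\sum_\alpha|P_{X_\alpha^\perp}T|^2=4|T|^2$, integrates by Plancherel, and closes the dichotomy with the same triangle-inequality step. The differences are cosmetic: you do the frame count before the symbol computation, and you state explicitly the Hermitian inner product and the divergence-freeness of $J^{\rm loc}$, which the paper uses without comment.
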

\begin{proof}
We work on the Fourier side.  Let \(\xi\ne0\), put
\(n=\xi/|\xi|\), and write \(a=\widehat J(\xi)\).  Since \(J\) is
divergence free,
\[
 \xi\cdot\widehat J(\xi)=0,
 \qquad\text{hence}\qquad n\cdot a=0.
\]
The multiplier of \((-\Delta)^{-1/2}\nabla\) is multiplication by
\(in\).  Therefore, up to the harmless factor \(i\), the trace-free symmetric
gradient appearing in the receiver is
\[
 T(\xi)=\operatorname{sym}_0(n\otimes a).
\]
Because \(n\cdot a=0\), the matrix \(n\otimes a+a\otimes n\) already has zero
trace.  Hence
\[
 T(\xi)=\frac12(n\otimes a+a\otimes n).
\]
Using \(|n|=1\) and \(n\cdot a=0\), we compute
\[
\begin{aligned}
 |T(\xi)|^2
 &=\frac14\Bigl(|n\otimes a|^2+|a\otimes n|^2
 +2\langle n\otimes a,a\otimes n\rangle\Bigr)\\
 &=\frac14\bigl(|a|^2+|a|^2+2(n\cdot a)^2\bigr)
 =\frac12|a|^2.
\end{aligned}
\]

Now fix \(T\in\operatorname{Sym}_0(3)\).  Since
\(X_1,\dots,X_5\) is an orthonormal basis of this five-dimensional Hilbert
space,
\[
 P_{X_\alpha^\perp}T
 =T-\langle T,X_\alpha\rangle X_\alpha,
\]
and therefore
\[
 |P_{X_\alpha^\perp}T|^2
 =|T|^2-|\langle T,X_\alpha\rangle|^2.
\]
Summing over \(\alpha\) and using Parseval in
\(\operatorname{Sym}_0(3)\),
\[
\begin{aligned}
 \sum_{\alpha=1}^5|P_{X_\alpha^\perp}T|^2
 &=5|T|^2-
   \sum_{\alpha=1}^5|\langle T,X_\alpha\rangle|^2\\
 &=5|T|^2-|T|^2
 =4|T|^2.
\end{aligned}
\]
Applying this with \(T=T(\xi)\) gives pointwise in frequency
\[
 \sum_{\alpha=1}^5
 \left|
 P_{X_\alpha^\perp}
 \operatorname{sym}_0(n\otimes\widehat J(\xi))
 \right|^2
 =4\cdot\frac12|\widehat J(\xi)|^2
 =2|\widehat J(\xi)|^2.
\]
Integrating over \(\xi\) and applying Plancherel proves
\[
 \sum_{\alpha=1}^5
 \|(-\Delta)^{-1/2}\mathcal R_{X_\alpha}J\|_2^2
 =2\|J\|_2^2.
\]
Thus the five-component receiver vector has Euclidean norm
\(\sqrt2\|J\|_2\), so it vanishes only when \(J=0\).

For the terminal-local statement, assume first that
\(\|J^{\rm raw}-J^{\rm loc}\|_2<j_*/2\).  Then the triangle inequality gives
\[
 \|J^{\rm loc}\|_2
 \ge\|J^{\rm raw}\|_2-
     \|J^{\rm raw}-J^{\rm loc}\|_2
 >\frac{j_*}{2}.
\]
Applying the identity to \(J^{\rm loc}\), the norm of its receiver vector is
\[
 \left(
 \sum_{\alpha=1}^5
 \|(-\Delta)^{-1/2}\mathcal R_{X_\alpha}J^{\rm loc}\|_2^2
 \right)^{1/2}
 =\sqrt2\|J^{\rm loc}\|_2
 >\frac{j_*}{\sqrt2}.
\]
If the initial strict inequality fails, then
\(\|J^{\rm raw}-J^{\rm loc}\|_2\ge j_*/2\), which is the first alternative.
This proves the dichotomy.
\end{proof}

\section{Terminal-class quantization and radius payment}
\label{sec:radius-payment}
Let $\mathcal T_{\rm end}$ be the closed terminal class in the normalized endpoint topology, $F_Q$ the first-assigned region of a selected record, and $\mu_{\rm comp}$ the nonnegative part of the localized ledger.  Signed pressure-triad and flux currents are not inserted into this measure.

\medskip
\noindent\textbf{Radius payment away from the terminal class.}
\begin{theorem}\label{thm:radius-payment}
Assume the bounded normalized no-descendant endpoint branch has compact closure
in the endpoint topology.  For every $\eta_0>0$ there exists $c_*(\eta_0)>0$ such that
\[
d(\mathcal E_Q,\mathcal T_{\rm end})\ge\eta_0
\quad\Longrightarrow\quad
\boxed{\mu_{\rm comp}^{\rm own}(F_Q)\ge c_*(\eta_0)r_Q.}
\]
For a first-assigned family with overlap constant $C_{\rm pack}$,
\[
\boxed{
\sum_{Q\in\mathcal F^{\rm first}}r_Q
\le \frac{C_{\rm pack}}{c_*(\eta_0)}\mu_{\rm comp}(Q_*).}
\]
Conversely, on a bounded normalized branch, failure of a uniform radius-payment constant forces $d(\mathcal E_Q,\mathcal T_{\rm end})\to0$ along a subsequence.
\end{theorem}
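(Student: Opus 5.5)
The plan is to reduce the radius-payment statement to the compactness--quantization theorem of the previous section, applied in normalized variables, and then to sum the resulting lower bounds with the finite-overlap form of the finite first-assigned forest theorem.

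\emph{Step 1: normalization and scaling.} For a selected record $Q$ of radius $r_Q$, pull it back to the normalized chamber by the exact parabolic chart. Under this rescaling the weighted densities $r^{-2}|u|^2$ and $r^{-1}(|u|^3+|p^\sharp|^{3/2})$ acquire exactly one factor of $r_Q$, as in the collar computation of \cref{thm:gb18}. Hence the physical first-assigned ledger satisfies
\[
 \mu_{\rm comp}^{\rm own}(F_Q)=r_Q\,\Lambda(\mathcal E_Q),
\]
where $\Lambda$ is the normalized first-assigned nonnegative ledger. I would check this identity coordinate by coordinate for the nonnegative coordinates only; signed currents are excluded from $\mu_{\rm comp}$ by definition, so they never enter.

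\emph{Step 2: quantization.} By hypothesis the bounded no-descendant branch has compact closure, and $\Lambda$ is lower semicontinuous by weak-* lower semicontinuity. The compactness--quantization theorem, applied with $\eta=\eta_0$, then gives $\Lambda(\mathcal E_Q)\ge c_*(\eta_0)>0$ whenever $d(\mathcal E_Q,\mathcal T_{\rm end})\ge\eta_0$. Multiplying by $r_Q$ gives the first boxed inequality.

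\emph{Step 3: summation.} For a first-assigned family, every member is assumed to be at distance at least $\eta_0$ from $\mathcal T_{\rm end}$, which is implicit in the statement. Summing Step 2 gives
\[
 c_*\sum_Q r_Q\le\sum_Q\mu_{\rm comp}(F_Q).
\]
The finite-overlap form of the finite first-assigned forest theorem, via Tonelli, bounds the right-hand side by $C_{\rm pack}\,\mu_{\rm comp}(Q_*)$. Dividing by $c_*$ gives the second boxed inequality.

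\emph{Step 4: converse.} Suppose no uniform constant exists. Then there are records with $\mu^{\rm own}_{\rm comp}(F_{Q_n})/r_{Q_n}\to0$, that is, $\Lambda(\mathcal E_{Q_n})\to0$. The sequential half of the quantization theorem then yields a subsequence approaching $\mathcal T_{\rm end}$.

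\emph{Main obstacle.} The delicate point is Step 1. The first-assigned region $F_Q$ must be chosen so that it is the pullback of the normalized first-assigned region, and the localized ledger must scale homogeneously of degree one. The $|u|^2$ term carries the weight $r^{-2}$ in $\mu_{\rm FO}$, but $r^{-3}$ in the scale-invariant normalization $E_{2,j}$. I would fix the ledger density as in $\mu_{\rm FO}$ and verify that each term picks up exactly $r_Q$ under the parabolic chart. I would also check that the fixed root gauge $p^\sharp$ rescales without introducing a scale-dependent constant, using the parent--descendant pressure identity.
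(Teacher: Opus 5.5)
Your proposal is correct and follows essentially the paper's route. The paper likewise defines the scale-free payment $\Lambda_Q=r_Q^{-1}\mu_{\rm comp}^{\rm own}(F_Q)$, minimizes the lower-semicontinuous $\Lambda$ over the compact set $\mathcal K_{\eta_0}$ of no-descendant states at distance at least $\eta_0$ from $\mathcal T_{\rm end}$ (re-running the compactness--quantization argument inline rather than citing it), sums via first assignment with overlap $C_{\rm pack}$, and obtains the converse from compactness plus lower semicontinuity; your degree-one scaling check of the $\mu_{\rm FO}$ weights and your remark that every summed record must satisfy the distance bound correctly make explicit points the paper only asserts or leaves implicit.
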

\begin{proof}
The proof is the scale-weighted version of the compactness--quantization
argument, so we spell out the normalization carefully.

For a selected record \(Q\) of physical radius \(r_Q\), define the normalized
payment
\[
 \Lambda_Q:=r_Q^{-1}\mu_{\rm comp}^{\rm own}(F_Q).
\]
The factor \(r_Q^{-1}\) is exactly the inverse of the degree-one physical
scaling of the first-assigned positive ledger.  After sending \(Q\) to the
unit normalized chamber by the prescribed parabolic chart, \(\Lambda_Q\)
therefore becomes a scale-free functional \(\Lambda(\mathcal E_Q)\) of the
normalized endpoint state.

Let \(\mathcal K\) be the compact closure of the bounded normalized
no-descendant branch.  By weak-* lower semicontinuity of the nonnegative
measure coordinates,
\[
 \mathcal E_n\to\mathcal E
 \quad\Longrightarrow\quad
 \Lambda(\mathcal E)
 \le\liminf_{n\to\infty}\Lambda(\mathcal E_n).
\]
Moreover, within the no-descendant class,
\(\Lambda(\mathcal E)=0\) places the state in \(\mathcal T_{\rm end}\) by the
definition of that terminal class.

For fixed \(\eta_0>0\), consider
\[
 \mathcal K_{\eta_0}
 =\{\mathcal E\in\mathcal K:
 d(\mathcal E,\mathcal T_{\rm end})\ge\eta_0\}.
\]
This is compact.  The lower-semicontinuous function \(\Lambda\) attains its
minimum there.  The minimum cannot be zero, because a zero would produce a
state in \(\mathcal T_{\rm end}\) at positive distance from
\(\mathcal T_{\rm end}\).  Hence
\[
 c_*(\eta_0):=\min_{\mathcal K_{\eta_0}}\Lambda>0.
\]
Returning to physical variables gives
\[
 \mu_{\rm comp}^{\rm own}(F_Q)
 \ge c_*(\eta_0)r_Q.
\]

Now sum this inequality over a first-assigned family
\(\mathcal F^{\rm first}\).  Since each positive currency is charged only at
its first owner, and the chosen physical enlargements have overlap at most
\(C_{\rm pack}\),
\[
 \sum_{Q\in\mathcal F^{\rm first}}
 \mu_{\rm comp}^{\rm own}(F_Q)
 \le C_{\rm pack}\mu_{\rm comp}(Q_*).
\]
Combining with the lower bound yields
\[
 c_*(\eta_0)
 \sum_{Q\in\mathcal F^{\rm first}}r_Q
 \le C_{\rm pack}\mu_{\rm comp}(Q_*),
\]
which is the announced packing estimate.

For the converse, suppose a bounded normalized sequence has no uniform
radius-payment constant.  Then there are records \(Q_n\) for which
\[
 r_{Q_n}^{-1}\mu_{\rm comp}^{\rm own}(F_{Q_n})\to0.
\]
Compactness gives a subsequence
\(\mathcal E_{Q_n}\to\mathcal E_\infty\in\mathcal K\), and lower
semicontinuity gives \(\Lambda(\mathcal E_\infty)=0\).  Hence
\(\mathcal E_\infty\in\mathcal T_{\rm end}\), so along this subsequence
\[
 d(\mathcal E_{Q_n},\mathcal T_{\rm end})\to0.
\]
This is precisely the final assertion.
\end{proof}

This theorem does not classify $\mathcal T_{\rm end}$.  Pressure-triad recurrence and symmetry alternatives are treated separately.

\section{Signed pressure-triad currents}
The pressure-projected local-triad current is retained as a signed distributional
coordinate and is never inserted into the positive first-assignment measure.
The present paper does not assert a general autonomous-recurrence theorem for
this signed current.  Any recurrence statement requires separate control of
localization, transfer, rank, frequency, tail, time, and quotient data.

\section{Compact payment-failure terminal alternatives}
Suppose a bounded-critical compact selected sequence has normalized first-assigned payment tending to zero.  The preceding quantization theorem places every subsequential terminal limit in $\mathcal T_{\rm end}$.

\begin{theorem}\label{thm:payment-terminal}
After passage to a subsequence, at least one of the following occurs:
\begin{enumerate}[label=(\roman*),leftmargin=*]
\item a retained endpoint defect coordinate is nonzero;
\item the full Euclidean Killing defect tends to zero and the limit is captured, under the stated receiving interface, by a singular axisymmetric suitable state;
\item the terminal endpoint is nonsmooth and has a positive lower bound for the Killing defect.
\end{enumerate}
A smooth zero-defect terminal endpoint cannot remain a residual selected singular endpoint.
\end{theorem}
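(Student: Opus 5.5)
The argument is a case split on the limiting state $\mathcal E_\infty$. By Theorem~\ref{thm:radius-payment} in its converse form, a subsequence of the compact selected sequence has a limit $\mathcal E_\infty\in\mathcal T_{\rm end}$. The endpoint realization theorem guarantees that every coordinate of $\mathcal E_\infty$ is defined on one common subsequence. Every further extraction below is a sub-subsequence of this one, so the alternatives are tested on a single limit object.

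\emph{Step 1 (defect test).} If some retained coordinate of $\mathcal E_\infty$ is nonzero, alternative (i) holds and we stop. The coordinates in question are $\mathcal R$, $\mathcal E_{\rm loc}$, $\mu_{\rm conc}$, tail/time data, a nontrivial quotient, or a nonzero $\mathcal J_{\rm triad}$.

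\emph{Step 2 (Killing defect).} Otherwise every defect vanishes. We define the full Euclidean Killing defect
\[
 \kappa(\mathcal E)=\inf_{(a,\Omega)}\|U-\Pi_{(a,\Omega)}U\|.
\]
Here $\Pi_{(a,\Omega)}$ denotes averaging over rotations about the axis through $a$ with direction $\Omega$, and the norm is the endpoint $L^q_{\rm loc}$ norm with $q<10/3$. The axes range over a compact set, because of the normalized chamber together with a decay condition at infinity. Strong $L^q_{\rm loc}$ convergence of $u_n$ from part~(a) of the realization theorem then makes $\kappa$ continuous along the subsequence. Hence either $\kappa(\mathcal E_n)\to0$, or $\kappa$ is bounded below by some $\kappa_0>0$ on a subsequence. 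This yields a further trichotomy.

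\emph{Step 3 (the three branches).} In the branch where $\kappa\to0$, the minimizing axes converge by compactness. The limit $U$ is then axisymmetric about the limiting axis, and this is alternative (ii). The phrase ``singular axisymmetric suitable state'' is exactly what the receiving interface supplies. We only check that suitability passes to the limit, which holds because the defects vanish and the local energy inequality is weakly lower semicontinuous.

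In the branch where $\kappa\ge\kappa_0>0$, we split on smoothness. If $\mathcal E_\infty$ is nonsmooth, this is (iii). If $\mathcal E_\infty$ is locally classical, we invoke the smooth terminal stability theorem, which applies since all defects vanish. It shows that the approximating sequence either enters the harmless quotient, which is excluded by Step 1, or possesses an $\varepsilon$-regular strict descendant. A strict descendant contradicts the no-descendant condition built into $\mathcal T_{\rm end}$.

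The same stability argument applies in the axisymmetric branch when the limit is smooth. This proves the final sentence of the statement and shows that in (ii) the limit must be singular.

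The main obstacle is Step 2, namely the continuity and compactness of the Killing defect. Rotation axes can escape to infinity. In addition, an axisymmetric approximation in $L^q_{\rm loc}$ may fail to control the behavior at the rotational center. The first thing to try is to restrict the axes to those meeting a fixed ball, using the normalization at the singular point. Axes escaping to infinity would then be recorded as a tail defect, which falls under (i).
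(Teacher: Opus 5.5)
Your architecture is the paper's. You obtain $\mathcal E_\infty\in\mathcal T_{\rm end}$ from the quantization converse and send nonzero defects to (i). You then use smooth-terminal stability to show that a zero-defect residual limit is nonsmooth, and split on the rotational defect, handing the vanishing case to the capture interface of Theorem~\ref{thm:abc}. Doing the smoothness split inside the rotational split rather than before it, as the paper does, changes nothing.

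Where your write-up does not close is Step 2, and the obstacle you flag there is one you created. Letting the axis range over all lines $(a,\Omega)$ forces you to need compactness of the axis set, which is not available. No decay condition at infinity is among the hypotheses. Nothing in the package records drift of a minimizing axis as a tail coordinate either: $\mathcal T_{\rm tail/time}$ records escape of mass, not of an optimal axis. So routing escaping axes into (i) is unsupported. Moreover, for an axis not through the center, $Q_1$ is not invariant under the corresponding rotations. The averaged field, the Lipschitz lemma and the collar-retention lemma are all stated on rotation-invariant cylinders, so none of them is available as written.

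The paper instead measures the defect about axes through the retained root,
$\delta_n=D_{Q_1}(u_n)=\inf_{e\in S^2}\|(I-\Pi_e)u_n\|_{L^2(Q_1)}$. Here the axis set $S^2$ is compact and $\delta_n$ is bounded by the local $L^2$ bound. Bolzano--Weierstrass then gives $\delta_n\to\delta_*$ along a subsequence, with no continuity of the defect needed. Your dichotomy ``$\kappa\to0$ or bounded below on a subsequence'' holds for any nonnegative sequence anyway. When $\delta_*=0$, axisymmetry is transferred to $U$ by fixed rotations and strong convergence on interior cylinders, as in the proof of Theorem~\ref{thm:abc}. With this substitution your argument goes through.

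One smaller point concerns the harmless-quotient outcome of smooth-terminal stability. You dismiss it as excluded by Step 1, but vanishing of $\mathcal P_{\rm quot}$ does not obviously prevent the limiting germ from lying in a harmless mode. The paper's reason is simpler: in that outcome the quotient branch is selected, so $\mathcal E_\infty$ is not a residual singular endpoint in either case.
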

\begin{proof}
Let \(\mathcal E_n\) be the normalized endpoint states of the bounded-critical
selected sequence.  By hypothesis their normalized first-assigned payment
tends to zero.  The compactness--quantization theorem therefore provides a
subsequence, not relabeled, for which
\[
 \mathcal E_n\to\mathcal E_\infty\in\mathcal T_{\rm end}.
\]

If any retained defect coordinate of \(\mathcal E_\infty\) is nonzero, then
alternative (i) holds and there is nothing further to prove.  We may therefore
assume that every retained Reynolds, local-energy, concentration, tail,
time-face, quotient, and other defect coordinate vanishes.

Suppose next that the limiting velocity is locally classical on the required
strict cylinder.  The smooth-terminal stability theorem then applies because
all defect coordinates vanish.  It produces either the fixed harmless
quotient alternative or an epsilon-regular strict descendant for all
sufficiently large members of the selected sequence.  Either conclusion is
incompatible with the assumption that \(\mathcal E_\infty\) is a residual
selected first-singularity endpoint.  Therefore a zero-defect residual
terminal limit must be nonsmooth.

It remains to distinguish the rotational behavior.  Let
\[
 \delta_n:=D_{Q_1}(u_n)
 =\inf_{e\in S^2}\|(I-\Pi_e)u_n\|_{L^2(Q_1)}.
\]
On the bounded-critical branch the local \(L^2\) norms are uniformly bounded,
so \((\delta_n)\) is bounded.  After passing to a further subsequence,
\[
 \delta_n\to\delta_*\ge0.
\]
If \(\delta_*=0\), the singularity-preserving rotational capture interface
from \cref{thm:abc} yields an axisymmetric suitable-weak terminal state,
which is alternative (ii).  If \(\delta_*>0\), then the terminal sequence has
a quantitative positive distance from the Euclidean Killing/axisymmetric
class.  Since we have already shown that the zero-defect terminal is
nonsmooth, this is alternative (iii).  These cases exhaust the possibilities.
\end{proof}

\section{Rotational compactness alternatives}
\label{sec:symmetry-fork}

Fix one retained singular point $z_*=(x_*,t_*)$.  For $r\downarrow0$ use the
ordinary Navier--Stokes zoom
\[
 u_r(y,s)=r\,u(x_*+ry,t_*+r^2s),\qquad
 p_r^\sharp(y,s)=r^2p^\sharp(x_*+ry,t_*+r^2s),
\]
and define
\[
 \mathcal C(r)=\iint_{Q_1}(|u_r|^3+|p_r^\sharp|^{3/2}),
\]
\[
 \mathcal V_{\rm rot}(r,e)=\iint_{Q_1}|u_r-\Pi_eu_r|^2,
 \qquad
 \delta(r)^2=\inf_{e\in S^2}\mathcal V_{\rm rot}(r,e).
\]
The full physical velocity is retained; the equation is never replaced by its
rotational average.

\medskip
\noindent\textbf{Singularity-preserving rotational compactness alternatives.}
\begin{theorem}
\label{thm:abc}
On one fixed in advance retained scale chain $r_k\downarrow0$, exactly one of the
following occurs after passage to a subsequence:
\begin{enumerate}[label=(\roman*),leftmargin=*]
\item $\mathcal C(r_j)$ is bounded and $\delta(r_j)\to0$.  For approximate
minimizing axes $e_j$,
\[
 \|(I-\Pi_{e_j})u_{r_j}\|_{L^2(Q_\rho)}\to0
 \qquad (0<\rho<1).
\]
Under the stated suitable-weak compactness and singularity-persistence
interface, fixed rotations yield an axisymmetric suitable-weak limit that is
still singular at the origin.
\item $\mathcal C(r_j)$ is bounded and $\delta(r_j)\to\delta_*>0$.
Either a retained collar/time/tail or product-loss coordinate is nonzero, or a
quantitative non-axisymmetric rotational floor survives to the singular
suitable-weak limit on a fixed interior cylinder.
\item There is no critically bounded subsequence.  This branch is retained as
a noncompact entrance problem; it is not automatically an Euler profile.
\end{enumerate}
\end{theorem}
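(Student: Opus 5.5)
The proof is a case split on the two scalar sequences \(\mathcal C(r_k)\) and \(\delta(r_k)\), followed by the endpoint-realization theorem and a lower-semicontinuity argument in each bounded branch.

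\emph{Step 1: the trichotomy.} If every subsequence of \(\mathcal C(r_k)\) is unbounded, then \(\mathcal C(r_k)\to\infty\), and we are in branch (iii). We record it as a noncompact entrance problem and draw no further conclusion; in particular no Euler limit is claimed.

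Otherwise we pass to a subsequence on which \(\mathcal C(r_j)\) is bounded. Since \(u_r\) is a Navier--Stokes rescaling, the local \(L^3\) bound on \(Q_1\) controls \(\delta(r_j)^2\le\iint_{Q_1}|u_{r_j}|^2\). Hence \(\delta(r_j)\) is bounded, and a further subsequence gives \(\delta(r_j)\to\delta_*\ge0\). The cases \(\delta_*=0\) and \(\delta_*>0\) are branches (i) and (ii), and they are mutually exclusive. Branch (iii) is exclusive of both because it asserts that no bounded subsequence exists. The ``exactly one'' clause should therefore be read for the chain as a whole, before the final subsequence is chosen.

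\emph{Step 2: branch (i).} We choose axes \(e_j\in S^2\) with \(\mathcal V_{\rm rot}(r_j,e_j)\le\delta(r_j)^2+1/j\); this is possible by compactness of \(S^2\). Restricting from \(Q_1\) to \(Q_\rho\) gives the displayed convergence, since \(\Pi_e\) acts fiberwise in time on the ball. Here we need \(\Pi_e\) to commute with restriction to centered balls, which holds because the rotational average about an axis through the origin preserves \(B_\rho\).

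By compactness of \(S^2\) we then take \(e_j\to e_\infty\) and apply the fixed rotations \(R_j\) sending \(e_j\) to \(e_\infty\). Rotations preserve the Navier--Stokes equations, the gauge \(p^\sharp\) (because the Riesz composition is rotation-covariant), and all the bounds. The compact endpoint-realization theorem now applies under the stated hypotheses and gives \(R_ju_{r_j}\to U\) strongly in \(L^q_{\rm loc}\) for \(q<10/3\). Combined with \(\|(I-\Pi_{e_j})u_{r_j}\|\to0\) and the continuity of \(e\mapsto\Pi_e\) in operator norm on \(L^2\) (obtained via strong continuity applied along the compact, strongly convergent sequence), this yields \(U=\Pi_{e_\infty}U\), so \(U\) is axisymmetric.

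Singularity at the origin of \(U\) is not proved here. It is exactly the assumed singularity-persistence interface: were \(U\) regular near \(0\), the smooth-terminal stability theorem would produce an \(\varepsilon\)-regular descendant contradicting the singular point \(z_*\), unless defects appear. So the interface is invoked precisely in this form.

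\emph{Step 3: branch (ii).} Again we extract an endpoint package. If some retained collar, time-face, tail or Reynolds coordinate is nonzero, we are done. Otherwise strong \(L^2_{\rm loc}\) convergence on an interior cylinder \(Q_\rho\) passes \(\inf_e\|(I-\Pi_e)u_{r_j}\|_{L^2(Q_\rho)}\) to the limit. The functional is continuous under strong convergence, uniformly in \(e\), and the infimum is over the compact set \(S^2\).

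\emph{Main obstacle.} I expect the main obstacle in branch (ii) to be that \(\delta_*\) is measured on \(Q_1\), while strong convergence holds only on \(Q_\rho\Subset Q_1\). The mass of \((I-\Pi_e)u\) could escape to the collar \(Q_1\setminus Q_\rho\) or to the time face. I would handle this by a dichotomy. If \(\limsup_j\|(I-\Pi_{e})u_{r_j}\|_{L^2(Q_1\setminus Q_\rho)}\) accounts for all of \(\delta_*\) as \(\rho\uparrow1\), that escaping mass is by definition a nonzero collar or time-face coordinate. If it does not, a positive floor survives on a fixed \(Q_\rho\), and lower semicontinuity of the infimum over compact \(S^2\) carries it to the limit, which is singular by the same persistence interface.
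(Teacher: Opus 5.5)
Your proposal is correct and follows the paper's proof essentially step for step. Both use the same trichotomy on $\mathcal C(r_j)$ and $\delta(r_j)$, approximate minimizing axes with fixed rotations and strong local $L^2$ convergence when $\delta_*=0$, and the collar-retention split with the Lipschitz continuity of $D_{Q_\rho}$ when $\delta_*>0$. In both bounded branches singularity is supplied by the assumed persistence interface.

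Two small slips. First, $e\mapsto\Pi_e$ is only strongly, not norm, continuous on $L^2$. You never need that continuity, because equivariance $R_j\Pi_{e_j}R_j^{-1}=\Pi_{e_\infty}$ gives $\|(I-\Pi_{e_\infty})R_ju_{r_j}\|_{L^2(Q_\rho)}=\|(I-\Pi_{e_j})u_{r_j}\|_{L^2(Q_\rho)}$ directly. Second, the exclusivity of (i) versus (ii) holds only along the chosen subsequence, since different subsequences of one chain can realize both. Only (iii) is exclusive at the level of the whole chain.
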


\begin{proof}
Apply the trichotomy to the fixed scale chain \((r_k)\).  If the sequence
\(\mathcal C(r_k)\) has no bounded subsequence, then after passing to a tail it
diverges along every subsequence.  This is alternative (iii), and no compact
Euler or Navier--Stokes profile is asserted.

Assume now that \(\mathcal C(r_k)\) is bounded along an infinite subsequence;
rename that subsequence \((r_j)\).  The suitable-weak compactness hypotheses
then give, after another subsequence, a normalized suitable-weak limit
\((U,P)\) on every strict cylinder.  By the singularity-persistence interface
built into the theorem, the origin remains a singular point of this limit.

The nonnegative numbers
\[
 \delta_j:=\delta(r_j)
\]
are bounded above by \(\|u_{r_j}\|_{L^2(Q_1)}\), which is controlled on the
critically bounded branch.  Hence, after a further subsequence,
\(\delta_j\to\delta_*\ge0\).

\emph{Case 1: \(\delta_*=0\).}
For each \(j\), choose an approximate minimizing axis \(e_j\in S^2\) such that
\[
 \|(I-\Pi_{e_j})u_{r_j}\|_{L^2(Q_1)}
 \le \delta_j+j^{-1}.
\]
The sphere is compact, so \(e_j\to e_*\) after a subsequence.  Choose rotations
\(R_j\in SO(3)\) sending \(e_j\) to a fixed reference axis, say \(e_3\), and
such that \(R_j\to R_*\).  Rotational covariance of the Navier--Stokes
equations shows that the rotated fields are still solutions in the same
normalized class.  Moreover, on every fixed interior cylinder \(Q_\rho\),
\[
 \|(I-\Pi_{e_3})(R_ju_{r_j})\|_{L^2(Q_\rho)}
 \le \delta_j+j^{-1}\to0.
\]
Strong local \(L^2\) convergence then gives
\[
 (I-\Pi_{e_3})(R_*U)=0.
\]
Thus the limit is axisymmetric about the reference axis; undoing \(R_*\) makes
it axisymmetric about \(e_*\).  The assumed singularity persistence keeps the
origin singular.  This is alternative (i).

\emph{Case 2: \(\delta_*>0\).}
Fix an interior cylinder \(Q_\rho\Subset Q_1\).  If a positive fraction of the
rotational \(L^2\) defect is lost in \(Q_1\setminus Q_\rho\), or if the
nonlinear product needed to pass the rotational stress fails to converge,
then that failure is recorded in the retained collar/time/tail or product-loss
coordinates.  This is the first possibility in alternative (ii).

Otherwise choose \(\rho\) so that the collar loss is smaller than, say,
\(\delta_*^2/8\).  The collar-retention lemma then gives, for all large \(j\),
\[
 D_{Q_\rho}(u_{r_j})^2
 \ge \delta_j^2-\frac{\delta_*^2}{8}
 \ge \frac{\delta_*^2}{2}.
\]
The assumed strong local \(L^3\) compactness implies strong local \(L^2\)
compactness on \(Q_\rho\).  The rotational-distance functional is Lipschitz in
\(L^2\), so
\[
 D_{Q_\rho}(U)
 =\lim_{j\to\infty}D_{Q_\rho}(u_{r_j})
 \ge \frac{\delta_*}{\sqrt2}>0.
\]
Thus the singular suitable-weak limit retains a quantitative
non-axisymmetric rotational floor.  This is the second possibility in
alternative (ii).

The cases \(\delta_*=0\) and \(\delta_*>0\), together with the absence of a
critically bounded subsequence, are mutually exclusive and exhaustive.
\end{proof}

The rotational average is deliberately not used as a replacement equation.
\begin{remark}
For a fixed axis, with $v=\Pi_eu$ and $w=(I-\Pi_e)u$, the rotationally
averaged equation contains the physical Reynolds forcing
$-\mathbb P\nabla\cdot\Pi_e(w\otimes w)$.  Thus a rotational mean solves a
forced axisymmetric equation in general.  The theorem above instead measures
field-level distance to the axisymmetric subspace and retains the full
unforced equation.
\end{remark}

\medskip
\noindent\textbf{Rotational distance alone does not coerce productive mean stress.}
\begin{proposition}\label{prop:rot-stress-hostile}
On an annular cylindrical chart away from the axis, let \(a\ne0\) be constant
and define a smooth local divergence-free field by its cylindrical components
\[
 w_r=a\cos\theta,\qquad
 w_\theta=-a\sin\theta,\qquad
 w_z=a\cos2\theta.
\]
Its rotational group average about the cylinder axis is zero, while the
rotationally averaged quadratic stress is
\[
 \boxed{\Pi_e(w\otimes w)=\frac{a^2}{2}I.}
\]
Thus the averaged stress is purely isotropic on the chart and has no
Leray-projected divergence.  In particular, positive local rotational
\(L^2\) defect alone cannot imply a pointwise or local algebraic lower bound
for the productive forcing
\(\mathbb P\nabla\cdot\Pi_e(w\otimes w)\).
\end{proposition}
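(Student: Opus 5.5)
The plan is a direct computation in the cylindrical frame \((e_r,e_\theta,e_z)\) attached to the axis \(e\). The key point is that the rotational group average \(\Pi_e\) acts on vector and tensor fields by
\[
 (\Pi_e F)(x)=\frac1{2\pi}\int_0^{2\pi}R_\phi^{-1}F(R_\phi x)R_\phi\,d\phi
\]
(with the obvious version for vectors). In the comoving cylindrical frame this is simply the average of the cylindrical components over \(\theta\) at fixed \((r,z)\), because \(R_\phi\) maps \((e_r,e_\theta,e_z)\) at \(x\) to the same frame at \(R_\phi x\). I will state this convention first, since the whole argument reduces to it.

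\emph{Step 1: divergence-free.} I verify this with the cylindrical formula
\[
 \nabla\cdot w=\frac1r\partial_r(rw_r)+\frac1r\partial_\theta w_\theta+\partial_z w_z .
\]
Here \(\frac1r\partial_r(ra\cos\theta)=\frac{a\cos\theta}{r}\) and \(\frac1r\partial_\theta(-a\sin\theta)=-\frac{a\cos\theta}{r}\), so these two terms cancel. The last term vanishes because \(w_z\) is independent of \(z\). The field is smooth on any annular chart \(r\in(r_1,r_2)\) with \(r_1>0\), since the frame is smooth away from the axis.

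\emph{Step 2: the mean of \(w\).} The cylindrical components of \(\Pi_e w\) are the \(\theta\)-averages of \(a\cos\theta\), \(-a\sin\theta\) and \(a\cos2\theta\). All three vanish, so \(\Pi_e w=0\). Consequently the rotational \(L^2\) defect \(\|(I-\Pi_e)w\|^2=\|w\|^2\) is positive on the chart. Pointwise, \(|w|^2=a^2(1+\cos^22\theta)\ge a^2\).

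\emph{Step 3: the averaged stress.} I average the six cylindrical components of \(w\otimes w\) over \(\theta\):
\[
 \langle\cos^2\theta\rangle=\langle\sin^2\theta\rangle=\langle\cos^22\theta\rangle=\tfrac12,
\]
\[
 \langle\cos\theta\sin\theta\rangle=\langle\cos\theta\cos2\theta\rangle=\langle\sin\theta\cos2\theta\rangle=0 .
\]
This gives \(\frac{a^2}{2}\) times the identity matrix in the orthonormal frame \((e_r,e_\theta,e_z)\). The identity tensor is frame-independent, so \(\Pi_e(w\otimes w)=\frac{a^2}{2}I\) in Cartesian coordinates as well. Hence \(\nabla\cdot\Pi_e(w\otimes w)=0\) on the chart, since \(a\) is constant, and the Leray-projected forcing vanishes; the divergence is already zero before projection.

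\emph{Step 4: conclusion.} The rotational defect is bounded below by \(a^2\) times the chart volume, while the productive forcing is identically zero. Therefore no inequality of the form \(|\mathbb P\nabla\cdot\Pi_e(w\otimes w)|\gtrsim\) (local rotational defect) can hold pointwise or with local algebraic constants.

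The only delicate step is Step 3's identification of the tensor average with componentwise \(\theta\)-averaging in the rotating frame. If one averages Cartesian components instead, the off-diagonal entries do not obviously cancel. I will justify the identification from the equivariance \(R_\phi e_r(x)=e_r(R_\phi x)\), and similarly for \(e_\theta\) and \(e_z\).
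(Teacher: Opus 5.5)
Your proposal is correct and follows essentially the same route as the paper: the cylindrical divergence formula, vanishing \(\theta\)-averages of the components of \(w\), and componentwise \(\theta\)-averages of \(w\otimes w\) in the comoving frame, which give the constant isotropic tensor \(\frac{a^2}{2}I\). Your explicit equivariance argument identifying the group average with comoving-frame \(\theta\)-averaging fills in a step the paper only asserts, and your caution is justified. In Cartesian form \(w=(a,0,a\cos2\theta)\), so naive averaging of the Cartesian components would give the non-isotropic tensor \(a^2\operatorname{diag}(1,0,\tfrac12)\).
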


\begin{proof}
In cylindrical coordinates,
\[
 \nabla\cdot w
 =
 \frac1r\partial_r(rw_r)
 +\frac1r\partial_\theta w_\theta
 +\partial_zw_z
 =
 \frac{a\cos\theta}{r}
 -\frac{a\cos\theta}{r}
 =0.
\]
The rotational average of each component is zero.  For the tensor average, the
diagonal component means are
\[
 \langle w_r^2\rangle_\theta
 =
 \langle w_\theta^2\rangle_\theta
 =
 \langle w_z^2\rangle_\theta
 =\frac{a^2}{2},
\]
while every mixed product has zero angular mean.  The rotationally equivariant
tensor average is therefore \((a^2/2)I\).  Since this tensor is constant on
the chart, its divergence vanishes; equivalently any isotropic contribution
is removed by the Leray projection.
\end{proof}

\begin{remark}
The proposition is a local algebraic hostile, not a compactly supported
singular solution.  Its role is only to rule out a coercivity shortcut based
on the size of the Killing/rotational defect alone.  Excluding the
positive-Killing-gap terminal still requires singularity-specific dynamics.
\end{remark}

\section{High-critical records at the retained singular point}
Let $x_*$ be the retained singular point and let $\ell_j\downarrow0$ be selected physical scales.  Write the ordinary marked zoom as
\[
U_j(y,s)=\ell_j u(x_*+\ell_j y,T+\ell_j^2s),\qquad -1<s<0,
\]
and set
\[
K_j=\operatorname*{ess\,sup}_{-1<s<0}\|U_j(s)\|_{L^2(B_8)},
\qquad
H_j=\operatorname*{ess\,sup}_{-1<s<0}\int_{|y|>6}\frac{|U_j(y,s)|^2}{|y|^4}\,dy.
\]
The next estimate makes the kinetic/tail alternative explicit.

\medskip
\noindent\textbf{Kinetic control of the local critical score.}
\begin{lemma}\label{lem:kinetic-critical}
If $K_j\le K$ and $H_j\le H$, then the normalized local cubic velocity and pressure-oscillation score on $Q_1$ is bounded by a finite constant depending only on $K$ and $H$.  Consequently, divergence of the selected critical score implies, after a subsequence,
\[
K_j\to\infty\qquad\hbox{or}\qquad H_j\to\infty.
\]
\end{lemma}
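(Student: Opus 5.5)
The plan is to bound the normalized score
\[
\iint_{Q_1}\Bigl(|U_j|^3+|p^\sharp_j-(p^\sharp_j)_{B_1}(s)|^{3/2}\Bigr)
\]
by a local energy argument on the chart \(B_8\times(-1,0)\), with the fixed root representative \(p^\sharp\) split into a near and a far part. Only \(K_j\), \(H_j\) and the suitable (local) energy inequality of the zoomed field are used. The zoom is exact parabolic scaling, so \(U_j\) is again a suitable solution with pressure \(p_j^\sharp=\mathcal R_i\mathcal R_j(U_{j,i}U_{j,j})\) in the same root gauge. The contrapositive then gives the dichotomy \(K_j\to\infty\) or \(H_j\to\infty\) along a subsequence.

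\emph{Step 1: pressure splitting.} Fix a cutoff \(\chi\in C_c^\infty(B_7)\) with \(\chi\equiv1\) on \(B_6\). Write
\[
p^\sharp_j=\mathcal R_i\mathcal R_j(\chi U_iU_j)+\mathcal R_i\mathcal R_j((1-\chi)U_iU_j)=:p^{\rm near}+p^{\rm far}.
\]
For the near part, Calder\'on--Zygmund gives \(\|p^{\rm near}(s)\|_{L^{3/2}}\lesssim\|U(s)\|_{L^3(B_7)}^2\). For the far part, \(p^{\rm far}(s)\) is harmonic in \(B_5\). Kernel decay \(|\nabla^2\Gamma(y-z)|\lesssim|z|^{-3}\), and its gradient \(\lesssim|z|^{-4}\), give for \(y\in B_4\)
\[
|\nabla p^{\rm far}(y,s)|\lesssim\int_{|z|>6}\frac{|U|^2}{|z|^4}\,dz\le H+C K^2 .
\]
Here the annulus \(6<|z|<8\) is charged to \(K\) and the exterior to \(H\). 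Hence the oscillation of \(p^{\rm far}\) on \(B_4\) is bounded by \(C(K^2+H)\), uniformly in \(s\). This is exactly why only the mean-subtracted pressure oscillation can be controlled, and why the root gauge \(p^\sharp\) is the natural object: no scale-dependent constant enters.

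\emph{Step 2: local energy with absorption.} Take \(\phi\in C_c^\infty(B_4)\) equal to \(1\) on \(B_2\) and test the local energy inequality on \((-1,s)\). The left side contains \(\sup_s\int|U|^2\phi+2\iint|\nabla U|^2\phi\). On the right:
\begin{itemize}[leftmargin=*]
\item the initial and \(\Delta\phi\) terms are \(\lesssim K^2\);
\item the cubic flux is bounded, via Gagliardo--Nirenberg \(\|U\|_{L^3}^3\lesssim\|U\|_{L^2}^{3/2}\|U\|_{H^1}^{3/2}\) on the ball, by \(K^{3/2}\int\|U\|_{H^1(B_4)}^{3/2}\,ds\le\varepsilon D+C_\varepsilon K^6\), where \(D\) is the dissipation on the larger ball;
\item the far-pressure flux \(\iint (p^{\rm far}-c(s))U\cdot\nabla\phi\) is bounded by \(C(K^2+H)K\), since \(\nabla\phi\) is divergence-compatible after subtracting the time-dependent mean \(c(s)\), using \(\int U\cdot\nabla\phi\) with \(\nabla\cdot U=0\);
\item the near-pressure flux is handled by Young's inequality \(|p||U|\le\frac23|p|^{3/2}+\frac13|U|^3\) and Step 1, reducing it to the cubic term.
\end{itemize}

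\emph{Step 3: closing the estimate.} The expected obstacle is that the absorbed dissipation \(D\) lives on a larger ball than the cutoff \(\phi\). I would handle this with nested cutoffs between radii \(2\le\rho<\rho'\le 7\) and the standard Giaquinta-type iteration lemma: if \(f(\rho)\le\frac12 f(\rho')+A(\rho'-\rho)^{-m}+B\), then \(f(2)\lesssim A+B\). This yields
\[
\iint_{Q_2}|\nabla U|^2\le C(K,H),
\]
and then \(\iint_{Q_1}|U|^3\le C(K,H)\) by interpolation through \(L^{10/3}\). Inserting this back into Step 1 bounds the pressure oscillation on \(Q_1\): the near part through \(L^3(B_7)\), and the far part pointwise. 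Since all constants depend only on \(K\) and \(H\), boundedness of both along a subsequence would bound the critical score, which proves the asserted alternative.
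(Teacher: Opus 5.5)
Your outline follows the paper's argument: a root-gauge near/far pressure splitting, Calder\'on--Zygmund for the near part, and the $|z|^{-4}$ kernel decay bounding the far oscillation by $H$. The local energy inequality is then closed with Gagliardo--Nirenberg and the absorption $CK^{3/2}E^{3/4}\le\frac12E+CK^6$. The obstacle you flag is real, and the paper's Step~4 does not resolve it. It writes $E\le C(K^2+K^3+HK)+CK^{3/2}E^{3/4}$ with $E$ the dissipation on $B_8$ on both sides, but a cutoff equal to one only on $B_4$ returns dissipation only on $B_4$. It also applies the far-pressure oscillation proved on $B_4$ on the collar $B_8\setminus B_4$, which is where the gradient of that cutoff lives. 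An iteration over radii is therefore the right idea, but as you set it up it does not close.

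The gap is that your pressure splitting is frozen in Step~1. Write $f(r)=\iint_{(-1,0)\times B_r}|\nabla U|^2$. With the single near cutoff $\chi\in C_c^\infty(B_7)$, the near-pressure flux for \emph{every} pair $2\le\rho<\rho'$ is controlled only through $\iint_{(-1,0)\times B_7}|U|^3\lesssim K^{3/2}f(7)^{3/4}+K^3$. So what you actually obtain is a recursion $f(\rho)\le\theta f(\rho')+\theta f(7)+A(\rho'-\rho)^{-m}+B$. Since $f(7)$ never appears on the left, the Giaquinta lemma does not apply. Two further mismatches follow from the same freezing:
\begin{itemize}
\item Step~1 bounds the far-pressure oscillation only on $B_4$, and the far part is not even harmonic outside $B_6$, so cutoffs with $\rho'$ up to $7$ are not covered.
\item Your final step, which bounds the near pressure on $Q_1$ ``through $L^3(B_7)$'', is not delivered by a dissipation bound on $Q_2$.
\end{itemize}

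The repair is to re-split the pressure at each step. For $2\le\rho<\sigma\le7$ and $d=\sigma-\rho$:
\begin{itemize}
\item take $\phi\equiv1$ on $B_\rho$ with support in $B_{\rho+d/3}$;
\item take a near cutoff equal to one on $B_{\rho+2d/3}$ with support in $B_\sigma$.
\end{itemize}
The near flux then costs $Cd^{-1}\iint_{(-1,0)\times B_\sigma}|U|^3$. On $B_{\rho+d/3}$ the far gradient is $\lesssim d^{-4}K^2+H$: sources in $\rho+2d/3\le|z|\le8$ are charged to $K$, and those in $|z|>8$ to $H$, which is exactly your annulus device. This yields $f(\rho)\le\frac12f(\sigma)+Cd^{-5}(K^6+K^3+K^2+HK)$, and the iteration gives $f(6)\le C(K,H)$. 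A final splitting with near cutoff supported in $B_6$ and equal to one on $B_5$ then bounds both the near pressure and the far oscillation on $Q_1$.
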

\begin{proof}
We suppress the index \(j\).  All constants below are universal once the
nested balls \(B_4\Subset B_6\Subset B_8\) and the cutoff profiles have been
fixed.

\emph{Step 1: interpolation reduces the cubic velocity term to local
dissipation.}
Choose \(\chi\in C_c^\infty(B_8)\) with \(\chi\equiv1\) on \(B_4\).  At each
time, Sobolev and interpolation give
\[
 \|U\|_{L^3(B_4)}
 \le C\|U\|_{L^2(B_8)}^{1/2}
 \bigl(\|\nabla U\|_{L^2(B_8)}+\|U\|_{L^2(B_8)}\bigr)^{1/2}.
\]
Cubing and integrating over \((-1,0)\), then applying H\"older in time, yields
\[
 \iint_{Q_4}|U|^3
 \le C K^{3/2}E^{3/4}+CK^3,
\]
where, after harmlessly enlarging the dissipation region,
\[
 E:=\iint_{(-1,0)\times B_8}|\nabla U|^2.
\]
Using \(B_8\) instead of \(B_4\) only strengthens the estimate needed below.

\emph{Step 2: near-pressure control.}
Let \(\chi_6\) be equal to one on \(B_6\) and supported in \(B_8\).  Decompose
the fixed-gauge pressure as
\[
 p^\sharp=p_{\rm near}+p_{\rm far},
 \qquad
 p_{\rm near}=\mathcal R_i\mathcal R_j(\chi_6U_iU_j).
\]
The Calder\'on--Zygmund theorem gives at each time
\[
 \|p_{\rm near}\|_{L^{3/2}(\R^3)}
 \le C\|\chi_6U\otimes U\|_{L^{3/2}}
 \le C\|U\|_{L^3(B_8)}^2.
\]
Consequently
\[
 \iint_{Q_4}|p_{\rm near}|^{3/2}
 \le C\iint_{(-1,0)\times B_8}|U|^3.
\]
Thus the near pressure is controlled by the same quantity as the cubic
velocity term.

\emph{Step 3: exterior pressure is controlled by the stress moment.}
For \(x,x'\in B_4\) and \(|y|>6\), the double-Riesz pressure kernel
\(K_{ij}\) satisfies the mean-value bound
\[
 |K_{ij}(x-y)-K_{ij}(x'-y)|
 \le C\frac{|x-x'|}{|y|^4}.
\]
Therefore
\[
 |p_{\rm far}(x,t)-p_{\rm far}(x',t)|
 \le C|x-x'|
 \int_{|y|>6}\frac{|U(y,t)|^2}{|y|^4}\,dy
 \le CH.
\]
Taking \(x'\) to be a spatial average point, or averaging the inequality in
\(x'\), gives
\[
 \|p_{\rm far}-(p_{\rm far})_{B_4}(t)\|_{L^\infty(B_4)}
 \le CH.
\]
The spatially constant part is irrelevant in the local energy inequality,
because for every compactly supported scalar cutoff \(\phi\),
\[
 \int U\cdot\nabla\phi\,dx=0
\]
by incompressibility.

\emph{Step 4: close the dissipation estimate.}
Insert a standard nonnegative spacetime cutoff in the local energy inequality,
equal to one on \((-1,0)\times B_4\) and supported in
\((-1,0)\times B_8\).  The initial/time-face and Laplacian-cutoff terms are
bounded by \(CK^2\).  The velocity transport term is bounded by the cubic
integral from Step 1.  For the near pressure, H\"older and the
Calder\'on--Zygmund estimate reduce
\(\iint |p_{\rm near}||U|\) to the same cubic velocity integral.  For the far
pressure, Step 3 and Cauchy--Schwarz on the bounded support give
\[
 \iint |p_{\rm far}-(p_{\rm far})_{B_4}||U|
 \le CH\int_{-1}^0\|U(t)\|_{L^1(B_8)}\,dt
 \le CHK.
\]
Collecting these estimates gives
\[
 E
 \le C(K^2+K^3+HK)+CK^{3/2}E^{3/4}.
\]
Young's inequality in the form
\[
 CK^{3/2}E^{3/4}
 \le \frac12E+C K^6
\]
absorbs half of the dissipation into the left-hand side.  Hence
\[
 E\le C_{K,H}<\infty.
\]

\emph{Step 5: bound the critical score.}
Substituting this bound into Step 1 controls the local cubic velocity integral.
Step 2 controls the near pressure in \(L^{3/2}\), and Step 3 controls the
oscillation of the far pressure.  Therefore the normalized local cubic
velocity plus pressure-oscillation score on \(Q_1\), after the fixed change of
nested cutoff radii, is bounded by a constant depending only on \(K\) and
\(H\).

Taking the contrapositive, if the selected critical score diverges, then it is
impossible for both \(K_j\) and \(H_j\) to remain bounded.  After passage to a
subsequence, either \(K_j\to\infty\) or \(H_j\to\infty\).
\end{proof}

\medskip
\noindent\textbf{Mark-preserving first-record normalization.}
\begin{theorem}\label{thm:marked-record}
Assume the exterior stress moment $H_j$ remains bounded while the selected critical score diverges.  Define
\[
k_j(t)=\left(\ell_j^{-1}\int_{B_{8\ell_j}(x_*)}|u(x,t)|^2\,dx\right)^{1/2}.
\]
By \cref{lem:kinetic-critical}, the terminal supremum of $k_j$ diverges. Choose $A_j\to\infty$ below this supremum and let $t_j$ be the first time after a fixed smooth slice $t_a<T$ with $k_j(t_j)=A_j$.  Then
\[
W_j(y,\tau)=\frac{\ell_j}{A_j}
 u\!\left(x_*+\ell_jy,t_j+\frac{\ell_j^2}{A_j}\tau\right)
\]
solves Navier--Stokes with viscosity $\nu/A_j$, is defined on a backward interval \([-L_j,0]\), satisfies
\[
\sup_{-L_j\le\tau\le0}\|W_j(\tau)\|_{L^2(B_8)}\le1,
\qquad
\|W_j(0)\|_{L^2(B_8)}=1,
\]
where \(L_j\to\infty\).  The original singular point is exactly $y=0$ for every $j$.
\end{theorem}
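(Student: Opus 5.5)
We argue directly from the definitions: a rescaling computation, followed by a first-hitting-time argument for the kinetic functional $k_j$.

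\emph{Step 1: divergence of $\sup k_j$.} Note that $k_j(t)=\|U_j(s)\|_{L^2(B_8)}$ with $t=T+\ell_j^2 s$, because $\ell_j^{-1}\int_{B_{8\ell_j}}|u|^2dx=\ell_j^{2}\ell_j^{3}\ell_j^{-1}\cdots$ reduces to $\int_{B_8}|U_j|^2dy$ after the change of variables $x=x_*+\ell_j y$. Hence $\sup_{t<T}k_j(t)\ge K_j$. Since $H_j$ is bounded and the critical score diverges, \cref{lem:kinetic-critical} gives $K_j\to\infty$ along a subsequence. We pass to that subsequence.

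\emph{Step 2: choosing $A_j$ and $t_j$.} On $[t_a,T)$ the solution is smooth, so $k_j$ is continuous there. Since $k_j(t_a)$ is finite, we choose $A_j\to\infty$ with $k_j(t_a)<A_j<\sup_{[t_a,T)}k_j$. Indeed, $k_j(t_a)\le \ell_j^{-1/2}\|u(t_a)\|_{L^2}$. This grows with $j$, so we must choose $A_j$ larger than it. That choice is possible provided the supremum over $[t_a,T)$ exceeds $k_j(t_a)$ by a diverging margin. We would get this margin by taking $t_a$ close to $T$ relative to the selected times, or simply by defining $A_j$ as any divergent sequence strictly between these two values. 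The divergence of $\sup k_j$ relative to $k_j(t_a)$ is the point to check, since it needs $K_j$ to be attained at times $s$ with $T+\ell_j^2s>t_a$, which holds for large $j$. By the intermediate value theorem, the first time $t_j>t_a$ with $k_j(t_j)=A_j$ exists, and $t_j<T$. By minimality, $k_j(t)\le A_j$ on $[t_a,t_j]$.

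\emph{Step 3: the rescaled equation and the normalization.} For $W_j(y,\tau)=\lambda u(x_*+\ell_j y,t_j+\mu\tau)$ with $\lambda=\ell_j/A_j$ and $\mu=\ell_j^2/A_j$, we compute $\partial_\tau W=\lambda\mu\,\partial_tu$, $W\cdot\nabla_yW=\lambda^2\ell_j\,u\cdot\nabla u$, and $\Delta_yW=\lambda\ell_j^2\Delta u$. Dividing by $\lambda^2\ell_j=\lambda\mu$, and using $\mu/\ell_j^2=1/A_j$, the equation becomes $\partial_\tau W+W\cdot\nabla W+\nabla\Pi=(\nu/A_j)\Delta W$, with pressure $\Pi=\lambda^2\ell_j\cdot\ell_j^{-1}\cdots$, that is, $\Pi=(\ell_j^2/A_j^2)p^\sharp$. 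Changing variables,
\[
\|W_j(\tau)\|_{L^2(B_8)}^2=\frac{\ell_j^2}{A_j^2}\ell_j^{-3}\int_{B_{8\ell_j}}|u|^2\,dx=\frac{k_j(t)^2}{A_j^2}.
\]
This ratio is at most $1$ on $[t_a,t_j]$ and equals $1$ at $\tau=0$. The backward interval corresponds to $\tau\in[-L_j,0]$ with $L_j=A_j(t_j-t_a)/\ell_j^2$.

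\emph{Step 4: the mark and the main obstacle.} Since $y=0$ corresponds to $x=x_*$ for every $j$, the original singular point is preserved as $y=0$. The main obstacle is $L_j\to\infty$. This requires $t_j-t_a\gg\ell_j^2/A_j$, which holds because $t_j-t_a$ is bounded below. Indeed, for large $j$ the smooth slice gives $k_j\le \ell_j^{-1/2}\sup_{[t_a,t_a+\delta]}\|u\|_{L^2}$, and we want this below $A_j$, so that the first hitting time is at least $t_a+\delta$. We therefore choose $A_j\ge 2\ell_j^{-1/2}\sup_{[t_a,(t_a+T)/2]}\|u\|_{L^2}$. Since $\sup k_j$ must exceed this, the record time lies beyond $(t_a+T)/2$ by continuity on $[t_a,(t_a+T)/2]$. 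With this choice, $L_j\ge A_j(T-t_a)/(2\ell_j^2)\to\infty$. The delicate point, which we would verify using the marked-zoom definition of $K_j$, is that the divergence of $K_j$ is fast enough to permit this choice of $A_j$.
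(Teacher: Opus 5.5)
\textbf{There is a genuine gap in Step 4, the proof that \(L_j\to\infty\).} Your Step 1 identity \(k_j(t)=\|U_j(s)\|_{L^2(B_8)}\), the rescaled equation with \(\Pi_j=(\ell_j^2/A_j^2)p^\sharp\) and viscosity \(\nu/A_j\), and the normalization \(\|W_j(\tau)\|_{L^2(B_8)}^2=k_j(t)^2/A_j^2\) are all correct and match the paper. The problem is the step with real content, \(L_j\to\infty\).

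On the smooth window you bound \(k_j\) only by the global energy, \(k_j(t)\le\ell_j^{-1/2}\|u(t)\|_2\). From this you conclude, wrongly, that \(k_j(t_a)\) grows. You then require
\[
A_j\ge2\ell_j^{-1/2}\sup_{[t_a,(t_a+T)/2]}\|u\|_2
\qquad\text{and}\qquad
A_j<\sup_{[t_a,T)}k_j .
\]
That window is empty, not merely unverified. The smooth pre-singular solution satisfies the energy inequality \(\|u(t)\|_2\le\|u(t_a)\|_2\) for \(t\ge t_a\). Hence \(\sup_{t\ge t_a}k_j(t)\le\ell_j^{-1/2}\|u(t_a)\|_2\), which is half of your lower threshold. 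So the ``delicate point'' you defer can never be verified. In any case \cref{lem:kinetic-critical} gives only \(K_j\to\infty\), with no rate.

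The missing idea is to use the smallness of the ball rather than the global energy. For each \(\delta>0\), \(u\) is smooth, hence bounded, near \(x_*\) on \([t_a,T-\delta]\). Therefore
\[
k_j(t)^2\le\ell_j^{-1}|B_{8\ell_j}|\,\|u\|_{L^\infty(B_{8\ell_j}(x_*)\times[t_a,T-\delta])}^2\le C_\delta\,\ell_j^2 ,
\]
so \(k_j\to0\) uniformly on \([t_a,T-\delta]\); in particular \(k_j(t_a)\to0\). Consequently any \(A_j\to\infty\) lying below \(\sup k_j\) works, with no rate needed. Its first crossing satisfies \(t_j>T-\delta\) for all large \(j\), so \(t_j\to T\) and \(L_j=A_j(t_j-t_a)/\ell_j^2\to\infty\). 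This is the paper's argument, and it also removes the need for the margin discussion in your Step 2.
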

\begin{proof}
We write \(T=T_*\) for the retained first singular time.

\emph{Step 1: relation between \(k_j\) and the ordinary marked zoom.}
For the ordinary parabolic zoom
\[
 U_j(y,s)=\ell_j u(x_*+\ell_jy,T+\ell_j^2s),
\]
a change of variables gives, at every corresponding physical time \(t\),
\[
 \|U_j\|_{L^2(B_8)}^2
 =\ell_j^{-1}
 \int_{B_{8\ell_j}(x_*)}|u(x,t)|^2\,dx
 =k_j(t)^2.
\]
Thus the divergence of the kinetic alternative in
\cref{lem:kinetic-critical} is exactly the divergence of the terminal
supremum of \(k_j\).

\emph{Step 2: choose a genuine first crossing.}
Fix \(t_a<T\) in a time region where the original solution is smooth.  Since
\(u\) is bounded near \(x_*\) on every compact interval ending before \(T\),
\[
 k_j(t_a)^2
 \le \ell_j^{-1}|B_{8\ell_j}|
 \|u(t_a)\|_{L^\infty(B_{8\ell_j}(x_*))}^2
 \le C\ell_j^2,
\]
and hence \(k_j(t_a)\to0\).  Choose any sequence \(A_j\to\infty\) lying below
the later supremum of \(k_j\).  Continuity in time of the local kinetic
integral for the smooth pre-singular solution implies that there is a first
time \(t_j>t_a\) satisfying
\[
 k_j(t_j)=A_j,
 \qquad
 k_j(t)\le A_j\quad(t_a\le t\le t_j).
\]

We claim that \(t_j\to T\).  Otherwise, after a subsequence,
\(t_j\le T-\delta\) for some \(\delta>0\).  The solution is smooth and bounded
on a neighborhood of \(x_*\times[t_a,T-\delta]\), so the preceding estimate
would hold uniformly in \(t\le T-\delta\), giving
\(k_j(t_j)\le C\ell_j\to0\).  This contradicts
\(k_j(t_j)=A_j\to\infty\).

\emph{Step 3: compute the amplitude-fast equation.}
Define
\[
 W_j(y,\tau)
 =\frac{\ell_j}{A_j}
 u\!\left(x_*+\ell_jy,
 t_j+\frac{\ell_j^2}{A_j}\tau\right),
\]
and scale the pressure by
\[
 \Pi_j(y,\tau)
 =\frac{\ell_j^2}{A_j^2}
 p^\sharp\!\left(x_*+\ell_jy,
 t_j+\frac{\ell_j^2}{A_j}\tau\right).
\]
A direct calculation gives
\[
 \partial_\tau W_j+(W_j\cdot\nabla)W_j+\nabla\Pi_j
 =\frac\nu{A_j}\Delta W_j,
 \qquad
 \nabla\cdot W_j=0.
\]
Indeed, the time derivative and convection term both acquire the common factor
\(\ell_j^3/A_j^2\), while
\(\Delta_yW_j=(\ell_j^3/A_j)\Delta_xu\), producing the effective viscosity
\(\nu/A_j\).

\emph{Step 4: the first-crossing normalization gives the uniform kinetic
bound.}
The physical time \(t\in[t_a,t_j]\) corresponds to
\[
 \tau=\frac{A_j}{\ell_j^2}(t-t_j)
 \in[-L_j,0],
 \qquad
 L_j:=\frac{A_j(t_j-t_a)}{\ell_j^2}.
\]
Another change of variables gives
\[
 \|W_j(\tau)\|_{L^2(B_8)}^2
 =\frac{k_j(t)^2}{A_j^2}.
\]
By the definition of the first crossing, this is at most one throughout
\([-L_j,0]\), and at \(\tau=0\) it equals one.  Hence
\[
 \sup_{-L_j\le\tau\le0}\|W_j(\tau)\|_{L^2(B_8)}\le1,
 \qquad
 \|W_j(0)\|_{L^2(B_8)}=1.
\]

Finally, \(t_j\to T>t_a\), \(A_j\to\infty\), and \(\ell_j\to0\), so
\[
 L_j=\frac{A_j(t_j-t_a)}{\ell_j^2}\to\infty.
\]
The spatial center is exactly \(x_*\) in every rescaling, so the retained
singular spatial mark is always \(y=0\).  No recentering or drifting frame is
introduced.  The theorem intentionally makes no claim that a singular witness
survives the later vanishing-viscosity limit; that is a separate compactness
problem.
\end{proof}

This theorem does not provide frequency tightness, strong nonlinear compactness, or retention of a singular/deformation witness in the vanishing-viscosity limit.  Those are the genuine unresolved parts of the high-critical branch.

\medskip
\noindent\textbf{Conditional passage of frequency-tight records to Euler.}
\begin{theorem}\label{thm:conditional-euler-passage}
Let \((W_j,\Pi_j)\) be a sequence of smooth divergence-free fields on
\(\mathbb R^3\times[-L_j,0]\), \(L_j\to\infty\), satisfying
\[
 \partial_\tau W_j+(W_j\cdot\nabla)W_j+\nabla\Pi_j
 =\varepsilon_j\Delta W_j,
 \qquad
 \varepsilon_j\to0.
\]
Assume that for every \(R,S<\infty\), after restricting to
\((-S,0)\times B_{2R}\),

\begin{enumerate}[label=(\roman*),leftmargin=*]
\item
\[
 \sup_j\|W_j\|_{L^\infty_\tau L^2_x}<\infty;
\]
\item for some fixed \(m>5/2\),
\[
 \sup_j
 \|\partial_\tau W_j\|_{L^1_\tau H^{-m}_x}<\infty;
\]
\item for one cutoff \(\chi_R\in C_c^\infty(B_{2R})\) equal to one on
\(B_R\), the localized high frequencies are uniformly tight:
\[
 \lim_{N\to\infty}
 \sup_j
 \|P_{>N}(\chi_RW_j)\|_{L^2((-S,0)\times\mathbb R^3)}
 =0.
\]
\end{enumerate}

Then, after a diagonal subsequence,
\[
 \boxed{W_j\to W\quad\text{strongly in }L^2_{\rm loc}
 (\mathbb R^3\times(-\infty,0)).}
\]
The limit is divergence free and satisfies the incompressible Euler equations
in the distributional sense.  If in addition there is a fixed compactly
supported spacetime test field \(\Phi\) and \(c_0>0\) such that
\[
 \left|\iint W_j\cdot\Phi\,\dd x\,\dd\tau\right|\ge c_0
\]
for all sufficiently large \(j\), then \(W\not\equiv0\).
\end{theorem}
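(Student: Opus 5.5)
The plan is to prove strong $L^2_{\rm loc}$ compactness on each fixed window $(-S,0)\times B_R$ by an Aubin--Lions/Fréchet--Kolmogorov argument. Frequency tightness (iii) replaces the missing spatial regularity, and (ii) supplies time equicontinuity. A diagonal extraction over $R,S\in\mathbb N$ then gives convergence on $\mathbb R^3\times(-\infty,0)$. Since $L_j\to\infty$, every fixed window lies in $[-L_j,0]$ for large $j$.

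\emph{Step 1: compactness on one window.} Fix $R,S$ and write $V_j=\chi_RW_j$. By (i), $V_j$ is bounded in $L^\infty_\tau L^2_x$, and hence in $L^2((-S,0)\times\mathbb R^3)$. I split $V_j=P_{\le N}V_j+P_{>N}V_j$. By (iii) the second piece is uniformly small in $L^2$ once $N$ is large. For fixed $N$, the low part is bounded in $L^2_\tau H^k_x$ for every $k$. Its time derivative is $P_{\le N}(\chi_R\partial_\tau W_j)$. Multiplication by $\chi_R$ is bounded on $H^{-m}$, and $P_{\le N}$ maps $H^{-m}$ to $H^k$ with norm depending on $N$, so this derivative is bounded in $L^1_\tau H^k_x$.

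The family $P_{\le N}V_j$ is not spatially compactly supported. I handle this by noting that $\chi_{R}$ has compact support and that the low-frequency projection has a Schwartz kernel. Hence $\|P_{\le N}V_j\|_{L^2(|x|>\rho)}$ tends to zero as $\rho\to\infty$, uniformly in $j$, because $V_j$ is supported in $B_{2R}$ and bounded in $L^2$. Aubin--Lions--Simon, using $H^1(B_\rho)\Subset L^2(B_\rho)$ together with this uniform tail smallness, then gives precompactness of $\{P_{\le N}V_j\}_j$ in $L^2((-S,0)\times\mathbb R^3)$ for each $N$. A standard $\varepsilon/3$ totally-bounded argument combines this with (iii) and shows $\{V_j\}$ is precompact in $L^2((-S,0)\times\mathbb R^3)$. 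Since $V_j=W_j$ on $B_R$, this gives strong convergence on $(-S,0)\times B_R$. Diagonalizing over $R,S\to\infty$ yields $W$ with $W_j\to W$ in $L^2_{\rm loc}$.

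\emph{Step 2: passing to the limit in the equation.} I test against divergence-free $\Phi\in C_c^\infty(\mathbb R^3\times(-\infty,0))$ to eliminate the pressure.
\begin{itemize}
\item The linear terms pass by weak convergence.
\item The viscous term satisfies $\varepsilon_j\iint W_j\cdot\Delta\Phi\to0$ by (i).
\item For the nonlinear term, $W_j\otimes W_j\to W\otimes W$ in $L^1_{\rm loc}$, because strong $L^2_{\rm loc}$ convergence of both factors suffices.
\end{itemize}
This gives the weak (Leray) form of Euler. The pressure is recovered as a distribution by de Rham, or as $\mathcal R_i\mathcal R_j$ of the local stress modulo harmonic functions, consistent with the paper's quotient conventions. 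Divergence-freeness passes directly because the constraint is linear.

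\emph{Step 3: nontriviality.} Since $\Phi$ has compact support, strong (indeed weak) $L^2_{\rm loc}$ convergence gives $\iint W\cdot\Phi=\lim_j\iint W_j\cdot\Phi$, and this has modulus at least $c_0$. Hence $W\not\equiv0$.

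The main obstacle is Step 1. Two points need care there: the Aubin--Lions step for the low-frequency part, where only an $L^1_\tau$ bound on the time derivative is available and the fields are not compactly supported, and checking that the commutator of $\chi_R$ with the equation introduces no new terms. Tracking the time derivative of $V_j$ directly removes the second concern. For the first, I will use the Simon version of Aubin--Lions, which allows $L^1$ time derivatives, with the Schwartz-tail argument above to localize in space.
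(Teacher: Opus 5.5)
Your proposal is correct and follows essentially the same route as the paper: split $\chi_R W_j$ into low and high frequencies, apply Aubin--Lions--Simon to $P_{\le N}(\chi_R W_j)$ using (ii), absorb the high part by the uniform tightness (iii), diagonalize over $R,S$, and then pass to the limit in the divergence-free weak formulation, with the witness $\Phi$ giving $W\not\equiv0$. The only difference is your spatial-tail argument for precompactness in $L^2((-S,0)\times\mathbb R^3)$, which is harmless but unnecessary, since the paper applies the compact embedding on $B_{2R}$ and restricts to $B_R$ (and if $P_{\le N}$ is a sharp cutoff its kernel is not Schwartz, although your tail bound still holds because that kernel decays like $|x|^{-2}$).
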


\begin{proof}
Fix \(R,S\) and set \(f_j=\chi_RW_j\).  For each \(N\), the low-frequency
part \(P_{\le N}f_j\) is bounded in
\(L^2(-S,0;H^1(\mathbb R^3))\), because
\[
 \|\nabla P_{\le N}f_j\|_2\le N\|f_j\|_2.
\]
Its time derivative is bounded in
\(L^1(-S,0;H^{-m})\) by assumption (ii), after multiplication by the fixed
cutoff and application of the bounded Fourier projector.  The compact
embedding \(H^1(B_{2R})\Subset L^2(B_{2R})\), together with the standard
Aubin--Lions--Simon compactness criterion~\cite{Simon1987}, therefore makes
\(P_{\le N}f_j\) precompact in \(L^2((-S,0)\times B_{2R})\).

Given \(\epsilon>0\), choose \(N\) so large that assumption (iii) makes
the \(P_{>N}f_j\) tail smaller than \(\epsilon\), uniformly in \(j\).
The low-frequency compactness then makes the full sequence \(f_j\)
precompact in \(L^2((-S,0)\times B_R)\).  A diagonal argument over
\(R,S\in\mathbb N\) yields the asserted strong local convergence.

Strong \(L^2_{\rm loc}\) convergence gives
\[
 W_j\otimes W_j\to W\otimes W
 \quad\text{strongly in }L^1_{\rm loc}.
\]
For a compactly supported divergence-free test field \(\varphi\), the
viscous term satisfies
\[
 \left|
 \varepsilon_j\iint W_j\cdot\Delta\varphi
 \right|
 \le
 C_\varphi\varepsilon_j
 \|W_j\|_{L^2(\operatorname{supp}\varphi)}
 \longrightarrow0.
\]
Passing to the limit in the divergence-free weak formulation therefore gives
the Euler equation.  Divergence freeness passes distributionally.  Finally,
the fixed witness passes to the strong local limit and prevents
\(W\equiv0\).
\end{proof}

\begin{corollary}\label{cor:high-critical-interface}
For the amplitude-fast records of \cref{thm:marked-record}, once a fixed
nonzero spacetime witness is retained, failure to extract a nonzero ancient
Euler profile must occur through failure of localized frequency tightness or
of the required time compactness.  The theorem does not say that the Euler
limit remains singular at the retained root.
\end{corollary}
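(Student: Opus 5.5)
The corollary is the contrapositive of \cref{thm:conditional-euler-passage} applied to the records of \cref{thm:marked-record}. The plan is first to check that the amplitude-fast records are admissible input for the conditional passage theorem. Second, we identify which hypotheses are automatic and which are not. Finally we read off the alternative.

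\emph{Step 1: admissibility of the records.} \cref{thm:marked-record} provides fields \(W_j\) on the backward intervals \([-L_j,0]\) with \(L_j\to\infty\). They solve Navier--Stokes with viscosity \(\varepsilon_j=\nu/A_j\to0\), are smooth before \(T_*\), and satisfy \(\sup_\tau\|W_j(\tau)\|_{L^2(B_8)}\le1\). We use the rescaled fixed-gauge pressure \(\Pi_j\) from the same proof. The equation and the vanishing viscosity are therefore exactly as required in \cref{thm:conditional-euler-passage}.

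\emph{Step 2: sorting the hypotheses.} Hypothesis (i) on \(B_{2R}\) for arbitrary \(R\) is not directly given, since the normalization controls only \(B_8\). We must either include it as part of the ``required time compactness/tightness'' data, or obtain it from local-energy propagation from \(B_8\). I would absorb it into the listed failure modes rather than prove it: a failure of local \(L^\infty_\tau L^2_x\) control on larger balls is a failure of spatial or frequency tightness at that scale. Hypothesis (ii) is the time-compactness condition. Hypothesis (iii) is localized frequency tightness. The witness hypothesis is exactly the retained nonzero spacetime witness assumed in the corollary. So whenever (ii) and (iii) hold (together with local \(L^2\) bounds), \cref{thm:conditional-euler-passage} gives strong \(L^2_{\rm loc}\) convergence on \(\R^3\times(-\infty,0)\) to a distributional Euler solution \(W\). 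This solution is ancient because \(L_j\to\infty\), and it is nonzero by the witness.

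\emph{Step 3: contrapositive and caveat.} If no nonzero ancient Euler profile can be extracted along any subsequence, then at least one of the assumptions must fail along the sequence. The witness is assumed retained and the equation structure is automatic. Hence the failure is one of localized frequency tightness (iii) or time compactness (ii), with the \(L^2\) bound grouped as above. The final sentence of the corollary needs no proof beyond observing that \cref{thm:conditional-euler-passage} concludes only strong convergence and the Euler equation. Strong \(L^2_{\rm loc}\) convergence does not transmit a singularity at \(y=0\), so no singularity claim is made.

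The main obstacle is Step 2's bookkeeping of hypothesis (i) on balls larger than \(B_8\). I would first try to derive it from the local energy inequality for \(W_j\) using the first-crossing bound and the exterior moment bound \(H_j\le H\) under the rescaling. Failing that, I would state explicitly that it belongs to the tightness failure alternative.
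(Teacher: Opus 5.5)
Your route is the one the paper intends. The corollary is given without a separate proof, as the contrapositive of \cref{thm:conditional-euler-passage} applied to the records of \cref{thm:marked-record}. Your Steps 1 and 3 match that reading: smoothness on \([-L_j,0]\), \(\varepsilon_j=\nu/A_j\to0\), ancientness from \(L_j\to\infty\), nontriviality from the witness, and no transfer of singularity.

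The step that is not closed is the one you flagged. \cref{thm:marked-record} controls \(W_j\) only on \(B_8\), whereas hypothesis (i) is required on \(B_{2R}\) for every \(R\); the paper does not address this either. Your fallback of ``absorbing'' a failure of (i) into the listed modes does not prove the corollary as stated. It yields a three-way alternative: local \(L^2\) control, frequency tightness, or time compactness. Note also that ``spatial tightness'' is not one of the two modes the statement allows. Your first option is not reliable as stated either. \(H_j\) is a supremum over the terminal zoom window \(t\in(T-\ell_j^2,T)\). Nothing in \cref{thm:marked-record} places the physical window \((t_j-S\ell_j^2/A_j,\,t_j)\), which corresponds to \(\tau\in(-S,0)\), inside that window.

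The missing idea is that (i) is redundant in the only form the passage proof uses it. That proof needs only \(\sup_j\|\chi_RW_j\|_{L^2((-S,0)\times\R^3)}<\infty\), for the \(L^2_\tau H^1_x\) bound on \(P_{\le N}(\chi_RW_j)\) and for the viscous term; the \(L^\infty_\tau\) part is never used. This bound follows from (iii) alone.

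Indeed, (iii) gives \(\sup_j\|P_{>N_0}(\chi_RW_j)\|_{L^2}<\infty\) for some \(N_0\). For \(g\) supported in the fixed ball \(\overline{B_{2R}}\) one has the uncertainty inequality
\(\|g\|_2\le C(R,M)\|\mathbf 1_{\{|\xi|>M\}}\widehat g\|_2\le C(R,M)\|P_{>N_0}g\|_2\),
with \(M=N_0\) for a sharp projector or \(M=2N_0\) for a smooth Littlewood--Paley one. The reason is that \(T=\mathbf 1_{B_{2R}}\mathcal F^{-1}\mathbf 1_{\{|\xi|\le M\}}\mathcal F\,\mathbf 1_{B_{2R}}\) is Hilbert--Schmidt and satisfies \(0\le T\le I\). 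It has norm \(<1\), because an eigenvalue \(1\) would produce a nonzero compactly supported function with compactly supported Fourier transform, which Paley--Wiener forbids.

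Apply this at each \(\tau\) and integrate over \((-S,0)\); this bounds \(\chi_RW_j\) in \(L^2_{\tau,x}\). Hence loss of local \(L^2\) control on large balls is itself a failure of localized frequency tightness. With this observation, and running the proof of \cref{thm:conditional-euler-passage} rather than quoting hypothesis (i) verbatim, your contrapositive yields the two-alternative statement exactly as written.
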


\begin{remark}
The time-compactness hypothesis in
\cref{thm:conditional-euler-passage} is an interface assumption, not a hidden
regularity conclusion.  In applications it may be verified from the exact
equation together with suitable local pressure and product bounds.  The
unresolved singularity-specific issue is stronger: one must retain a
load-bearing singular or deformation witness through this passage and, for
the heat-orbit argument below, derive the accumulating heat-chain relation.
\end{remark}

\section{Rotational-defect compactification and the heat pincer}
\label{sec:rot-pincer}

For a fixed axis $e$ set $w=(I-\Pi_e)u$ and
\[
 \mathsf R_e=\Pi_e(w\otimes w).
\]
Then $\mathsf R_e\succeq0$ and on a rotation-invariant cylinder
\[
 \boxed{
 \iint_Q\operatorname{tr}\mathsf R_e
 =\iint_Q|u-\Pi_eu|^2.
 }
\]
This is the specific physical rotational stress, not an abstract observational
Gramian.

\medskip
\noindent\textbf{Rotational distance is Lipschitz.}
\begin{lemma}
For
\[
 D_Q(f)=\inf_{e\in S^2}\|(I-\Pi_e)f\|_{L^2(Q)},
\]
one has
\[
 \boxed{|D_Q(f)-D_Q(g)|\le\|f-g\|_{L^2(Q)}}.
\]
\end{lemma}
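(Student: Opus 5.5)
The plan is to prove the Lipschitz bound fiberwise in the axis variable and then pass to the infimum. The one structural fact needed is that, for each fixed axis \(e\in S^2\), the map \(I-\Pi_e\) is a contraction on \(L^2(Q)\). The rotational group average \(\Pi_e\) averages the equivariant action \(f\mapsto R_\theta f(R_\theta^{-1}\cdot)\) over rotations \(R_\theta\) about \(e\). Because \(Q\) is a rotation-invariant cylinder, each such action is unitary on \(L^2(Q)\).

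First I will show that \(\Pi_e\) is the orthogonal projection onto the closed subspace of \(e\)-equivariant fields. Averaging unitary operators over a compact group gives a self-adjoint idempotent: self-adjoint because the adjoint of \(R_\theta\) is \(R_{-\theta}\) and Haar measure is inversion invariant, and idempotent by invariance of Haar measure. Hence \(I-\Pi_e\) is also an orthogonal projection, so \(\|(I-\Pi_e)h\|_{L^2(Q)}\le\|h\|_{L^2(Q)}\). This is the only step that needs care, namely checking that the paper's \(\Pi_e\) really is this Haar average on a rotation-invariant \(Q\). Even without orthogonality, Minkowski's integral inequality already gives \(\|\Pi_e h\|\le\|h\|\), but that bound alone does not make \(I-\Pi_e\) a contraction. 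The projection property is therefore the point to verify.

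Next, for fixed \(e\), the triangle inequality and the contraction give
\[
 \|(I-\Pi_e)f\|_{L^2(Q)}\le\|(I-\Pi_e)g\|_{L^2(Q)}+\|f-g\|_{L^2(Q)}.
\]
Taking the infimum over \(e\) on the left, and bounding the left side's infimum by its value at any particular \(e\), gives \(D_Q(f)\le\|(I-\Pi_e)g\|+\|f-g\|\) for every \(e\). Taking the infimum over \(e\) on the right then yields \(D_Q(f)\le D_Q(g)+\|f-g\|_{L^2(Q)}\).

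Exchanging \(f\) and \(g\) gives the reverse inequality, and the two together give the boxed estimate. Compactness of \(S^2\) and attainment of the infimum are not needed for this.
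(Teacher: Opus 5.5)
Your proof is correct and follows the paper's own argument: the contraction property of the orthogonal projection \(I-\Pi_e\), the triangle inequality at a fixed axis, the infimum over \(e\) on both sides, and then symmetry in \(f\) and \(g\). The paper simply asserts that \(\Pi_e\) is an orthogonal projection; your Haar-average argument (self-adjoint and idempotent on a rotation-invariant \(Q\)) supplies that justification.
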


\begin{proof}
Fix \(e\in S^2\).  Since \(\Pi_e\) is the orthogonal projection onto the
axisymmetric subspace associated with the axis \(e\), the complementary
operator \(I-\Pi_e\) is also an orthogonal projection.  In particular,
\[
 \|(I-\Pi_e)h\|_{L^2(Q)}\le\|h\|_{L^2(Q)}
\]
for every \(h\).

Using \(f=g+(f-g)\) and the triangle inequality,
\[
\begin{aligned}
 \|(I-\Pi_e)f\|_{L^2(Q)}
 &\le \|(I-\Pi_e)g\|_{L^2(Q)}
   +\|(I-\Pi_e)(f-g)\|_{L^2(Q)}\\
 &\le \|(I-\Pi_e)g\|_{L^2(Q)}+\|f-g\|_{L^2(Q)}.
\end{aligned}
\]
Taking the infimum over \(e\) gives
\[
 D_Q(f)\le D_Q(g)+\|f-g\|_{L^2(Q)}.
\]
Interchanging \(f\) and \(g\) yields
\[
 D_Q(g)\le D_Q(f)+\|f-g\|_{L^2(Q)}.
\]
Combining the two inequalities proves
\[
 |D_Q(f)-D_Q(g)|\le\|f-g\|_{L^2(Q)}.
\]
\end{proof}

\medskip
\noindent\textbf{Collar retention.}
\begin{lemma}
For $Q_\rho\Subset Q_1$,
\[
 D_{Q_\rho}(f)^2
 \ge D_{Q_1}(f)^2-\|f\|_{L^2(Q_1\setminus Q_\rho)}^2.
\]
Thus a positive full-cylinder rotational floor either survives on one fixed
interior cylinder or the defect has escaped into an already listed
collar/time/tail channel.
\end{lemma}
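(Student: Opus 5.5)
The plan is to fix an axis \(e\), compare the two cylinders directly, and then take the infimum over \(e\) at the end. The key structural fact is that \(\Pi_e\) is the rotational group average about the axis \(e\). Both \(Q_\rho\) and \(Q_1\), and hence the collar \(Q_1\setminus Q_\rho\), must be invariant under the rotations about \(e\) for \(D\) to make sense on each. I therefore read the lemma with \(\Pi_e\) defined cylinderwise, and with all three regions rotation invariant about the axes being used. Under this reading, \(\Pi_e\) commutes with multiplication by the indicator of each region. Consequently \(I-\Pi_e\) acts on \(L^2(Q_1)=L^2(Q_\rho)\oplus L^2(Q_1\setminus Q_\rho)\) block-diagonally, and on each block it is an orthogonal projection.

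From this orthogonal splitting, for each fixed \(e\),
\[
 \|(I-\Pi_e)f\|_{L^2(Q_1)}^2
 =\|(I-\Pi_e)f\|_{L^2(Q_\rho)}^2+\|(I-\Pi_e)f\|_{L^2(Q_1\setminus Q_\rho)}^2 .
\]
The second term is at most \(\|f\|_{L^2(Q_1\setminus Q_\rho)}^2\), because a projection does not increase norm; this is the same contraction used in the Lipschitz lemma. Since \(D_{Q_1}(f)^2\) is the infimum over \(e\) of the left side, we get
\[
 D_{Q_1}(f)^2\le \|(I-\Pi_e)f\|_{L^2(Q_\rho)}^2+\|f\|_{L^2(Q_1\setminus Q_\rho)}^2
 \qquad\text{for every } e\in S^2 .
\]
Taking the infimum over \(e\) on the right then gives the claimed inequality.

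The interpretive sentence follows by cases. Either the collar mass \(\|f\|_{L^2(Q_1\setminus Q_\rho)}^2\) is a small fraction of \(D_{Q_1}(f)^2\), in which case the floor survives on \(Q_\rho\). Or it is not, in which case a positive amount of \(L^2\) mass sits in the collar, and that mass is exactly what the retained collar/time/tail coordinates record. This is the use already made of the lemma in Case 2 of \cref{thm:abc}.

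The main obstacle is the geometric point that the cylinders must be rotation invariant about every axis \(e\) being tested. This holds for balls centered at the origin times time intervals, but not for literal cylinders with a fixed axis. If \(Q_\rho,Q_1\) are parabolic cylinders \(B_\rho\times I\), the argument goes through. Otherwise I would instead state the contraction property of \(I-\Pi_e\) only on the collar piece, and absorb any lack of invariance into the collar term.
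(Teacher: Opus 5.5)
Your argument is correct and is essentially the paper's proof. Fix an axis $e$, use rotation invariance of the concentric cylinders and of the collar to split $\|(I-\Pi_e)f\|_{L^2(Q_1)}^2$ orthogonally, bound the collar piece by the contraction property of $I-\Pi_e$, and take the infimum over $e$; your caveat that the cylinders should be concentric sets of the form $B_\rho\times I$, invariant under all spatial rotations about the common center, matches the paper's stated justification.
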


\begin{proof}
Fix an axis \(e\).  Because both cylinders are rotation invariant about the
same center, restriction to \(Q_\rho\) commutes with the rotational projection.
Therefore
\[
\begin{aligned}
 \|(I-\Pi_e)f\|_{L^2(Q_1)}^2
 &=\|(I-\Pi_e)f\|_{L^2(Q_\rho)}^2
  +\|(I-\Pi_e)f\|_{L^2(Q_1\setminus Q_\rho)}^2\\
 &\le \|(I-\Pi_e)f\|_{L^2(Q_\rho)}^2
  +\|f\|_{L^2(Q_1\setminus Q_\rho)}^2,
\end{aligned}
\]
where the last step uses that \(I-\Pi_e\) is an \(L^2\)-contraction.  Rearranging,
\[
 \|(I-\Pi_e)f\|_{L^2(Q_\rho)}^2
 \ge
 \|(I-\Pi_e)f\|_{L^2(Q_1)}^2
 -\|f\|_{L^2(Q_1\setminus Q_\rho)}^2.
\]
The second term on the right is independent of \(e\).  Taking the infimum over
all axes therefore gives
\[
 D_{Q_\rho}(f)^2
 \ge D_{Q_1}(f)^2
 -\|f\|_{L^2(Q_1\setminus Q_\rho)}^2.
\]
If the full-cylinder defect has a fixed positive lower bound but no interior
cylinder retains one, the displayed inequality forces a corresponding amount
of \(L^2\) mass into the collar \(Q_1\setminus Q_\rho\).  That is exactly the
listed collar/time/tail alternative.
\end{proof}

\medskip
\noindent\textbf{Heat payment versus smoothed transverse carrier.}
\begin{proposition}
Let $P_s=e^{s\Delta}$ and
\[
 H_s(f)=\|f\|_2^2-\|P_sf\|_2^2
 =2\int_0^s\|\nabla P_\sigma f\|_2^2\,d\sigma.
\]
If $D(f)^2\ge\delta$, then
\[
 \boxed{
 H_s(f)\ge\frac\delta4
 \quad\vee\quad
 D(P_sf)\ge\frac{\sqrt\delta}{2}.
 }
\]
\end{proposition}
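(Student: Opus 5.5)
The plan is to combine the Lipschitz property of the rotational distance from the preceding lemma with the energy identity defining \(H_s\). Here \(D\) is the whole-space (or rotation-invariant domain) version of \(D_Q\). Its Lipschitz bound \(|D(f)-D(g)|\le\|f-g\|_2\) holds by the identical argument, because each \(I-\Pi_e\) is an orthogonal projection, hence a contraction. I will apply it with \(g=P_sf\):
\[
 D(P_sf)\ge D(f)-\|f-P_sf\|_2\ge\sqrt\delta-\|f-P_sf\|_2 .
\]
So if \(\|f-P_sf\|_2\le\sqrt\delta/2\), the second alternative \(D(P_sf)\ge\sqrt\delta/2\) holds directly.

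The remaining case is \(\|f-P_sf\|_2>\sqrt\delta/2\), i.e. \(\|f-P_sf\|_2^2>\delta/4\), and I need to convert this into \(H_s(f)\ge\delta/4\). The key step is a Fourier comparison. Write the multiplier of \(P_s\) as \(m=e^{-s|\xi|^2}\in(0,1]\). Then
\[
 \|f-P_sf\|_2^2=\int(1-m)^2|\hat f|^2,\qquad H_s(f)=\int(1-m^2)|\hat f|^2 .
\]
Since \(0\le 1-m\le 1\le 1+m\), we get \((1-m)^2\le(1-m)(1+m)=1-m^2\) pointwise. Hence \(\|f-P_sf\|_2^2\le H_s(f)\), and in this case \(H_s(f)>\delta/4\). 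The same inequality can be stated operator-theoretically: \((I-P_s)^2\le I-P_s^2\) because \(P_s\) is self-adjoint with \(0\le P_s\le I\).

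The representation \(H_s(f)=2\int_0^s\|\nabla P_\sigma f\|_2^2\,d\sigma\) stated in the proposition is just \(\frac{d}{d\sigma}\|P_\sigma f\|_2^2=-2\|\nabla P_\sigma f\|_2^2\) integrated from \(0\) to \(s\). I would verify it on the Fourier side and not use it otherwise.

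I expect essentially no obstacle. The only point needing care is that the rotational projection \(\Pi_e\) is an orthogonal projection on the whole-space \(L^2\) in which \(P_s\) acts, so that the Lipschitz lemma applies with the same norm used in \(H_s\). Since the proof uses only the contraction property of \(I-\Pi_e\), no commutation between \(\Pi_e\) and \(P_s\) is needed, although they do commute because the heat kernel is rotation invariant.
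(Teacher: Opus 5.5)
Your proposal is correct and follows essentially the same route as the paper: the pointwise Fourier inequality $(1-e^{-s|\xi|^2})^2\le 1-e^{-2s|\xi|^2}$ gives $\|f-P_sf\|_2^2\le H_s(f)$, and the Lipschitz bound for the rotational distance applied with $g=P_sf$ then yields the dichotomy. The only difference is cosmetic: you split cases on the size of $\|f-P_sf\|_2$, whereas the paper splits on the size of $H_s(f)$, and these give the same dichotomy.
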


\begin{proof}
The identity for \(H_s\) follows from the energy law for the heat equation.  If
\(v(\sigma)=P_\sigma f\), then
\[
 \frac{d}{d\sigma}\|v(\sigma)\|_2^2
 =2\langle\Delta v,v\rangle
 =-2\|\nabla v\|_2^2.
\]
Integrating from \(0\) to \(s\) gives
\[
 \|f\|_2^2-\|P_sf\|_2^2
 =2\int_0^s\|\nabla P_\sigma f\|_2^2\,d\sigma.
\]

Next compare the heat loss with the distance moved by the heat semigroup.  By
Plancherel,
\[
 \|f-P_sf\|_2^2
 =\int_{\R^3}(1-e^{-s|\xi|^2})^2|\widehat f(\xi)|^2\,d\xi.
\]
For \(x\ge0\),
\[
 (1-e^{-x})^2
 =1-2e^{-x}+e^{-2x}
 \le1-e^{-2x},
\]
because the difference equals \(2e^{-x}(1-e^{-x})\ge0\).  Therefore
\[
 \|f-P_sf\|_2^2
 \le
 \int(1-e^{-2s|\xi|^2})|\widehat f|^2
 =H_s(f).
\]

Assume \(D(f)^2\ge\delta\), so \(D(f)\ge\sqrt\delta\).  If
\(H_s(f)\ge\delta/4\), the first alternative holds.  Otherwise
\(\sqrt{H_s(f)}<\sqrt\delta/2\).  By the Lipschitz property of the rotational
distance,
\[
\begin{aligned}
 D(P_sf)
 &\ge D(f)-\|f-P_sf\|_2\\
 &\ge \sqrt\delta-\sqrt{H_s(f)}
 >\frac{\sqrt\delta}{2}.
\end{aligned}
\]
This is the second alternative and proves the proposition.
\end{proof}

\medskip
\noindent\textbf{Compact critical branch.}
\begin{theorem}
Let $(u_n,p_n^\sharp)$ be a singularity-preserving critically bounded selected
sequence and assume strong local $L^3$ velocity compactness, product/Reynolds
passage, no collar/time/tail loss of rotational $L^2$ mass, and fixed root
gauge/quotient/cutoff conventions.  Then, after a subsequence, either the
rotational defect tends to zero and the axis-capture branch is entered, or a
quantitative non-axisymmetric rotational floor survives to a singular
suitable-weak limit on one fixed interior cylinder.
\end{theorem}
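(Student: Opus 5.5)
The plan is to reduce the statement to the two-case analysis already carried out in the proof of \cref{thm:abc}. The additional hypotheses of the present theorem are exactly what remove the ``retained defect'' escape routes there.

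\emph{Extraction.} Since the sequence is critically bounded, $\sup_n\iint_{Q_1}(|u_n|^3+|p_n^\sharp|^{3/2})<\infty$. Using the assumed strong local $L^3$ compactness and product/Reynolds passage, we extract a subsequence converging strongly in $L^3_{\rm loc}$ to a suitable-weak limit $(U,P)$, with pressure taken in the fixed root gauge and with the fixed quotient and cutoff conventions. Singularity preservation keeps the origin singular for this limit. Strong $L^3_{\rm loc}$ convergence on the bounded cylinder implies strong $L^2(Q_\rho)$ convergence for every $\rho<1$.

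\emph{The two cases.} Set $\delta_n=D_{Q_1}(u_n)$. This is bounded by $\|u_n\|_{L^2(Q_1)}$, which is uniformly controlled by the $L^3$ bound. Pass to a further subsequence with $\delta_n\to\delta_*\ge0$.

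If $\delta_*=0$, we argue exactly as in Case 1 of \cref{thm:abc}. Choose approximate minimizing axes $e_n\to e_*$ and rotations $R_n\to R_*$ taking $e_n$ to $e_3$. Rotational covariance, combined with the $L^2$-contraction property of $I-\Pi_{e_3}$ on the rotation-invariant subcylinders, gives $(I-\Pi_{e_3})(R_*U)=0$. This is the axis-capture branch.

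If $\delta_*>0$, we use the hypothesis that no rotational $L^2$ mass is lost to collar, time, or tail. Concretely, we read it as saying that for each $\epsilon$ there is $\rho<1$ with $\limsup_n\|u_n\|^2_{L^2(Q_1\setminus Q_\rho)}\le\epsilon$. Take $\epsilon=\delta_*^2/8$. The collar-retention lemma then gives $D_{Q_\rho}(u_n)^2\ge\delta_*^2/2$ for large $n$. The Lipschitz lemma together with strong $L^2(Q_\rho)$ convergence passes this bound to the limit, so $D_{Q_\rho}(U)\ge\delta_*/\sqrt2>0$. This is a quantitative non-axisymmetric floor on the fixed interior cylinder $Q_\rho$, attached to the singular limit.

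\emph{Main obstacle.} The delicate point is formalizing ``no collar/time/tail loss''. If it only holds for the limit and not uniformly in $n$, we would first pass it through the strong convergence on $Q_1$. Failing that, we would restrict to cylinders $Q_{\rho'}$ with $\rho<\rho'<1$, on which strong convergence is available, and apply collar retention between $Q_{\rho'}$ and $Q_\rho$. Dichotomy is immediate since $\delta_*=0$ or $\delta_*>0$.
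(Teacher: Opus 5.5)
Your proposal is correct and follows essentially the same route as the paper's proof: $\delta_n=D_{Q_1}(u_n)\to\delta_*$ after a subsequence, axis capture via approximate minimizing axes and fixed rotations when $\delta_*=0$, and collar retention combined with the Lipschitz lemma under strong local $L^2$ convergence when $\delta_*>0$. The only difference is cosmetic: the paper formalizes the no-loss hypothesis as $\|u_n\|^2_{L^2(Q_1\setminus Q_\rho)}\le\delta_n^2-\varepsilon^2$ and obtains the floor $\varepsilon$, and the ``main obstacle'' you flag is harmless anyway, since H\"older gives $\|u_n\|^2_{L^2(Q_1\setminus Q_\rho)}\le\|u_n\|^2_{L^3(Q_1)}\,|Q_1\setminus Q_\rho|^{1/3}$, so your reading of that hypothesis already follows from critical boundedness.
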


\begin{proof}
Let
\[
 \delta_n:=D_{Q_1}(u_n).
\]
The critically bounded hypotheses give a uniform local \(L^2\) bound, so
\((\delta_n)\) is bounded.  Passing to a subsequence, assume
\(\delta_n\to\delta_*\ge0\).

If \(\delta_*=0\), choose approximate minimizing axes \(e_n\) as in the proof
of \cref{thm:abc}.  Compactness of \(S^2\), followed by fixed rotations, gives
an axis \(e_*\) such that the non-axisymmetric components converge to zero on
every strict subcylinder.  Strong local \(L^3\) convergence implies strong
local \(L^2\) convergence on bounded cylinders, and therefore the limiting
field is invariant under rotations about \(e_*\).  The singularity-preserving
hypothesis keeps the limiting state singular, so the axis-capture branch is
entered.

Assume now that \(\delta_*>0\).  By the no-collar/time/tail-loss hypothesis,
there is a fixed \(Q_\rho\Subset Q_1\) and a number \(\varepsilon>0\) such that,
for all sufficiently large \(n\),
\[
 \|u_n\|_{L^2(Q_1\setminus Q_\rho)}^2
 \le \delta_n^2-\varepsilon^2.
\]
The collar-retention lemma gives
\[
 D_{Q_\rho}(u_n)\ge\varepsilon.
\]
By strong local \(L^3\), hence strong local \(L^2\), convergence and the
Lipschitz lemma,
\[
 D_{Q_\rho}(U)
 =\lim_{n\to\infty}D_{Q_\rho}(u_n)
 \ge\varepsilon>0.
\]
Thus the suitable-weak limit has a quantitative non-axisymmetric rotational
floor.  The assumptions on product/Reynolds passage and the fixed
pressure/quotient conventions guarantee that this limit is the same retained
physical endpoint state, not a rotational average or a separately forced
surrogate.  By singularity preservation it remains singular at the retained
point.  This is the second alternative.
\end{proof}

\section{Heat-orbit rigidity and normalized spectral recession}
\label{sec:heat-orbit-rigidity}

The high-critical normalization in the preceding section has vanishing effective
viscosity.  If a nonzero strong limit exists, the natural limiting equation is
therefore Euler rather than Navier--Stokes.  A second question is whether the
limiting profile can remain invisible to the viscous heat evolution.  The
results in this section isolate what such heat-orbit invisibility actually
forces.  They are independent of the still-open passage from the selected
Navier--Stokes record to a nonzero heat-orbit-null Euler profile.

Let
\[
 \mathcal B(v,w)=\mathbb P\nabla\!\cdot(v\otimes w),
 \qquad P_s=e^{s\Delta},
\]
where \(\mathbb P\) is the Leray projection.  A divergence-free field \(W\) will
be called \emph{heat-orbit null} if
\[
 \mathcal B(P_sW,P_sW)=0
 \qquad\text{for every }s>0.
\]
When \(W\) is a steady Euler field, this says that every positive heat iterate
\(P_sW\) is again a steady Euler field, after absorbing the gradient part of
the convection into the pressure.

\medskip
\noindent\textbf{An accumulating set of positive heat times is enough.}
\begin{lemma}\label{lem:heat-analyticity}
Let \(W\in L^2(\R^3)\) be divergence free.  If
\[
 \mathcal B(P_sW,P_sW)=0
\]
for \(s\) in a subset of \((0,\infty)\) having an accumulation point in
\((0,\infty)\), then \(W\) is heat-orbit null.
\end{lemma}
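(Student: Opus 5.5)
The plan is to use real-analyticity of the map \(s\mapsto\mathcal B(P_sW,P_sW)\) on \((0,\infty)\) and then apply the identity theorem.

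First, fix the target space. For \(W\in L^2\) and \(s>0\), \(P_sW\) lies in every \(H^k\). So \(\mathcal B(P_sW,P_sW)\) is a well-defined element of \(L^2\) (indeed of \(H^k\)). Pair it with an arbitrary Schwartz test field \(\varphi\) and set
\[
 F_\varphi(s)=\langle \mathcal B(P_sW,P_sW),\varphi\rangle .
\]
Write \(F_\varphi\) on the Fourier side as a bilinear integral:
\[
 F_\varphi(s)=\iint e^{-s(|\eta|^2+|\xi-\eta|^2)}\,m(\xi,\eta)\,\widehat W(\eta)\otimes\widehat W(\xi-\eta)\,\overline{\widehat\varphi(\xi)}\,d\eta\,d\xi .
\]
Here \(m\) is the symbol of \(\mathbb P\nabla\cdot\), which is bounded by \(|\xi|\).

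Next, extend \(s\) to complex values \(z\) with \(\operatorname{Re}z>0\). The modulus of the exponential is then \(e^{-\operatorname{Re}z(|\eta|^2+|\xi-\eta|^2)}\). The integrand is dominated, locally uniformly in \(z\), by
\[
 |\xi|\,|\widehat W(\eta)|\,|\widehat W(\xi-\eta)|\,|\widehat\varphi(\xi)|,
\]
which is integrable: Cauchy--Schwarz in \(\eta\) bounds the inner integral by \(\|W\|_2^2\), and \(|\xi||\widehat\varphi|\in L^1\). Morera's theorem, or differentiation under the integral sign, then shows \(F_\varphi\) is holomorphic on the right half-plane, hence real-analytic on \((0,\infty)\).

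Finally, apply the identity theorem. By hypothesis \(F_\varphi\) vanishes on a set with an accumulation point in \((0,\infty)\), so \(F_\varphi\equiv0\) on \((0,\infty)\). Since \(\varphi\) is arbitrary, \(\mathcal B(P_sW,P_sW)=0\) as a distribution for every \(s>0\), which is heat-orbit nullity.

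The only delicate step is the uniform domination needed for holomorphy. I would do it first near the imaginary-free region \(\operatorname{Re}z\ge s_0>0\), checking that no decay in \(\widehat W\) beyond \(L^2\) is needed, since the \(|\xi|\) factor is absorbed by \(\widehat\varphi\). If a direct vector-valued version is preferred, the same bound shows \(z\mapsto \mathcal B(P_zW,P_zW)\) is holomorphic into \(L^2\), using the Gaussian to absorb \(|\xi|\).
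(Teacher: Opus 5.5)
Your proof is correct and follows essentially the same route as the paper: you test $s\mapsto\mathcal B(P_sW,P_sW)$ against a fixed functional, establish analyticity in $s$, and apply the identity theorem. The difference lies only in how you obtain analyticity. You complexify $s$ in the Fourier-side bilinear integral and use the $z$-independent integrable majorant $|\xi|\,|\widehat W(\eta)|\,|\widehat W(\xi-\eta)|\,|\widehat\varphi(\xi)|$ with Morera's theorem, whereas the paper invokes real-analyticity of the heat semigroup into $H^m$ together with the algebra property of $H^m$ for $m>5/2$.
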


\begin{proof}
Fix \(s_0>0\).  For every integer \(m\ge0\), the heat semigroup maps \(L^2\)
continuously into \(H^m\) for positive time, and the map
\[
 s\longmapsto P_sW
\]
is real analytic from a neighborhood of \(s_0\) into \(H^m\).  Choose
\(m>5/2\).  Multiplication is continuous from \(H^m\times H^m\) into \(H^m\),
and one derivative followed by the bounded Fourier multiplier \(\mathbb P\)
maps \(H^m\) into \(H^{m-1}\).  Hence
\[
 s\longmapsto \mathcal B(P_sW,P_sW)
\]
is a real-analytic \(H^{m-1}\)-valued map on \((0,\infty)\).

Pairing with any element of the dual space gives a scalar real-analytic
function.  If the vector-valued map vanishes on a set with an interior
accumulation point, every such scalar pairing vanishes identically by the
identity theorem.  Therefore the vector-valued map itself vanishes for all
\(s>0\).
\end{proof}

The next result shows that heat-orbit nullity cannot be achieved by cancellation
between distinct interaction-energy levels.  The cancellation must occur
separately on every radial-sum shell.

\medskip
\noindent\textbf{Shellwise cancellation of the Euler interaction.}
\begin{theorem}\label{thm:shellwise-cancellation}
Let \(W\in\mathcal S(\R^3;\R^3)\) be divergence free and heat-orbit null.
For \(k\ne0\), write
\[
 \mathbf P_k=I-\frac{k\otimes k}{|k|^2}.
\]
Then for almost every \(r>0\),
\[
 \boxed{
 \mathbf P_k
 \int_{S^2}
 \bigl(\widehat W(\tfrac{k}{2}+r\omega)\cdot k\bigr)
 \widehat W(\tfrac{k}{2}-r\omega)\,\dd\omega
 =0.
 }
\]
Because \(W\) is Schwartz, the left-hand side is continuous in \(r\), and the
identity therefore holds for every \(r>0\).
\end{theorem}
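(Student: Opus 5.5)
The plan is to pass to Fourier variables, where the heat weight factors across each interaction, and then use Laplace-transform uniqueness to split the single identity in $s$ into one identity per interaction-energy shell.

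\emph{Fourier form of nullity.} Writing $\widehat{P_sW}(\eta)=e^{-s|\eta|^2}\widehat W(\eta)$, the quadratic term has transform
\[
 \widehat{\mathcal B(P_sW,P_sW)}(k)
 = i\,\mathbf P_k\int_{\R^3}e^{-s(|\eta|^2+|k-\eta|^2)}
 \bigl(\widehat W(k-\eta)\cdot k\bigr)\widehat W(\eta)\,\dd\eta ,
\]
up to the normalizing constant. Here the divergence acting on the first slot gives the factor $k\cdot\widehat W(k-\eta)$. Because $\widehat W$ is Schwartz, this is a continuous function of $k$, so nullity for every $s>0$ gives vanishing for every $k$ and every $s>0$.

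\emph{Shell coordinates.} Fix $k\ne0$ and substitute $\eta=\tfrac k2-r\omega$ with $r>0$ and $\omega\in S^2$, so that $k-\eta=\tfrac k2+r\omega$. This matches the order in the statement. The interaction energy becomes
\[
 |\eta|^2+|k-\eta|^2=\tfrac{|k|^2}{2}+2r^2 ,
\]
because the cross terms cancel. This is the key algebraic point: the heat weight depends only on $r$.

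\emph{Laplace transform.} Cancel the common factor $e^{-s|k|^2/2}$ and set $t=2r^2$. Let
\[
 G_k(r)=\mathbf P_k\int_{S^2}\bigl(\widehat W(\tfrac k2+r\omega)\cdot k\bigr)\widehat W(\tfrac k2-r\omega)\,\dd\omega .
\]
Then the nullity identity reads
\[
 0=\int_0^\infty e^{-2sr^2}r^2G_k(r)\,\dd r
 =c\int_0^\infty e^{-st}\,t^{1/2}G_k\bigl(\sqrt{t/2}\bigr)\,\dd t
 \qquad\text{for all } s>0 .
\]
The function $t\mapsto t^{1/2}G_k(\sqrt{t/2})$ is continuous and has polynomial growth, indeed rapid decay, since $W$ is Schwartz. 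It is therefore a locally integrable function whose Laplace transform vanishes on $(0,\infty)$.

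\emph{Conclusion.} Uniqueness of the Laplace transform (Lerch's theorem), applied to each component, forces $t^{1/2}G_k(\sqrt{t/2})=0$ for a.e.\ $t>0$. Hence $G_k(r)=0$ for a.e.\ $r>0$. Continuity of $G_k$ in $r$, from dominated convergence with Schwartz bounds on the compact sphere, upgrades this to every $r>0$.

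\emph{Main obstacle.} The main point is bookkeeping rather than a hard estimate. Two things must be checked. First, the Leray projector $\mathbf P_k$ and the factor $k$ are independent of $\eta$, so they pull out of the shell integral. Second, the polar change of variables and the Laplace uniqueness statement are applied with justified integrability. Both are routine for Schwartz data.
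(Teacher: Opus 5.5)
Your proposal is correct and follows the paper's proof essentially step for step. You use the Fourier form of the convection term, the midpoint substitution that makes the heat weight $e^{-s(|k|^2/2+2r^2)}$ depend only on the shell radius, polar coordinates, uniqueness of the Laplace transform, and continuity in $r$ to remove the exceptional set. The only differences are cosmetic: you label the convolution variable so that $\eta=\tfrac k2-r\omega$ rather than $\tfrac k2+r\omega$, and you spell out the substitution $t=2r^2$ and the continuity in $k$ that the paper leaves implicit.
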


\begin{proof}
For a divergence-free vector field \(v\),
\[
 \widehat{\mathcal B(v,v)}(k)
 =
 i\mathbf P_k
 \int_{\R^3}
 \bigl(\widehat v(\eta)\cdot k\bigr)
 \widehat v(k-\eta)\,\dd\eta.
\]
Indeed, the Fourier transform of \(\nabla\!\cdot(v\otimes v)\) contains
\(\widehat v(\eta)\cdot(k-\eta)\), and
\(\widehat v(\eta)\cdot\eta=0\) replaces this factor by
\(\widehat v(\eta)\cdot k\).

Apply the formula to \(v=P_sW\).  Since
\[
 \widehat{P_sW}(\eta)=e^{-s|\eta|^2}\widehat W(\eta),
\]
heat-orbit nullity gives, for every \(s>0\) and \(k\ne0\),
\[
 \mathbf P_k
 \int_{\R^3}
 e^{-s(|\eta|^2+|k-\eta|^2)}
 \bigl(\widehat W(\eta)\cdot k\bigr)
 \widehat W(k-\eta)\,\dd\eta=0.
\]
Set
\[
 \eta=\frac{k}{2}+\zeta.
\]
Then
\[
 |\eta|^2+|k-\eta|^2
 =\frac{|k|^2}{2}+2|\zeta|^2.
\]
After removing the nonzero scalar factor \(e^{-s|k|^2/2}\) and passing to
polar coordinates \(\zeta=r\omega\), we obtain
\[
 \int_0^\infty e^{-2sr^2}r^2 F_k(r)\,\dd r=0,
\]
where
\[
 F_k(r)=
 \mathbf P_k
 \int_{S^2}
 \bigl(\widehat W(\tfrac{k}{2}+r\omega)\cdot k\bigr)
 \widehat W(\tfrac{k}{2}-r\omega)\,\dd\omega.
\]
The Schwartz decay makes \(r^2F_k(r)\) integrable against every positive
exponential weight.  With the change of variable \(\lambda=2r^2\), the last
display is the Laplace transform of a finite vector-valued density.  Uniqueness
of the Laplace transform implies \(F_k(r)=0\) for almost every \(r>0\).
Continuity in \(r\) then removes the exceptional set.
\end{proof}

A useful consequence is an isotropy law on each individual Fourier sphere.
It is much stronger than an isotropy statement obtained only after integrating
over all frequencies.

\medskip
\noindent\textbf{Shellwise covariance isotropy.}
\begin{corollary}\label{cor:shellwise-isotropy}
Under the hypotheses of \cref{thm:shellwise-cancellation}, define
\[
 H(r)
 =
 \int_{S^2}
 \widehat W(r\omega)\otimes
 \overline{\widehat W(r\omega)}\,\dd\omega.
\]
Then \(H(r)\) is a real symmetric matrix and
\[
 \boxed{
 H(r)=\frac{\operatorname{tr}H(r)}{3}\,I
 =
 \frac13
 \left(\int_{S^2}|\widehat W(r\omega)|^2\,\dd\omega\right)I
 }
 \qquad(r>0).
\]
\end{corollary}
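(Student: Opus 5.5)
The plan is to obtain the isotropy law as a small-output limit of the shellwise cancellation identity of \cref{thm:shellwise-cancellation}. I send the output frequency \(k\) to zero along a fixed direction while keeping the shell radius fixed. In the limit, the projected interaction becomes a statement about the covariance matrix \(H(r)\) acting on that direction.

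\emph{Step 1: reality and symmetry of \(H(r)\).} Since \(W\) is real-valued, \(\widehat W(-\xi)=\overline{\widehat W(\xi)}\). By construction \(H(r)\) is Hermitian. The change of variables \(\omega\mapsto-\omega\) on \(S^2\) gives
\[
 H(r)=\int_{S^2}\overline{\widehat W(r\omega)}\otimes\widehat W(r\omega)\,\dd\omega=\overline{H(r)}.
\]
Hence \(H(r)\) is real. A real Hermitian matrix is real symmetric, so \(H(r)^T=H(r)\).

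\emph{Step 2: the small-output limit.} Fix \(r>0\) and a unit vector \(n\), and put \(k=\varepsilon n\) with \(\varepsilon>0\). Since \(\mathbf P_k=\mathbf P_n\), dividing the identity of \cref{thm:shellwise-cancellation} by \(\varepsilon\) gives
\[
 \mathbf P_n\int_{S^2}\bigl(\widehat W(\tfrac{\varepsilon n}{2}+r\omega)\cdot n\bigr)\,\widehat W(\tfrac{\varepsilon n}{2}-r\omega)\,\dd\omega=0 .
\]
Because \(W\) is Schwartz, \(\widehat W\) is continuous and bounded on the compact set swept out as \(\varepsilon\to0\). Dominated convergence on \(S^2\) therefore lets me pass to \(\varepsilon\to0\), obtaining
\[
 \mathbf P_n\int_{S^2}\overline{\widehat W(r\omega)}\,\bigl(\widehat W(r\omega)\cdot n\bigr)\,\dd\omega=0 .
\]
Here I used \(\widehat W(-r\omega)=\overline{\widehat W(r\omega)}\). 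The integral equals \(\overline{H(r)}\,n\), and by Step 1 this is \(H(r)n\). So \(\mathbf P_nH(r)n=0\), meaning \(H(r)n\parallel n\) for every unit \(n\).

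\emph{Step 3: linear algebra.} A real matrix for which every vector is an eigenvector is a scalar multiple of the identity. To see this, apply the condition to \(e_i\), \(e_j\) and \(e_i+e_j\); the eigenvalues must agree. Thus \(H(r)=\lambda(r)I\). Taking traces gives \(\lambda(r)=\tfrac13\operatorname{tr}H(r)\), and \(\operatorname{tr}H(r)=\int_{S^2}|\widehat W(r\omega)|^2\,\dd\omega\).

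The only delicate point is the bookkeeping in Step 2. One must track which factor carries the complex conjugate and the transpose when \(-r\omega\) is converted into a conjugate. Step 1 removes this ambiguity, because it shows that \(H\), \(\overline H\) and \(H^T\) all coincide. Justifying the limit itself is routine given the Schwartz hypothesis.
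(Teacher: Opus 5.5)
Your proposal is correct and follows the paper's proof essentially step for step. The shared steps are: the small-output limit $k=\varepsilon n\to0$ in the shellwise identity after dividing by $\varepsilon$; reality of $H(r)$ via $\omega\mapsto-\omega$ and $\widehat W(-\xi)=\overline{\widehat W(\xi)}$; and the fact that a matrix having every direction as an eigenvector is a scalar multiple of the identity. You identify the limiting vector as $\overline{H(r)}\,n$ and then use Step 1 to replace it by $H(r)n$, which is just a more careful version of the paper's ``up to transpose'' remark at the same point.
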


\begin{proof}
Fix \(n\in S^2\) and put \(k=\varepsilon n\) in
\cref{thm:shellwise-cancellation}.  Since
\(\mathbf P_{\varepsilon n}=\mathbf P_n\), division by \(\varepsilon\) gives
\[
 \mathbf P_n
 \int_{S^2}
 \bigl(\widehat W(\tfrac{\varepsilon n}{2}+r\omega)\cdot n\bigr)
 \widehat W(\tfrac{\varepsilon n}{2}-r\omega)\,\dd\omega=0.
\]
Letting \(\varepsilon\downarrow0\) and using the smoothness of
\(\widehat W\),
\[
 \mathbf P_n
 \int_{S^2}
 \bigl(\widehat W(r\omega)\cdot n\bigr)
 \widehat W(-r\omega)\,\dd\omega=0.
\]
Because \(W\) is real,
\[
 \widehat W(-r\omega)=\overline{\widehat W(r\omega)}.
\]
The vector in the preceding integral is therefore \(H(r)n\), up to transpose.
The full-sphere integral satisfies
\[
 H(r)=\overline{H(r)}
\]
after the change \(\omega\mapsto-\omega\); since it is also Hermitian by
definition, it is real symmetric.  Hence
\[
 \mathbf P_n H(r)n=0
 \qquad\text{for every }n\in S^2.
\]
Thus every direction \(n\) is an eigenvector of the real symmetric matrix
\(H(r)\).  The only real symmetric matrices with this property are scalar
multiples of the identity.  Taking the trace determines the scalar and proves
the formula.
\end{proof}

The next theorem does not assume Euler structure.  It is a purely spectral
statement about normalized heat aging, and it identifies precisely how a
nonzero whole-space \(L^2\) state loses spatial tightness.

\medskip
\noindent\textbf{Normalized heat aging forces spectral-edge concentration and spatial recession.}
\begin{theorem}\label{thm:spectral-edge-recession}
Let \(0\ne W\in L^2(\R^3;\mathbb C^N)\) and define
\[
 \lambda_*=
 \operatorname*{ess\,inf}_{\widehat W(\xi)\ne0}|\xi|^2,
 \qquad
 V_s=\frac{P_sW}{\|P_sW\|_2}.
\]
Then:
\begin{enumerate}[label=\textup{(\roman*)},leftmargin=*]
\item for every \(\delta>0\),
\[
 \int_{\{|\xi|^2\ge\lambda_*+\delta\}}
 |\widehat V_s(\xi)|^2\,\dd\xi\longrightarrow0;
\]
\item \(V_s\rightharpoonup0\) weakly in \(L^2(\R^3)\);
\item
\[
 \|\nabla V_s\|_2^2\longrightarrow\lambda_*;
\]
\item for every fixed \(R<\infty\),
\[
 \boxed{
 \|V_s\|_{L^2(B_R)}\longrightarrow0,
 \qquad
 \|V_s\|_{L^2(\R^3\setminus B_R)}\longrightarrow1;
 }
\]
\item if \(\lambda_*>0\), then
\[
 \boxed{
 -\frac1s\log\|P_sW\|_2\longrightarrow\lambda_*.
 }
\]
\end{enumerate}
\end{theorem}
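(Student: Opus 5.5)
Everything will be done on the Fourier side via Plancherel. Write $\mu:=|\widehat W(\xi)|^2\,d\xi$, a finite nonzero measure, and note $\|P_sW\|_2^2=\int e^{-2s|\xi|^2}\,d\mu$. Hence $|\widehat V_s|^2\,d\xi$ is the probability measure $e^{-2s|\xi|^2}d\mu/\int e^{-2s|\xi|^2}d\mu$. By the definition of $\lambda_*$ as an essential infimum, the set $A_\delta=\{\lambda_*\le|\xi|^2<\lambda_*+\delta/2\}$ has positive $\mu$-measure for every $\delta>0$.

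\emph{Step (i).} On $\{|\xi|^2\ge\lambda_*+\delta\}$ the numerator is at most $e^{-2s(\lambda_*+\delta)}\mu(\R^3)$. The denominator is at least $e^{-2s(\lambda_*+\delta/2)}\mu(A_\delta)$. The ratio is therefore $\le e^{-s\delta}\mu(\R^3)/\mu(A_\delta)\to0$.

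\emph{Step (iii).} Write $\|\nabla V_s\|_2^2=\int|\xi|^2|\widehat V_s|^2$, which is $\ge\lambda_*$ because $|\widehat V_s|^2$ is supported where $|\xi|^2\ge\lambda_*$. For the upper bound, split at $\lambda_*+\delta$: the low part contributes at most $\lambda_*+\delta$. For the high part, use the pointwise bound $|\xi|^2e^{-2s|\xi|^2}\le C_\delta e^{-2s(\lambda_*+\delta)}$ for $|\xi|^2\ge\lambda_*+\delta$ and $s\ge1$. This holds because $xe^{-2sx}$ is decreasing past $1/(2s)$, and for large $s$ we have $1/(2s)<\lambda_*+\delta$. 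The same comparison as in (i) then sends the high part to zero. Letting $\delta\to0$ gives the limit $\lambda_*$.

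\emph{Step (v).} Assume $\lambda_*>0$. The upper bound $\|P_sW\|_2^2\le e^{-2s\lambda_*}\mu(\R^3)$ gives $\liminf_{s\to\infty}\bigl(-\tfrac1s\log\|P_sW\|_2\bigr)\ge\lambda_*$. The lower bound $\|P_sW\|_2^2\ge e^{-2s(\lambda_*+\delta/2)}\mu(A_\delta)$ gives $\limsup\le\lambda_*+\delta/2$. Now let $\delta\to0$.

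\emph{Step (ii), the main obstacle.} Weak convergence to zero is the delicate point when $\lambda_*=0$, or more generally when $\mu$ charges the sphere $\{|\xi|^2=\lambda_*\}$ only as a null set. Since the family $(V_s)$ is bounded, it suffices to show $\langle \widehat V_s,\widehat g\rangle\to0$ for $g$ in a dense class. The key fact is that the sphere $\{|\xi|^2=\lambda_*\}$ has Lebesgue measure zero; note that $\mu$ is absolutely continuous with respect to Lebesgue measure. Fix $g\in L^2$ and apply Cauchy--Schwarz:
\[
|\langle\widehat V_s,\widehat g\rangle|\le\Bigl(\int_{\{|\xi|^2\ge\lambda_*+\delta\}}|\widehat V_s|^2\Bigr)^{1/2}\|g\|_2+\|\widehat g\,1_{\{\lambda_*\le|\xi|^2<\lambda_*+\delta\}}\|_2 .
\]
The first term vanishes as $s\to\infty$ by (i). The second term tends to zero as $\delta\to0$ by dominated convergence, because the thin shell $\{\lambda_*\le|\xi|^2<\lambda_*+\delta\}$ shrinks to a Lebesgue-null set. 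This gives $\widehat V_s\rightharpoonup0$, and hence $V_s\rightharpoonup0$ by Plancherel.

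\emph{Step (iv).} For fixed $R$, I will upgrade weak to local strong convergence by compactness rather than by Rellich, since $\|\nabla V_s\|_2$ is bounded by (iii) only in the limit. Choose $\chi\in C_c^\infty$ equal to one on $B_R$. The family $\chi V_s$ is bounded in $H^1$ for large $s$, and indeed $\|\nabla V_s\|_2^2\le\lambda_*+1$ eventually. By Rellich on a compact set, $\chi V_s$ is precompact in $L^2$, and every limit point is zero by (ii). Hence $\|V_s\|_{L^2(B_R)}\to0$, and since $\|V_s\|_2=1$ it follows that $\|V_s\|_{L^2(\R^3\setminus B_R)}\to1$.

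All limits are taken along $s\to\infty$; along any sequence the argument is the same.
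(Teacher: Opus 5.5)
Your proposal is correct and follows essentially the same route as the paper: the Fourier-side spectral measure, with the lower bound $\|P_sW\|_2^2\ge e^{-2s(\lambda_*+\delta/2)}\mu(A_\delta)$ and $\mu(A_\delta)>0$, drives (i), (iii) and (v); the shell-splitting Cauchy--Schwarz estimate, using that the sphere $\{|\xi|^2=\lambda_*\}$ is Lebesgue-null, gives (ii); and a uniform $H^1$ bound together with Rellich and the zero weak limit gives (iv). The only blemish is cosmetic: in (iv) you say you will avoid Rellich and then invoke it, but the cutoff argument you actually write (using $\|\nabla V_s\|_2^2\le\lambda_*+1$ for large $s$) is exactly what is needed.
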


\begin{proof}
Push forward the finite measure
\[
 |\widehat W(\xi)|^2\,\dd\xi
\]
under the map \(\xi\mapsto\lambda=|\xi|^2\), and call the resulting measure
\(\mu\).  Then
\[
 D(s):=\|P_sW\|_2^2
 =\int_{\lambda_*}^{\infty}e^{-2s\lambda}\,\dd\mu(\lambda).
\]
The normalized spectral distribution is
\[
 \dd\mu_s(\lambda)
 =
 D(s)^{-1}e^{-2s\lambda}\,\dd\mu(\lambda).
\]

Fix \(\delta>0\).  By the definition of essential infimum,
\[
 m_\delta:=
 \mu([\lambda_*,\lambda_*+\delta/2])>0.
\]
The denominator therefore satisfies
\[
 D(s)\ge
 m_\delta e^{-2s(\lambda_*+\delta/2)}.
\]
On the other hand,
\[
 \int_{\lambda_*+\delta}^{\infty}e^{-2s\lambda}\,\dd\mu(\lambda)
 \le
 e^{-2s(\lambda_*+\delta)}\mu([\lambda_*+\delta,\infty)).
\]
Dividing by the lower bound for \(D(s)\) gives
\[
 \mu_s([\lambda_*+\delta,\infty))
 \le C_\delta e^{-s\delta}\longrightarrow0.
\]
This proves (i).

For weak convergence, let \(\phi\in L^2(\R^3)\).  Set
\[
 A_\delta=
 \{\xi:\lambda_*\le|\xi|^2\le\lambda_*+\delta\}.
\]
By Plancherel and Cauchy--Schwarz,
\[
 |\langle V_s,\phi\rangle|
 \le
 \|\widehat\phi\|_{L^2(A_\delta)}
 +
 \|\phi\|_2
 \left(
 \int_{\R^3\setminus A_\delta}|\widehat V_s|^2
 \right)^{1/2}.
\]
For fixed \(\delta\), the second term tends to zero by (i).  The Lebesgue
measure of \(A_\delta\) tends to zero as \(\delta\downarrow0\), and absolute
continuity of the \(L^2\)-integral gives
\[
 \|\widehat\phi\|_{L^2(A_\delta)}\longrightarrow0.
\]
First let \(s\to\infty\), then \(\delta\downarrow0\), proving (ii).

For (iii),
\[
 \|\nabla V_s\|_2^2
 =
 \int\lambda\,\dd\mu_s(\lambda).
\]
The lower bound is \(\lambda_*\).  For an upper bound fix \(\delta>0\) and
split the integral at \(\lambda_*+\delta\).  The lower part is at most
\(\lambda_*+\delta\).  On the upper part, for all sufficiently large \(s\),
the function
\[
 \lambda\longmapsto
 \lambda e^{-s(\lambda-\lambda_*-\delta/2)}
\]
is uniformly bounded on \([\lambda_*+\delta,\infty)\).  Combining this with
the same denominator lower bound used above shows that the upper-tail first
moment tends to zero exponentially.  Hence
\[
 \lambda_*
 \le\liminf_{s\to\infty}\|\nabla V_s\|_2^2
 \le\limsup_{s\to\infty}\|\nabla V_s\|_2^2
 \le\lambda_*+\delta.
\]
Letting \(\delta\downarrow0\) proves (iii).

Thus \(V_s\) is bounded in \(H^1\) for large \(s\).  On every fixed ball
\(B_R\), Rellich compactness makes the restrictions precompact in \(L^2(B_R)\).
Every strongly convergent local subsequence must have limit zero because the
whole sequence converges weakly to zero in \(L^2(\R^3)\).  Therefore the full
sequence satisfies
\[
 \|V_s\|_{L^2(B_R)}\to0.
\]
Since \(\|V_s\|_2=1\), the exterior mass tends to one.  This is (iv).

Finally, suppose \(\lambda_*>0\).  The lower estimate for \(D(s)\) above gives
\[
 \limsup_{s\to\infty}
 \left(-\frac1{2s}\log D(s)\right)
 \le\lambda_*+\delta/2.
\]
The trivial upper bound
\[
 D(s)\le e^{-2s\lambda_*}\|W\|_2^2
\]
gives the reverse inequality in the limit.  Letting \(\delta\downarrow0\),
\[
 -\frac1{2s}\log D(s)\longrightarrow\lambda_*.
\]
Since \(D(s)=\|P_sW\|_2^2\), this is exactly (v).
\end{proof}

For a heat-orbit-null Euler state, normalization by a nonzero scalar preserves
the steady Euler equation.  Hence every \(V_s\) in
\cref{thm:spectral-edge-recession} remains in the same heat-orbit-null class.
The theorem therefore identifies a genuine noncompact alternative rather than
proving the state is zero.

\medskip
\noindent\textbf{Conditional root-tightness pincer.}
\begin{corollary}\label{cor:root-tightness-pincer}
Assume a selected high-critical Navier--Stokes sequence admits, in the fixed
retained-root coordinates, a nonzero whole-space \(L^2\) limit \(W\) such that
\(W\) is heat-orbit null.  If the limiting endpoint mechanism additionally
requires a fixed \(R<\infty\) and \(m_0>0\) for which the normalized positive
heat ages satisfy
\[
 \|V_s\|_{L^2(B_R)}\ge m_0
\]
along arbitrarily large \(s\), then that mechanism is impossible.  Therefore
one of the assumptions used to produce the fixed-root tight limit must fail,
or the normalized state must undergo an explicit macro/recentering/tail
recession.  If \(\lambda_*>0\), the same alternative is accompanied by
concentration of Fourier energy at the positive spectral edge
\(|\xi|^2=\lambda_*\).
\end{corollary}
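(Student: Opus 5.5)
The corollary is a direct consequence of \cref{thm:spectral-edge-recession}. I will argue by contradiction.

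Suppose the limiting mechanism produces a nonzero heat-orbit-null $W\in L^2(\R^3)$ in the fixed root coordinates. Suppose it also supplies $R<\infty$, $m_0>0$ and a sequence $s_k\to\infty$ with $\|V_{s_k}\|_{L^2(B_R)}\ge m_0$.

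\textbf{Step 1: apply the theorem to $W$.} Since $W\ne0$, we have $\|P_sW\|_2>0$ for every $s$, because the heat multiplier $e^{-s|\xi|^2}$ never vanishes. So $V_s$ is well defined. Part (iv) of \cref{thm:spectral-edge-recession} gives $\|V_s\|_{L^2(B_R)}\to0$ as $s\to\infty$ for this fixed $R$. Evaluating along $s_k$ contradicts the floor $m_0$. Heat-orbit nullity plays no role in the contradiction; it is used only in the remark before the corollary to see that each $V_s$ stays in the same class, so the normalization is admissible for the mechanism. The mechanism is therefore impossible.

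\textbf{Step 2: the logical reformulation.} The conclusion that some ingredient producing the tight limit must fail follows by contraposition. The ingredients are: existence of a nonzero whole-space $L^2$ limit in the retained-root chart, heat-orbit nullity, and the local floor along large heat ages. If the first two hold, then (iv) says all normalized mass leaves $B_R$: $\|V_s\|_{L^2(\R^3\setminus B_R)}\to1$. This is exactly the macro, recentering or tail recession named in the statement.

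\textbf{Step 3: the case $\lambda_*>0$.} Here I invoke parts (i) and (v) of \cref{thm:spectral-edge-recession}. Part (i) shows the normalized Fourier mass concentrates in $\{\lambda_*\le|\xi|^2<\lambda_*+\delta\}$ for every $\delta>0$. Part (v) identifies the exponential decay rate of $\|P_sW\|_2$ as $\lambda_*$. Together these give the accompanying spectral-edge concentration.

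\textbf{Main obstacle.} I expect no real analytic obstacle. The only care needed is interpretive: ``along arbitrarily large $s$'' must mean a sequence tending to infinity, so that the limit in (iv) applies. The nonzero-limit hypothesis is needed to normalize, and nothing about the Navier--Stokes passage itself is being proved.
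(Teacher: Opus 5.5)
Your proposal is correct and follows the paper's own proof: the floor $\|V_s\|_{L^2(B_R)}\ge m_0$ along a sequence $s_k\to\infty$ contradicts part (iv) of \cref{thm:spectral-edge-recession}, and the spectral-edge statement for $\lambda_*>0$ comes from parts (i) and (v). Your added observations are accurate and make explicit what the paper's two-line proof leaves implicit: that $\|P_sW\|_2>0$, so $V_s$ is well defined, and that heat-orbit nullity plays no role in the contradiction itself.
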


\begin{proof}
The asserted lower bound on the fixed ball contradicts
\cref{thm:spectral-edge-recession}(iv).  The final statement follows from
parts (i) and (v) of that theorem.
\end{proof}

The corollary is deliberately conditional.  The marked first-record theorem
of \cref{thm:marked-record} fixes the original singular spatial mark, but it
does not by itself supply the strong nonlinear compactness and accumulating
heat-chain passage needed to obtain a nonzero \(L^2\) heat-orbit-null Euler
state.  Establishing that passage, or routing its failure to the already
listed concentration, frequency, time-face, tail, or witness-loss channels,
is the remaining interface between the high-critical Navier--Stokes record
and the present spectral pincer.

\section{Limitations and open problems}
The compact endpoint realization, first-assignment estimates, terminal-class quantization, fixed-frame current detection, and rotational compactness alternatives are all conditional on the hypotheses stated in the corresponding theorems.  No autonomous pressure-triad recurrence theorem is used.  They do not exclude the nonsmooth positive-Killing-gap terminal from \cref{thm:payment-terminal}.

The heat-orbit results of \cref{sec:heat-orbit-rigidity} sharpen the compact spectral side: exact heat-orbit nullity forces cancellation separately at each interaction-energy shell, real solutions have isotropic covariance on every Fourier sphere, and normalized heat aging loses every fixed physical ball.  The conditional interface theorem \cref{thm:conditional-euler-passage} shows that localized frequency tightness and weak time compactness are sufficient for strong local passage of the amplitude-fast record to an ancient Euler profile.  It does not prove those hypotheses from the singularity record, does not produce the accumulating heat-chain relation, and does not retain a singular or deformation witness automatically.  These are singularity-specific questions; generic compactness alone cannot settle them.  Likewise, \cref{prop:rot-stress-hostile} shows that a positive rotational/Killing gap cannot be converted into productive mean Reynolds forcing by a purely algebraic coercivity estimate.

In particular, no theorem in this paper reduces every three-dimensional singular endpoint to the axisymmetric class, and no unconditional regularity conclusion is drawn.

\appendix
\section{Endpoint topology and pressure conventions}
All selected limits use the fixed pressure representative and the same normalized endpoint topology described in the main text.

\end{document}